
\documentclass[11pt]{amsart}



\usepackage[T1]{fontenc}
\usepackage[utf8]{inputenc}
\usepackage{lmodern}
\usepackage{layout}
\usepackage[american]{babel}
\usepackage[babel, final]{microtype}
\usepackage{amssymb}
\usepackage{amscd}
\usepackage[mathscr]{eucal}

\usepackage{outlines}
\usepackage{enumitem}

\usepackage{float}

\usepackage[pdftex, dvipsnames]{xcolor}
\usepackage[pdftex, final]{graphicx}
\usepackage{caption}
\usepackage{subcaption}
\usepackage{pinlabel}
\usepackage[pdftex, letterpaper, includehead, includefoot, nomarginpar, 
margin=1.1in]{geometry}

\usepackage[pdftex, final, colorlinks=true, urlcolor=NavyBlue, 
linkcolor=NavyBlue, citecolor=NavyBlue, filecolor=NavyBlue, menucolor=NavyBlue, 
bookmarks=true, bookmarksdepth=3, bookmarksnumbered=true, bookmarksopen=true, 
bookmarksopenlevel=2]{hyperref}
\hypersetup{
  pdftitle={Equivalence of Contact Gluing Maps in Sutured Floer Homology},
  pdfauthor={Ryan Leigon, Federico Salmoiraghi},
  pdfsubject={Gluing Maps},
  pdfkeywords={Heegaard Floer homology, contact geometry, bordered Floer homology, gluing maps
    57M27, 57R58}
}
\usepackage{tikz}
\usetikzlibrary{arrows, automata}
\usetikzlibrary{cd}

\usepackage[final]{showlabels} 


\setlength{\textwidth}{5.9in}
\setlength{\evensidemargin}{0.35in}
\setlength{\oddsidemargin}{0.35in}


\makeatletter
\g@addto@macro\@floatboxreset\centering
\makeatother

\allowdisplaybreaks[4]

\addto\extrasamerican{%
}
\let\fullref\autoref
\newtheorem{maintheorem}{Theorem}

\newtheorem{theorem}{Theorem}[section]
\newtheorem{corollary}{Corollary}[section]
\newtheorem{proposition}{Proposition}[section]
\newtheorem{lemma}{Lemma}[section]

\theoremstyle{definition}
\newtheorem{definition}{Definition}[section]
\newtheorem{remark}{Remark}[section]

\makeatletter
\let\c@maincorollary=\c@maintheorem
\let\c@corollary=\c@theorem
\let\c@proposition=\c@theorem
\let\c@lemma=\c@theorem
\let\c@remark=\c@theorem
\let\c@definition=\c@theorem
\let\c@example=\c@theorem
\makeatother
\def\makeautorefname#1#2{\expandafter\def\csname#1autorefname\endcsname{#2}}
\makeautorefname{maintheorem}{Theorem}%
\makeautorefname{maincorollary}{Corollary}%
\makeautorefname{theorem}{Theorem}%
\makeautorefname{corollary}{Corollary}%
\makeautorefname{proposition}{Proposition}%
\makeautorefname{lemma}{Lemma}%
\makeautorefname{remark}{Remark}%
\makeautorefname{definition}{Definition}%
\makeautorefname{example}{Example}%
\makeautorefname{equation}{equation}%





\DeclareMathOperator{\SFH}{\mathrm{SFH}}
\DeclareMathOperator{\SFC}{\mathrm{SFC}}
\DeclareMathOperator{\BSD}{\widehat{\mathrm{BSD}}}
\DeclareMathOperator{\BSA}{\widehat{\mathrm{BSA}}}

\DeclareMathOperator{\BSAA}{\widehat{\mathrm{BSAA}}}
\DeclareMathOperator{\BSDD}{\widehat{\mathrm{BSDD}}}

\DeclareMathOperator{\EH}{\mathrm{EH}}

\DeclareMathOperator{\id}{\mathrm{Id}}
\DeclareMathOperator{\INT}{\mathrm{Int}}


\newcommand{\HKM}{\mathrm{HKM}}
\newcommand{\A}{\mathcal{A}}

\newcommand{\HD}{\mathcal{H}}

\newcommand{\W}{\mathcal{W}}
\newcommand{\D}{\mathcal{D}}

\newcommand{\M}{\mathcal{M}}
\newcommand{\SN}{\mathcal{N}}

\newcommand{\SF}{\mathcal{F}}
\newcommand{\SZ}{\mathcal{Z}}

\newcommand{\TW}{\mathcal{TW}}

\newcommand{\F}{\mathbb{F}}

\newcommand{\Z}{\mathbb{Z}}

\newcommand{\balpha}{\boldsymbol{\alpha}}
\newcommand{\bbeta}{\boldsymbol{\beta}}

\newcommand{\bx}{\mathbf{x}}
\newcommand{\by}{\mathbf{y}}
\newcommand{\bz}{\mathbf{z}}

\newcommand{\BZ}{\mathbf{Z}}


\begin{document}

%
\title[Equivalence of Contact Gluing Maps in Sutured Floer Homology]{Equivalence of Contact Gluing Maps in Sutured Floer Homology}
\author{Ryan Leigon}
\address{Department of Mathematics \\ Louisiana State University \\ Baton 
  Rouge, LA 70803}
\email{\href{mailto:cleigo2@math.lsu.edu}{cleigo2@lsu.edu}}
\urladdr{\url{http://www.math.lsu.edu/~cleigo2/}}

\author{Federico Salmoiraghi}
\address{Department of Mathematics \\ The Israel Institute of Technology \\ Haifa, Israel}
\email{\href{mailto:salmoiraghi@campus.technion.ac.il}{salmoiraghi@campus.technion.ac.il}}



\begin{abstract}
We show that the contact gluing map of Honda, Kazez, and Matic has a natural algebraic description. In particular, we establish a conjecture of Zarev, that his gluing map on sutured Floer homology is equivalent to the contact gluing map.
\end{abstract}

\maketitle



\section{Introduction} 
\label{sec:intro}

The contact invariant in Heegaard Floer homology has profoundly advanced our understanding of contact geometry in dimension three. Its first iteration was introduced by Ozsv\'ath and Szab\'o \cite{OzSz3} for closed contact $3$-manifolds, while Honda, Kazez, and Mati\'c \cite{HKM2} extended it to contact $3$-manifolds $(M,\xi)$ with convex boundary. In the latter setting the contact invariant is denoted $\EH(\xi)$ and takes values in the sutured Floer homology $\SFH(-M,-\Gamma)$\footnote{Homology groups are over $\F_2$-coefficients through the entirety of this paper.} of the manifold $M$ with reversed orientation and dividing set $\Gamma$ induced by $\xi$. These and other versions of the contact invariant have proven to be extremely useful in different contexts. Ghiggini \cite{Ghi05,Ghi06,Ghi07} has used them to study the fillability of closed $3$-manifolds. Invariants commonly referred to as LOSS invariants for Legendrian and transverse knots were introduced by Lisca, Ozsv\'ath, Stipsicz, and Szab\'o \cite{LOSS01}; connections with the invariant defined by Honda, Kazez, and Mati\'c were first shown by Stipscz and Ve\'rtesi \cite{SV01}, and extended by Etnyre, Vela-Vick, and Zarev \cite{EVZ17}.

The importance of contact invariants makes understanding their behaviour under various topological operations desirable. In \cite{HKM3} Honda, Kazez, and Mati\'c constructed a map $\Phi_{\xi}$, commonly referred to as the \textit{HKM map}, associated to a proper inclusion of sutured manifolds $(M,\Gamma)\subset(M',\Gamma')$ and a compatible contact structure $\xi$ on the complement $M'\setminus \INT(M)$. When $M'\setminus \INT(M)$ has no connected components disjoint from $\partial M'$, the gluing map takes the form
\[
\Phi_{\xi}:\SFH(-M,-\Gamma)\to \SFH(-M',-\Gamma').
\]
This map preserves the contact invariant $\EH$ in the following way. If $\zeta$ is a contact structure on $(M,\Gamma)$ compatible with $\xi$, then
\[
\Phi_{\xi}(\EH(\zeta)) = \EH(\zeta\cup\xi).
\]
The $\HKM$ map is also functorial in the sense that it satisfies identity and composition laws; see Section 6 of \cite{HKM3}.

Because of these properties, HKM maps play an important role in both sutured Floer theory and contact geometry. Of note, Juh\'asz \cite{Juh16} employed them in his construction of cobordism maps on sutured Floer homology and Golla  \cite{Gol16} used HKM maps to relate the LOSS invariant for Legendrian knots in $S^3$ with the invariant defined by Honda, Kazez, and Mati\'c.
HKM maps can often track contact invariants under cut-and-paste operations. In this regard, Massot \cite{Mas01} used them to systematically produce isotopy classes of universally tight, torsion-free contact structures with vanishing Ozsv\'ath--Szab\'o contact invariant. Recently, Juh\'asz and Zemke \cite{JuZe20} gave an alternate description of the HKM map in terms of contact handles and used it to prove several results about cobordisms maps in sutured Floer homology.

Outside the realm of contact geometry, Zarev \cite{Zar11} defined a gluing operation $\cup_{\SF}$ for sutured manifolds $(M_1,\Gamma_1)$, $(M_2,\Gamma_2)$ with suitable sutured subsurfaces $\SF\subset\partial M_1$, $\overline{\SF}\subset\partial M_2$. He also defined an associated gluing map which takes the form of a pairing 
\[
\Psi_{\SF}:\SFH(M_1,\Gamma_1)\otimes\SFH(M_2,\Gamma_2)\to\SFH((M_1,\Gamma_1) \cup_{\SF} (M_2,\Gamma_2).
\]

If $M_1$ is equipped with a contact structure $\xi$ compatible with $\Gamma_1$, then we can define a map which is formally similar to the $\HKM$ map
\[
	\Psi_\xi:\SFH(-M_2,-\Gamma_2) \to \SFH((-M_1,-\Gamma_1) \cup_{-\SF} (-M_2,-\Gamma_2))
\]
by $\Psi_\xi(\by) = \Psi_{-\SF}(\EH(\xi)\otimes\by)$. We will refer to $\Psi_{\xi}$ as the \textit{bordered contact gluing map}, since Zarev used bordered sutured theory to define the map $\Psi_{\SF}$.

Due to its algebraic nature, it not clear a priori whether this map deserves the moniker of contact gluing map. Indeed, it is not apparent from the definition that $\Psi_{\xi}$ sends contact invariants to contact invariants. Despite this, Zarev claimed that the $\HKM$ and bordered contact gluing maps are equivalent. The main result of this paper affirms this conjecture; we give a rough formulation here and the precise statement in \fullref{thm:main duplicate}.

\begin{maintheorem}
\label{thm:main}

Suppose that $\Phi_{\xi}$ is the $\HKM$ map for  a proper inclusion $(M,\Gamma)\subset(M',\Gamma')$ of sutured manifolds with no isolated components and compatible contact structure $\xi$. There is a sutured contact manifold $(M'',\Gamma'',\xi'')$ and graded isomorphisms $f,g$ such that $(M',\Gamma') \cong  (M'',\Gamma'')\cup_{\SF}(M,\Gamma)$, the contact structures $\xi$ and $\xi''$ have equivalent contact handle decompositions, and the following diagram commutes.
\begin{center}
\begin{tikzcd}
\SFH(-M,-\Gamma) \arrow[r, "f" ] \arrow[d, "\Phi_{\xi}"]
&\SFH(-M,-\Gamma)\arrow[d, "\Psi_{\xi''}"]\\
\SFH(-M',-\Gamma') \arrow[r, "g" ]
& \SFH((-M'',-\Gamma'') \cup_{-\SF} (-M,-\Gamma))
\end{tikzcd}
\end{center}

\end{maintheorem}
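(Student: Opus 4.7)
The plan is to prove the theorem by reducing both maps to their action on a contact handle decomposition of $\xi$, and then verifying agreement on each elementary piece. By the work of Juh\'asz and Zemke, the HKM map $\Phi_{\xi}$ factors as a composition of elementary maps, one for each contact handle of index $0,1,2,$ or $3$ in a chosen contact handle decomposition of $(M'\setminus\INT(M),\xi)$. The first step is to establish a parallel decomposition for the bordered contact gluing map: because Zarev's pairing $\Psi_{\SF}$ is associative with respect to partial gluings, an iterated contact handle attachment on the $\xi$-side corresponds to an iterated sequence of pairings $\Psi_{\SF_i}(\EH(h_i)\otimes-)$, where $h_i$ is the $i$-th contact handle. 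This reduction converts the problem into a handle-by-handle comparison.

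Assuming this reduction, for contact $0$- and $3$-handles both constructions yield essentially tautological maps, and agreement is immediate. The substantial content lies in indices $1$ and $2$. Here I would explicitly compute the bordered sutured invariant $\BSD$ (or $\BSA$) of the elementary cobordism given by a single contact $1$- or $2$-handle, presenting the handle by a convex Heegaard diagram or partial open book from which $\EH$ of the handle can be read off directly. The key verification is that Zarev's $A_\infty$-tensor product of a generator in $\SFH(-M,-\Gamma)$ with this bordered sutured invariant of the handle reproduces the local diagrammatic modification prescribed by the HKM map for that handle type.

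The main obstacle will be this local computation of the bordered contact handle invariant. Each contact handle is topologically simple, but translating the contact framing and dividing-set data into the combinatorial input required by Zarev's machinery---identifying the correct arc diagram, generators, and idempotents---demands careful bookkeeping, especially for $1$- and $2$-handles whose attaching regions interact nontrivially with the sutures. Once the invariant of each handle type is pinned down and matched with the corresponding HKM move, the isomorphisms $f$ and $g$ appear naturally: the sutured manifold $(M'',\Gamma'')$ is assembled by taking the union of contact handle neighborhoods, $\xi''$ is the contact structure obtained by applying the same sequence of attachments to a standard neighborhood (so the handle decompositions of $\xi$ and $\xi''$ are equivalent by construction), and $f$, $g$ are the $\SFH$ maps induced by the ambient identification of $M'\setminus\INT(M)$ with the gluing region $M''$. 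Functoriality of both constructions then closes the square by composing the elementary agreements.
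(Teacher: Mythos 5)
Your overall strategy is the same as the paper's: decompose $\xi$ into contact handles, identify the HKM and bordered maps handle-by-handle via an explicit local computation with Zarev's pairing (this is exactly the content of \fullref{lem:padded handles decomposition}, \fullref{lem:one handle maps}, and \fullref{lem:two handle maps}), and then close the square by composition laws on both sides. However, your reduction step has a genuine gap. Associativity of $\Psi_{\SF}$ by itself does not convert the iterated handle attachments into a factorization of $\Psi_{\xi''}$: to pass from the iterated pairings $\Psi_{\SF_i}(\EH(h_i)\otimes -)$ to the single pairing $\Psi_{\SF}(\EH(\xi'')\otimes -)$ you must know that the bordered gluing map sends $\EH\otimes\EH$ to the $\EH$ class of the glued contact structure, and this is precisely the property that is \emph{not} apparent from Zarev's algebraic definition --- it is one of the main things being proved. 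In the paper this $\EH$-preservation (\fullref{lem:zarev preserves eh}) and the composition law (\fullref{lem:zarev composition}) are themselves deduced from the handle-level identifications, so the logical order matters: the local computations must come first and cannot be quoted as consequences of a decomposition that already presupposes $\EH$-preservation.

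Two further points are glossed over. First, the handles cannot in general be attached along disjoint sutured surfaces (a $2$-handle attaching curve may run over a $1$-handle), so one cannot directly express $(M'',\Gamma'')$ as a bordered gluing of handle neighborhoods; the paper introduces punctured padded handles precisely to separate the attaching regions, and $(M'',\Gamma'',\xi'')$ is built from these padded pieces, not from bare handle neighborhoods. Second, the identity law needed to cancel the padding layers (gluing the $I$-invariant piece is an isomorphism, via the identification of the diagonal class with $\EH$ in \fullref{lem:contact diagonal}) only holds when the dividing set on the gluing surface is disk-decomposable --- it fails for general dividing sets --- so this hypothesis must be arranged by the construction (\fullref{lem:rank one div sets}, \fullref{lem:padding independence}), not assumed. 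Finally, a minor point: under the no-isolated-components hypothesis only contact $1$- and $2$-handles occur, and this is not merely a convenience --- $0$- and $3$-handles would require gluing along degenerate (closed) sutured surfaces, which Zarev's framework as used here does not allow, so the claim that those cases are ``tautological'' in the bordered setting does not apply.
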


The strategy for attacking this theorem is as follows. We first describe the $\HKM$ maps for contact handle attachments in terms of certain diagrammatic maps for contact handles. We then identify these diagrammatic maps with bordered contact gluing maps for contact handle attachments. Next, we prove that an arbitrary bordered contact gluing map is a composition of such handle attachment maps; along the way, we must show that $\Psi_{\xi}$ maps contact invariants to contact invariants and satisfies an appropriate identity law. Combining all these results with the functorial properties of the $\HKM$ map yields \fullref{thm:main}.

\begin{remark}
\label{rmk:isolated components}
Note that the hypothesis of \fullref{thm:main} requires that $M'\setminus int(M)$ have no components disjoint from $\partial M'$ (no ``isolated components''); this ensures that $\xi$ can be decomposed using only contact 1-and 2-handles.
\end{remark}

\begin{remark}
\label{rmk:derived category}
It is natural to ask whether the graded isomorphisms in \fullref{thm:main} can be chosen to be identity maps. As of the writing of this paper, the map $\Psi_{\SF}$ is only known to be well-defined up to maps induced by graded homotopy equivalence, since it is defined using bordered sutured theory. A stronger version of \fullref{thm:main} would require naturality of bordered sutured Floer homology.
\end{remark}

\fullref{thm:main} has several interesting consequences. First, we obtain a bordered version of the $\HKM$ map.

\begin{corollary}
\label{cor:bordered hkm}
Let $\M=(M,\gamma,\SF,\SZ)$ and $\M'=(M',\gamma',\SF,\SZ)$ be bordered sutured manifolds with $M\subset M'$ and $M'\setminus int(M)$ a sutured manifold equipped with a compatible contact structure $\xi$. Then there exists a map of type-D structures induced by $\xi$
\[
	\Phi_\xi: \BSD(-\M)\to\BSD(-\M'),
\]
which is natural with respect to gluing bordered sutured manifolds along $\SF$.
\end{corollary}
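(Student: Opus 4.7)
The plan is to lift the $\HKM$ map to type-D structures, building on the handle-by-handle description developed in the proof of \fullref{thm:main}. Since that theorem identifies $\Phi_\xi$ with the bordered contact gluing map $\Psi_{\xi''}$, and the latter is constructed via Zarev's pairing operation (which at the level of bordered sutured modules is a box tensor product $\boxtimes$), the corollary essentially amounts to expressing this pairing before passing to homology.

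First, by \fullref{rmk:isolated components}, I would decompose $\xi$ on $M'\setminus\INT(M)$ into a sequence of contact $1$- and $2$-handles $h_1,\ldots,h_k$. Each attachment is supported away from $\SF$ and may be realized as a bordered sutured cobordism admitting a type-$DA$ bimodule $P_i$ representing the bordered contact gluing map of $h_i$. Composing these pairings via the box tensor product yields a candidate type-D morphism
\[
\Phi_\xi \;=\; P_k\boxtimes\cdots\boxtimes P_1\boxtimes(\,\cdot\,) \;:\; \BSD(-\M)\to\BSD(-\M').
\]

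Second, I would verify independence of the handle decomposition. Any two contact handle decompositions of $\xi$ are related by local moves (handle slides, creation and cancellation of canceling pairs, reorderings), and \fullref{thm:main} applied to each such move shows that the resulting maps on $\SFH$ agree. The handle-level construction used in proving the main theorem upgrades these equalities to explicit homotopies of type-D morphisms, so $\Phi_\xi$ is well-defined up to homotopy of type-D structures, consistent with the caveat of \fullref{rmk:derived category}.

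Naturality under gluing along $\SF$ is then essentially formal. Given a bordered sutured manifold $\SN$ carrying a type-A structure over the algebra of $-\SF$, Zarev's pairing theorem identifies $\BSA(\SN)\boxtimes\BSD(-\M)$ with the bordered sutured invariant of the glued manifold. Since each $P_i$ is supported away from $\SF$, tensoring with $\BSA(\SN)$ commutes with applying $\Phi_\xi$, giving the required naturality. The main obstacle is the well-definedness step: one must upgrade the $\SFH$-level equality provided by \fullref{thm:main} to genuine chain-level homotopies for handle-decomposition moves, which forces a careful handle-by-handle reading of the main theorem's proof rather than a direct appeal to its statement.
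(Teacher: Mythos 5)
Your route is not the paper's, and the step you yourself flag as ``the main obstacle'' is a genuine gap rather than a routine verification. You propose to build $\Phi_\xi$ handle by handle, as a composition $P_k\boxtimes\cdots\boxtimes P_1$ of type-DA bimodules for the individual contact handles, and then to prove independence of the handle decomposition by upgrading the $\SFH$-level equalities of \fullref{thm:main} to chain-level homotopies. Two problems: first, \fullref{thm:main} only gives equality of maps on homology up to graded isomorphism, and by \fullref{rmk:derived category} and \fullref{rmk:zarev naturality} the bordered sutured invariants used here are not known to be natural, so there is no mechanism in this paper for promoting those equalities to homotopies between your bimodule compositions for two different handle decompositions (nor is a uniqueness statement for contact handle decompositions up to slides/cancellations/reorderings established or used here). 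Second, the intermediate DA bimodules $P_i$ ``representing the bordered contact gluing map of $h_i$'' are never constructed in this framework: the handle maps $\Psi_i$ are only identified with the diagrammatic maps $\sigma_i$ after pairing and passing to homology (\fullref{lem:one handle maps}, \fullref{lem:two handle maps}), which is strictly weaker than what your construction needs.

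The paper's proof avoids any chain-level handle decomposition. As in \fullref{thm:main}, one writes $\M'\cong(M'',\Gamma'')\cup_{\SF'}\M$ where the dividing set on $\SF'$ is disk-decomposable with no closed components, hence elementary for a suitable arc diagram $\SZ'$, so the cap module $W'$ has a single generator $w$. Zarev's algebraic join $\nabla_{W'}\colon W'\otimes(W')^{\vee}\to(A')^{\vee}$ is well defined up to $\A_\infty$-homotopy, and $\phi_\xi$ is defined in one stroke as the evaluation of $\id_{U'}\boxtimes\nabla_{W'}\boxtimes\id_{V'}$ on a cycle $\bx\boxtimes w$ representing $\EH(\xi'')$; well-definedness up to homotopy of type-D morphisms then follows from Zarev's result together with the fact that box tensoring with $\id_{V'}$ is an $\A_\infty$-functor, with no comparison of handle decompositions needed. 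Naturality with respect to gluing $\SN$ along $\SF$ is the purely formal observation that $\id_{V'}\boxtimes\id_{X}$ is homotopic to $\id_{V'\boxtimes X}$ for $X=\BSD(\HD_N)$, and \fullref{thm:main} is invoked only once, at the very end, to identify the induced map on homology with the $\HKM$ map $\Phi_\xi$. If you want to pursue your approach, the missing ingredient is precisely a bimodule-level invariance statement for contact handle attachments, which the results of this paper do not supply.
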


Similar statements can be formulated for type-A structures and the various bimodule structures involved in the bordered sutured theory.  Of note, Etnyre, Vela-Vick, and Zarev \cite{EVZ17} used this corollary as a fundamental tool to prove their alternate characterization of the minus version of knot Floer homology. As Mathews \cite{Mat01} points out, this corollary also has interesting implications for the relationship between sutured Floer homology and the contact category defined by Honda and Tian \cite{HoTi01}.

\fullref{thm:main} also provides an alternate proof of Juhasz and Zemke's description of contact gluing maps for contact handle attachments found in \cite{JuZe20}. Honda, Kazez, and Mati\'c's original description of their gluing map required Heegaard diagrams to satisfy a number of technical conditions, collectively referred to as contact-compatibility. Even for contact handle attachments, the use of contact-compatible Heegaard diagrams often renders the associated HKM gluing map uncomputable on the level of chain complexes. As a result, most prior applications relied only on formal properties of the HKM maps. However, there are a priori diagram-dependent maps $\sigma_i$ associated to contact handle attachments which are simple to compute at the level of chain complexes; see \fullref{sec:hkm handle attachments}. It is possible to describe a given $\HKM$ map in terms of diagrammatic maps; see \fullref{lem:padded handles decomposition}. It turns out that these diagrammatic maps are well-defined up to graded isomorphism. This means that an arbitrary diagrammatic map induces the $\HKM$ up to graded ismorphism.

\begin{corollary}
\label{cor:simple maps}
The diagrammatic maps $\sigma_i$ are well-defined up to graded isomorphism. In particular, any diagrammatic map induces the corresponding $\HKM$ map up to graded isomorphism.
\end{corollary}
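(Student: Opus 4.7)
The plan is to leverage \fullref{thm:main} together with \fullref{lem:padded handles decomposition} to promote a statement about HKM maps into the desired statement about diagrammatic maps. The heart of the argument is that the HKM map, while well-defined by Honda--Kazez--Mati\'c's original work, is in practice computed on a very restrictive class of contact-compatible diagrams; the diagrammatic maps $\sigma_i$, by contrast, are chain-level maps defined on any Heegaard diagram adapted to the contact handle, and the ambiguity in $\sigma_i$ is precisely the ambiguity in this diagrammatic choice.

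Concretely, I would fix a contact handle attachment and two different Heegaard diagrammatic presentations yielding diagrammatic maps $\sigma_i$ and $\sigma_i'$. By \fullref{lem:padded handles decomposition}, each of these arises as a piece of a decomposition of the single, well-defined HKM map $\Phi_\xi$ for the handle attachment. Applying \fullref{thm:main} to this handle attachment, the HKM map $\Phi_\xi$ fits into a commutative square with the bordered contact gluing map $\Psi_{\xi''}$, with vertical identifications given by graded isomorphisms $f$ and $g$. Since the bordered contact gluing map $\Psi_{\xi''}$ depends only on the contact handle datum (not on the diagrammatic choice used to compute $\sigma_i$ or $\sigma_i'$), chasing around the square for each of the two diagrammatic choices shows that $\sigma_i$ and $\sigma_i'$ differ by graded isomorphisms. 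This establishes the first assertion of the corollary.

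For the second assertion, observe that once the diagrammatic maps $\sigma_i$ are well-defined up to graded isomorphism, \fullref{lem:padded handles decomposition} together with the functoriality of the HKM map directly give that any diagrammatic map (for any choice of diagram) induces the HKM map up to graded isomorphism: the HKM map is a fixed composition of the $\sigma_i$, each of which can be freely replaced by any diagrammatic representative at the cost only of graded isomorphisms pre- and post-composed with the composition.

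The main obstacle I foresee is bookkeeping, rather than any conceptual issue: one needs to verify that the graded isomorphisms $f$ and $g$ produced by \fullref{thm:main} are compatible with the decomposition of $\Phi_\xi$ into diagrammatic maps in \fullref{lem:padded handles decomposition}, so that the square chase above actually isolates a single handle attachment map. This amounts to checking that the identifications arising from changes of Heegaard diagram and from bordered pairing are natural with respect to the handle-by-handle decomposition; this should follow from the identity and composition laws for HKM maps recalled in the introduction together with the analogous formal properties of $\Psi_\SF$ that are established along the way to \fullref{thm:main}.
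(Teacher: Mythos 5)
There is a genuine gap, and it sits at the very first step of your argument. You claim that ``by \fullref{lem:padded handles decomposition}, each of these [two diagrammatic maps $\sigma_i$, $\sigma_i'$] arises as a piece of a decomposition of the single, well-defined HKM map $\Phi_\xi$.'' But \fullref{lem:padded handles decomposition} is an existence statement: it produces \emph{one} distinguished contact-compatible diagram $\HD$, \emph{one} contact-compatible extension, and \emph{one} diagrammatic attachment on which $\sigma_i\circ\phi_{\xi_\Gamma}$ induces $\Phi_i$. It says nothing about a diagrammatic map taken on an \emph{arbitrary} admissible diagram, and the whole content of the corollary is precisely that arbitrary diagrammatic choices give the same map. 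So the lemma cannot be used to place both $\sigma_i$ and $\sigma_i'$ inside decompositions of $\Phi_\xi$, and the subsequent square chase through \fullref{thm:main} never closes this hole: the commutative square in \fullref{thm:main} involves only $\Phi_\xi$ and $\Psi_{\xi''}$, neither of which is a priori connected to a diagrammatic map on a given, arbitrary diagram. Without that connection the chase does not ``isolate a single handle attachment map''---it only re-derives the relation between two maps that are already known to be well-defined.

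The missing ingredient is exactly what the paper uses in its own proof: \fullref{lem:one handle maps} and \fullref{lem:two handle maps}, which show that for \emph{any} admissible diagram $\HD$ the induced map $(\sigma_i)_*$ agrees, up to graded isomorphism, with the bordered contact gluing map $\Psi_i$. Since $\Psi_{-\SF}$ (hence $\Psi_i$) is well-defined independently of the diagram, this immediately gives $(\sigma_i)_* = (\sigma_i')_*$ up to graded isomorphism, and \fullref{lem:padded handles decomposition} then identifies this common map with the HKM map $\Phi_i$. Your proposal inverts the logical flow---it tries to transport well-definedness of $\Phi_\xi$ (or $\Psi_{\xi''}$) down to the $\sigma_i$, whereas the diagram-independence of the $\sigma_i$ must first be established by identifying each of them, on its own diagram, with the diagram-independent bordered map. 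If you incorporate \fullref{lem:one handle maps} and \fullref{lem:two handle maps} at the start, the rest of your outline (including the second assertion about recovering the HKM map) goes through and essentially becomes the paper's proof; as written, the argument does not.
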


Finally, we use \fullref{cor:simple maps} to provide an algorithm for computing the $\HKM$ map combinatorially following Plamenevskaya's work in \cite{Pla07}.

\begin{corollary}
\label{cor:combinatorial hkm}
Given a proper inclusion of sutured manifolds $(M,\Gamma)\subset(M',\Gamma')$ with no isolated components and compatible contact structure $\xi$ on $M'\setminus int(M)$, there are nice Heegaard diagrams $\HD,\HD'$ for $(-M,-\Gamma),(-M',-\Gamma')$ respectively and an inclusion of complexes $\SFC(\HD)\to\SFC(\HD')$  which can be used to compute the $\HKM$ map $\Phi_{\xi}:\SFH(-M,-\Gamma)\to\SFH(-M',-\Gamma')$ up to graded isomorphism.
\end{corollary}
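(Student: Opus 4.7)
The plan is to assemble the combinatorial description of $\Phi_\xi$ out of the handle-by-handle diagrammatic maps $\sigma_i$ and then niceify in the style of Sarkar--Wang and Plamenevskaya. Since $M'\setminus\INT(M)$ has no isolated components, the compatible contact structure $\xi$ admits a contact handle decomposition built solely from $1$- and $2$-handles. Via \fullref{lem:padded handles decomposition} (the padded-handles decomposition referenced in the introduction), this gives a factorization $\Phi_\xi = \Phi_{\xi_n}\circ\cdots\circ\Phi_{\xi_1}$ of HKM maps through the intermediate sutured manifolds obtained after each handle attachment.

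By \fullref{cor:simple maps}, each factor $\Phi_{\xi_i}$ is induced, up to graded isomorphism, by the diagrammatic map $\sigma_i$ on any Heegaard diagram adapted to that handle attachment. Crucially, each $\sigma_i$ is an \emph{inclusion} of chain complexes at the diagrammatic level: a contact $1$-handle attachment introduces a new $\alpha\beta$-pair meeting in a unique intersection point, and $\sigma_i$ sends $\bx\mapsto \bx\times\{\text{pt}\}$; a contact $2$-handle attachment is, after an appropriate isotopy, realized by a diagram in which $\sigma_i$ again acts as an inclusion paired with a distinguished intersection point. Composing these inclusions yields a diagrammatic inclusion $\SFC(\HD_0)\hookrightarrow\SFC(\HD_n)$, where $\HD_0$ is a diagram for $(-M,-\Gamma)$ and $\HD_n$ is a diagram for $(-M',-\Gamma')$ containing $\HD_0$ as a sub-diagram.

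The next step is to niceify. Following Sarkar--Wang, as adapted to the sutured setting by Juh\'asz and implemented for contact-handle diagrams by Plamenevskaya in \cite{Pla07}, I would perform a sequence of $\alpha$-curve isotopies and finger moves on $\HD_n$ to make every region not touching the suture either a bigon or a rectangle. Since $\HD_0$ sits as a sub-diagram of $\HD_n$, the isotopies can be carried out in the complement of $\HD_0$, or chosen to restrict to a valid niceification of $\HD_0$; the resulting pair $(\HD,\HD')$ of nice diagrams still satisfies $\HD\subset\HD'$, and the composed inclusion descends to an inclusion $\SFC(\HD)\hookrightarrow\SFC(\HD')$ whose differential counts only empty embedded bigons and rectangles. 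Because niceification is realized by isotopy diffeomorphisms that induce graded isomorphisms on $\SFH$, the homology-level map agrees with $\Phi_\xi$ up to graded isomorphism, by \fullref{cor:simple maps}.

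The main obstacle I anticipate is the compatibility of the niceification with the sub-diagram inclusion: one must ensure that the Sarkar--Wang finger moves performed on $\HD_n$ can be arranged to preserve (or independently niceify) the sub-diagram $\HD_0$, so that the composed $\sigma_i$ remains an honest chain-level inclusion at the end. Handling this compatibility, together with bookkeeping for basepoints introduced by contact handles, is what Plamenevskaya's arguments in \cite{Pla07} were designed to accomplish; I would adapt those arguments directly, checking that the sutured and bordered-sutured features present here cause no additional obstruction.
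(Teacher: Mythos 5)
Your overall strategy---factor $\Phi_{\xi}$ into diagrammatic handle maps via \fullref{cor:simple maps} and then niceify in the style of Plamenevskaya---is the same as the paper's, but the step you defer as ``the main obstacle'' is precisely where the content of the corollary lies, and the way you propose to dispose of it does not work as stated. The finger moves in the Sarkar--Wang algorithm cannot be ``carried out in the complement of $\HD_0$'': the diagram for $(-M,-\Gamma)$ is arbitrary, so the bad regions typically lie inside it and the fingers must sweep through that part of the surface. What must actually be arranged is the opposite containment: no finger move may enter the strips created by the diagrammatic $2$-handle attachments (equivalently, one never handleslides over the new $\beta_0$-curves), because such a move would destroy the identification of the preferred intersections. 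The paper proves this is achievable by noting that a finger entering a $2$-handle strip either misses the new $\alpha_0$-curve or must terminate on it, since $\alpha_0$ abuts a basepoint region, and in either case the finger can be pushed back out of the strip; see \fullref{fig:plam2}. Moreover, Plamenevskaya's own mechanism---ruling out finger moves around \emph{all} full $\beta$-curves---does not carry over to the sutured setting; here one can only rule out moves over the $2$-handle $\beta$-curves, and one must check separately, via the local triangle count in the strip (\fullref{fig:plam1}), that handleslides and finger moves over the old $\beta$-curves still carry preferred intersections to preferred intersections. None of this is supplied by ``adapting Plamenevskaya directly.''

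Your justification of the final commutative square is also not adequate: the maps induced by the niceification moves are holomorphic-triangle (continuation) maps, not maps induced by isotopy diffeomorphisms, and knowing that they are graded isomorphisms does not give the required identity $\psi_{\HD_2,\HD_2^{nice}}(\by,\bx_0)=(\psi_{\HD_0,\HD^{nice}}(\by),\bx_0^{nice})$. That identity is exactly what the confinement of the finger moves to the complement of the handle strips buys: the triangle count restricted to generators of the form $(\by,\bx_0)$ then coincides with the full triangle count for the small diagrams. Relatedly, the paper does not niceify the small and large diagrams independently and then check compatibility; it niceifies the glued-up diagram (after first attaching $1$-handles to a nice diagram, which preserves niceness, and $2$-handles, which do not) and obtains the nice diagram for $(-M,-\Gamma)$ \emph{backwards}, by deleting the handle strips from $\HD_2^{nice}$, which makes the compatibility automatic. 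If you replace your niceification step with this argument, the rest of your outline goes through.
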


\subsection*{Organization}
\label{sec:organization}

In \fullref{sec:hkm handle attachments}, we introduce the diagrammatic maps and relate them to the corresponding $\HKM$ maps; see \fullref{lem:padded handles decomposition}. In \fullref{sec:the join map}, we review the bordered sutured theory that goes into Zarev's gluing map $\Psi_{\SF}$ on sutured Floer homology. We define the associated contact gluing map $\Psi_{\xi}$ in \fullref{sec:the bordered contact gluing map} and prove that it satisfies the composition law \fullref{lem:zarev composition}. \fullref{sec:computing bordered} reviews more background on making concrete computations with the bordered gluing map and using the formula in \fullref{lem:elementary join} in particular. We relate the diagrammatic maps with the bordered contact gluing map in \fullref{sec:proof of the main theorem} by \fullref{lem:one handle maps} and \fullref{lem:two handle maps} and use them to prove two versions of \fullref{thm:main}. We discuss corollaries and applications in \fullref{sec:applications}.

\subsection*{Acknowledgment}
\label{sec:acknowledgment}
The authors would sincerely like to express their gratitude to their advisor Shea Vela-Vick for introducing them to the subject, for suggesting the present problem, and for his patient support throughout the development of the project. Without his guidance this work would not have been possible. They are also indebted to Mike Wong for his invaluable encouragement and suggestions for improving various versions of the manuscript. His wisdom has contributed to the quality of the exposition and the spirits of the authors.  Finally, the authors would like to thank  John Etnyre, Ko Honda, and Robert Lipshitz, for the illuminating conversations.

\section{HKM Handle Attachments} 
\label{sec:hkm handle attachments}

\begin{figure}
\includegraphics[width=1.0\textwidth]{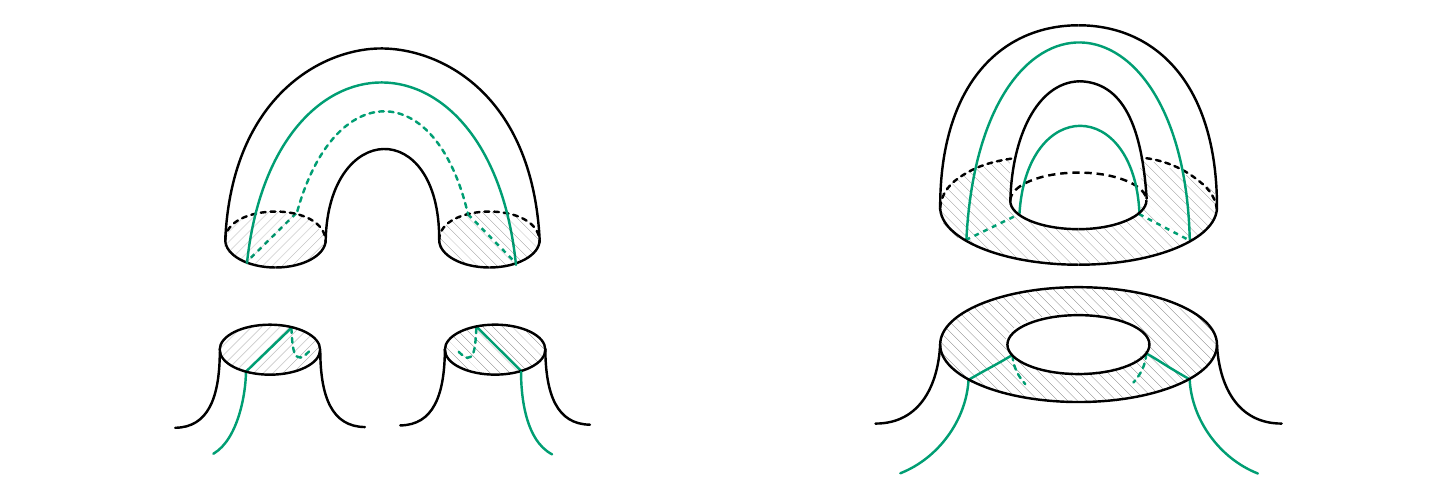}
  
  \caption{Contact $1$- and $2$-handle attachments}
	\label{fig:contact handles intro}
\end{figure}

The goal of this section is to describe the HKM maps associated to contact $i$-handle attachments in terms of the diagram-dependent maps $\sigma_i$. The ideas described here are familiar to experts, though we record them for the sake of clarity and self-containment.

\begin{remark}
Note that the discussion in this section parallels many ideas in \cite{JuZe20}. In particular, our padded contact handles are analogous their Morse-type contact handles and \fullref{lem:padded handles decomposition} is analogous to their Theorem 5.8.
\end{remark}

We assume familiarity with some basic ideas in contact geometry, particularly in convex surface theory. Throughout this paper, the contact structures we consider are defined up to isotopy. As such, we make implicit use of Giroux's uniqueness and flexibility theorems; see \cite{Gir01}. We also assume familiarity with Heegaard diagrams arising from partial open books, though we review some key features in \fullref{sec:hkm handles} to clarify notation. We refer to such diagrams as \textit{partial open book diagrams}; we will represent a partial open book diagram as in \cite{HKM3} by the quotient of a surface $S$ with embedded $\beta$-arcs and $\alpha$-arcs. The full Heegaard surface $\Sigma$ is formed by identifying pairs of intervals in $\partial S$, which we shade black, so that each arc becomes a closed curve after identification\footnote{While $\Sigma$ is usually defined by gluing a copy of a subsurface $Q\subset S$ to $S$ along $\partial Q\cap\partial S$, the condition that the $\alpha$-arcs in $S$ form a basis, guarantees that the diagram $(S\cup Q,\bbeta,\balpha)$ is diffeomorphic to $(S/\sim,\bbeta,\balpha)$, where $\sim$ is the relation which identifies pairs of intervals in $\partial S$.}. All our pictures represent diagrams $(\Sigma,\balpha,\bbeta)$ for $(M,-\Gamma)$ with the positive orientation of $\Sigma$ as depicted in \fullref{fig:Partial Open Book}. We reverse the roles of $\alpha$-curves and $\beta$-curves, and we think of these as diagrams $(\Sigma,\bbeta,\balpha)$ for $(-M,-\Gamma)$, as is standard when working with partial open books.

\subsection{Diagrammatic maps for handles and bypasses}
\label{sec:simple maps}

\begin{figure}[H]
\labellist
	\begin{footnotesize}
	\pinlabel $p$ at 120 50
	\pinlabel $q$ at 120 30
	\end{footnotesize}
	\begin{large}
	\pinlabel $\longrightarrow$ at 210 40
	\end{large}
\endlabellist
\includegraphics[width=1.0\textwidth]{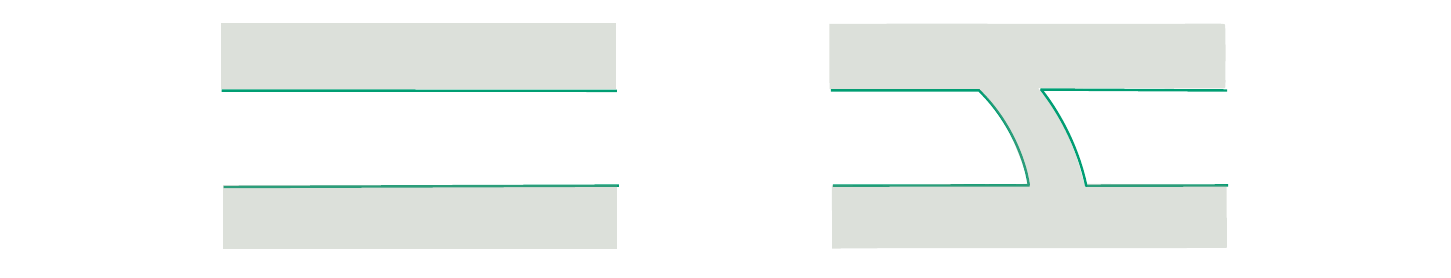}
  
  \caption{A diagrammatic contact 1-handle attachment.}
	\label{fig:one handle attachment}
\end{figure}

\begin{figure} [H]
\labellist
	\begin{footnotesize}
	\pinlabel $p$ at 120 58
	\pinlabel $q$ at 120 0
	\pinlabel $x_0$ at 300 58
	\pinlabel $x_0$ at 300 0
	\end{footnotesize}
	\begin{large}
	\pinlabel $\longrightarrow$ at 210 30
	\end{large}
\endlabellist
\includegraphics[width=1.0\textwidth]{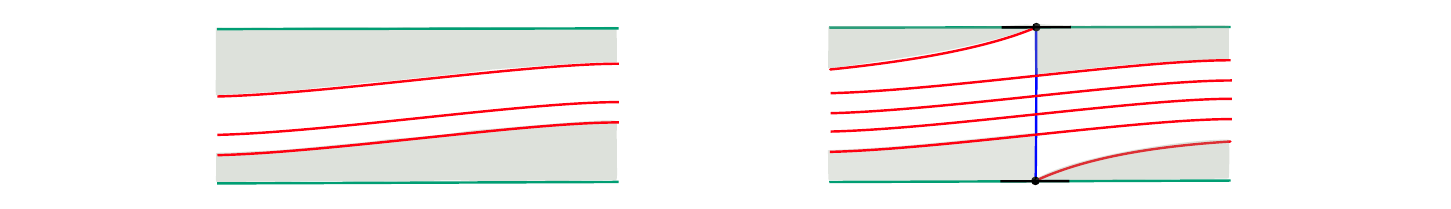}
  \caption{A diagrammatic contact 2-handle attachment.}
	\label{fig:two handle attachment}
\end{figure}

We need to define diagrammatic maps associated to contact handle attachments in order to relate the $\HKM$ and bordered contact gluing maps. We refer to the work of Giroux \cite{Gir01} and Ozbagci \cite{Ozb11} for the notion of contact handles in dimension three. Contact 1-and 2-handles are shown in \fullref{fig:contact handles intro} along with their dividing sets. Denote the result of attaching a contact $i$-handle $h^i$ to $(M,\Gamma,\xi)$ by $(M_i,\Gamma_i,\xi_i)$. Note too that we can attach a contact handle to a balanced sutured manifold $(M,\Gamma)$. Identify a neighborhood of $\partial M$ with $\partial M\times [-1,0]$ and let $\xi_{\Gamma}$ be the $[-1,0]$-invariant contact structure compatible with $\Gamma$. Then we can attach $h^i$ to $\partial M = \partial M\times \{0\}$, and the induced dividing set will make the result a balanced sutured manifold, denoted $(M_i,\Gamma_i)$.

Let $\HD = (\Sigma,\bbeta,\balpha)$ be a diagram for $(-M,-\Gamma)$. Recall that we recover $(-M,-\Gamma)$ from $\HD$ by attaching 2-handles along $(\bbeta\times\{0\})\sqcup(\balpha\times\{1\})$ to $(\Sigma\times[0,1],\partial\Sigma\times\{\frac12\})$. With this identification, a contact 1-handle attachment along $(\{p\}\times\{\frac12\})\sqcup (\{q\}\times\{\frac12\})\subset \Gamma$, corresponds to attaching a 2-dimensional 1-handle to $\Sigma$ along $ \{p,q\}$ to obtain a new surface $\Sigma_1$. Then $\HD_1 = (\Sigma_1,\bbeta,\balpha)$ is a diagram for $(-M_1,-\Gamma_1)$; see \fullref{fig:one handle attachment}. A contact $2$-handle attached along a curve $\delta$ with $\delta\cap\Gamma = \{p\}\times\{\frac12\}\cup \{q\}\times\{\frac12\}$ effects the following change at the level of diagrams. Attach a 2-dimensional 1-handle to $\Sigma$ along $ \{p,q\}$ to obtain a new surface $\Sigma_2$; call the core of the 1-handle $\lambda$. Define arcs $b = \delta\cap R_+(\Gamma)$ and $a= \delta\cap R_-(\Gamma)$, and let  $\beta_0 = b\cup\lambda$, $\alpha_0 = a\cup\lambda$. Perturb these curves so that $\beta_0$ and $\alpha_0$ intersect a single time positively in the 1-handle. Then $\HD_2 = (\Sigma_2,\bbeta\cup\beta_0,\balpha\cup\alpha_0)$ is a diagram for $(-M_2,-\Gamma_2)$; see \fullref{fig:two handle attachment}.

We refer to these operations on diagrams as $\textit{diagrammatic contact handle attachments}$. Furthermore, if $\HD$ is a partial open book diagram compatible with $\xi$, then the result of a diagrammatic handle attachment $\HD_i$ is a partial open book diagram compatible with $\xi_i = \xi\cup h^i$; see the proof of \fullref{prop:diagrammatic extension} or \cite{HKM2} for a more detailed discussion. Additionally, if $\bx$ is the canonical collection of intersection points representing $\EH(\xi)$, then $\bx$ represents $\EH(\xi_1)$ and $(\bx,x_0)$ represents $\EH(\xi_2)$. In the case of the 2-handle, call $x_0$ the \textit{preferred intersection point} of $\HD_2$; this parallels the terminology in \fullref{sec:hkm handles}.

In general, a diagrammatic handle attachment induces an obvious inclusion of the associated sutured Floer complexes.

\begin{definition}
\label{def:simple maps}
Let $\HD$ be a Heegaard diagram for $(-M,-\Gamma)$. Let $\HD_i$ be a diagram for $(-M_i,-\Gamma_i)$ obtained from $\HD$ by a diagrammatic contact $i$-handle attachment. For $i = 1$, the associated \textit{diagrammatic map} is the map $\sigma_1: \SFC(\HD) \to \SFC(\HD_1)$ given by $\sigma_1(\by) = (\by)$. For $i=2$, the associated \textit{diagrammatic map} is the map $\sigma_2 : \SFC(\HD) \to \SFC(\HD_2)$ given by $\sigma_2(\by) = (\by, x_0)$.
\end{definition}

Note that these maps depend on a choice of diagram, and it is not clear a priori whether or not they induce well-defined maps on $\SFH$. We will show in the course of this paper that the diagrammatic maps for $i$-handle attachments are well-defined up to graded homotopy equivalence and induce maps on homology which are equivalent to the $\HKM$ contact gluing maps for the corresponding handle attachments; see \fullref{cor:simple maps}. Note that Juh\'asz and Zemke have shown in \cite{JuZe20} that this is the case, though our proof is independent from theirs.

\begin{figure}[h]
\labellist
	\begin{footnotesize}
	\pinlabel $x_0$ at 295 75
	\pinlabel $x_0$ at 295 19
	\end{footnotesize}
\endlabellist
\includegraphics[width=1.0\textwidth]{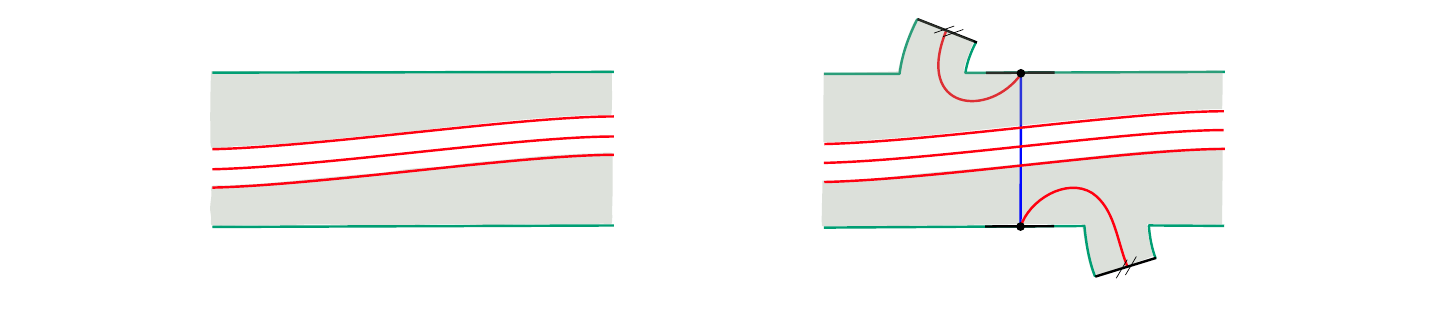}
  \caption{A diagrammatic positive trivial bypass.}
  \label{fig:positive bypass}
\end{figure}

\begin{figure}[h]
\labellist
	\begin{footnotesize}
	\pinlabel $x_0$ at 268 48
	\pinlabel $x_0$ at 318 48
	\end{footnotesize}
\endlabellist
\includegraphics[width=1.0\textwidth]{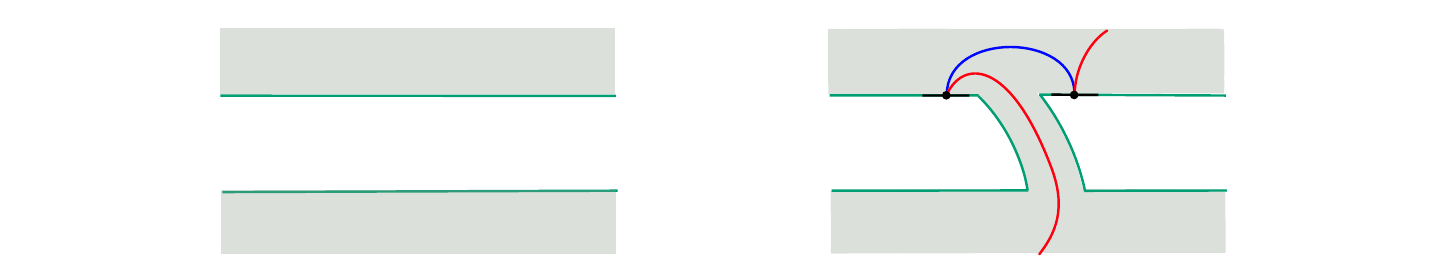}
  \caption{A diagrammatic negative trivial bypass.}
  \label{fig:negative bypass}
\end{figure}

In the course of identifying these diagrammatic maps with the $\HKM$ map, we will also need maps associated to trivial bypasses. Attaching a bypass can be viewed as attaching a contact 1-handle followed by a contact 2-handle; as such, trivial bypasses have associated diagrammatic maps.  A \textit{diagrammatic positive trivial bypass attachment} is depicted in \fullref{fig:positive bypass} and a \textit{diagrammatic negative trivial bypass attachment} is depicted in \fullref{fig:negative bypass}.

\begin{definition}
\label{def:simple bypass maps}
Let $\HD$ be a Heegaard diagram for $(-M,-\Gamma)$, and let $\HD_{12}$ be a diagram for $(-M,-\Gamma)$ obtained from $\HD$ by a diagrammatic trivial bypass attachment. The associated \textit{diagrammatic map} is the map $\tau:\SFC(\HD) \to \SFC(\HD_{12})$ given by $\tau(\by) = (\by,x_0)$.
\end{definition}
We will need the following standard computation; a stronger version can be found in \cite{HKM3}. 

\begin{lemma}
\label{lem:trivial bypass maps}
The map $\tau:\SFC(\HD) \to \SFC(\HD_{12})$ associated to a diagrammatic trivial bypass is a graded isomorphism of complexes.

\end{lemma}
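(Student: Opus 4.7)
The plan is to analyze the local picture near the new curves introduced by the trivial bypass, showing that $\tau$ is a bijection on generators, a chain map, and respects gradings, from which the conclusion follows directly.

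First I would examine the diagrams in \fullref{fig:positive bypass} and \fullref{fig:negative bypass} carefully. Since a trivial bypass is a contact $1$-handle followed by a contact $2$-handle, the effect on the diagram is to attach a $2$-dimensional $1$-handle to $\Sigma$ and then introduce a new pair of curves $\alpha_0, \beta_0$ running over the core of that $1$-handle. The key local claim, visible from the figures, is that after the standard perturbation $\alpha_0\cap\beta_0 = \{x_0\}$, and neither $\alpha_0$ nor $\beta_0$ meets any of the original $\balpha$ or $\bbeta$ curves. This immediately gives a bijection between generators: any generator of $\SFC(\HD_{12})$ must match $\alpha_0$ with $\beta_0$ at $x_0$, and the remaining matching is a generator of $\SFC(\HD)$. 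Hence $\by\mapsto(\by,x_0)$ is a bijection on the underlying $\F_2$-vector spaces.

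Next I would verify that $\tau$ is a chain map. Given $\phi\in\pi_2((\by,x_0),(\by',x_0))$ with positive domain contributing to the differential, I would examine the multiplicities of $\phi$ in the four small regions around $x_0$ inside the $1$-handle region. Because $x_0$ is the unique point of $\alpha_0\cap(\bbeta\cup\beta_0)$ and of $\beta_0\cap(\balpha\cup\alpha_0)$, and because $\phi$ begins and ends at $x_0$, the local multiplicity conditions at $x_0$ force these four multiplicities to all vanish. This in turn forces the domain of $\phi$ to vanish on every region of $\Sigma_{12}$ abutting $\alpha_0$ or $\beta_0$, so the domain of $\phi$ lies entirely in $\Sigma\subset\Sigma_{12}$ and corresponds to a disk in $\HD$ from $\by$ to $\by'$. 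Conversely, any such disk lifts trivially. This yields $\partial(\by,x_0) = (\partial\by,x_0)$, so $\tau$ is a chain isomorphism.

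For the graded statement, I would note that since the bypass is trivial the sutured manifold $(-M,-\Gamma)$ is unchanged, so there is a canonical identification of $\spinc$-structures under which $\tau$ preserves the $\spinc$ decomposition generator by generator. Relative Maslov gradings are preserved by the same multiplicity argument applied to Whitney disks of arbitrary Maslov index. The absolute grading is then fixed by the usual normalization, or alternatively invoked from the stronger statement in \cite{HKM3} that the induced map on $\SFH$ is the canonical isomorphism.

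The main potential obstacle is the local picture verification, namely confirming that $\alpha_0$ and $\beta_0$ as drawn in \fullref{fig:positive bypass} and \fullref{fig:negative bypass} can be isotoped so that they meet only at $x_0$ and do not introduce intersections with the other curves of the diagram. This is a direct but slightly delicate check with the partial open book data for a trivial bypass, and once confirmed, the bijection, chain map, and grading arguments are all routine local multiplicity computations of the type standard in Heegaard Floer stabilization arguments.
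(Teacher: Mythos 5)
Your overall strategy (bijection on generators, correspondence of disks, gradings) is the same as the paper's, but your key local claim is too strong, and it is exactly the point you flagged as the ``main potential obstacle'' that fails. You assert that neither $\alpha_0$ nor $\beta_0$ meets any of the original $\balpha$ or $\bbeta$ curves, and your chain-map step uses both halves of this (``$x_0$ is the unique point of $\alpha_0\cap(\bbeta\cup\beta_0)$ \emph{and} of $\beta_0\cap(\balpha\cup\alpha_0)$''). The paper claims, deliberately asymmetrically, only half of this: for a positive trivial bypass only the new $\alpha$-curve is ``clean'' (its sole intersection point is $x_0$), and for a negative one only the new $\beta$-curve is. The other new curve will in general cross original curves of the opposite type, because the attaching arc of a trivial bypass need not be contained in a collar of $\partial\Sigma$: the arc of the $2$-handle circle lying in $R_+$ (resp.\ $R_-$) can be long, and its image in the Heegaard surface, while disjoint from the $\bbeta$ (resp.\ $\balpha$) curves, can cross many $\balpha$ (resp.\ $\bbeta$) curves. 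This generality is not optional for the paper: the trivial bypasses used in \fullref{lem:padded handles decomposition} and in the proof of \fullref{lem:two handle maps} are attached along arcs approximating arbitrary attaching curves (see \fullref{fig:pregluing bypass1}), so the non-clean new curve really does run through the original diagram. Consequently your disjointness check cannot be completed, and the multiplicity argument as you wrote it (vanishing of the four quadrants at $x_0$ forcing the domain off every region abutting $\alpha_0$ or $\beta_0$) is not justified; note also that at a non-moving coordinate the corner conditions alone do not force the quadrant multiplicities to vanish — you need the adjacent suture/basepoint regions.

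The repair brings you back to the paper's argument. With only one clean curve, the generator bijection still holds: every generator of $\SFC(\HD_{12})$ must occupy the clean curve, hence must contain $x_0$, which also occupies its partner, so any extra intersection points on the non-clean curve can never be used, and the remaining points form a generator of $\SFC(\HD)$. For the differentials, since both generators of a relevant Whitney disk contain $x_0$ and no other point on the new curves, the boundary of a domain along each new curve runs from $x_0$ to $x_0$, hence is either trivial or a full multiple of that curve; positivity together with the fact that the regions along the new strips (and on both sides of the clean curve) abut suture regions of multiplicity zero rules out the nontrivial case, so domains are supported in $\Sigma\subset\Sigma_{12}$ and disks correspond exactly, as the paper asserts. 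Your grading discussion is fine once this is in place.
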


\begin{proof}
For a positive bypass, $x_0$ is the only intersection point in the new $\alpha$-curve; see \fullref{fig:positive bypass}. For a negative bypass, $x_0$ is the only intersection point in the new $\beta$-curve; see \fullref{fig:negative bypass}. In either case, the map $\tau$ thus gives a one-to-one correspondence between generators of $\HD$ and generators of $\HD_{12}$. Furthermore, holomorphic disks in $\HD_{12}$ correspond exactly to holomorphic disks in $\HD$.
\end{proof}

\subsection{HKM maps for contact handle attachments.}
\label{sec:hkm handles}
 
Since $\HKM$ maps are only defined for proper inclusions $(M,\Gamma) \subset (M',\Gamma')$ of sutured manifolds, diagrammatic handle attachments do not obviously fit into the $\HKM$ framework. We will introduce proper inclusions associated to contact handle attachments which allow us to relate diagrammatic and $\HKM$ contact handle attachment maps.

\begin{definition}
\label{def:3-dimensional padded contact handle}
Given a sutured manifold $(M,\Gamma)$, let the \textit{padding} be the sutured contact manifold $(P=\partial M \times [0,1],-\Gamma \sqcup \Gamma,\xi_{\Gamma})$, where $-\Gamma\sqcup\Gamma = -\Gamma\times\{0\}\sqcup\Gamma\times\{1\} $ and $\xi_{\Gamma}$ is the $[0,1]$-invariant contact structure compatible with $-\Gamma \sqcup \Gamma$. A \textit{padded contact $i$-handle} $(P_i,-\Gamma\sqcup\Gamma_i,\xi_i)$ is the result of attaching a contact $i$-handle to $\partial M\times\{1\}$ in $(P,-\Gamma \sqcup \Gamma,\xi_{\Gamma})$.
\end{definition} 
We will often omit the contact structure $\xi_i$ from the notation and write $(P_i,-\Gamma\sqcup\Gamma_i)$ for a padded $i$-handle. The result of attaching a padded contact handle to $(M,\Gamma)$ is diffeomorphic to $(M_i,\Gamma_i)$. Moreover, the inclusion $M\subset M\cup P_i$ has an associated HKM map $\Phi_i:\SFH(-M,-\Gamma)\to\SFH(-M_i,-\Gamma_i)$. 

\begin{figure}[H]
\includegraphics[width=1.0\textwidth]{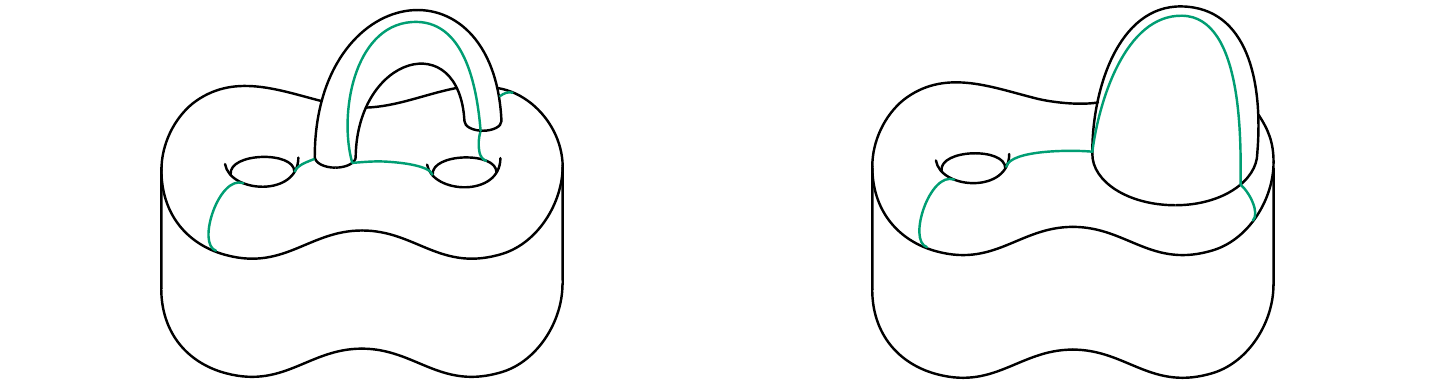}
\caption{Padded contact handles.}
\label{fig:Padded contact handles}
\end{figure}

Before identifying diagrammatic and $\HKM$ handle attachment maps, we will need to review the pertinent features of the definition of the $\HKM$ map. To give context and set notation, we first recall how partial open books are constructed following \cite{HKM2}; see also \cite{EtOz11}. Here and throughout this paper, $N(L)$ denotes a small neighborhood of a submanifold $L$. If $L$ is a Legendrian graph, $N(L)$ is a standard neighborhood unless otherwise indicated.

Given a sutured contact manifold $(M,\Gamma,\xi)$, a partial open book decomposition is determined by a graph $K\subset M$ satisfying the conditions of Theorem 1.1 of \cite{HKM2}. This guarantees that $M\setminus N(K)$ is product disk decomposable so that we can write $M\setminus N(K) = H_1 = (S\times [-1,1])/_\sim$ and $N(K) = H_2 = (Q\times [1,2])/_\sim$, where $Q\subset S$ and the relations $\sim$ collapse each interval of the form $\{s\}\times[-1,1], s\in \partial S$ and $\{q\}\times [1,2], q\in \partial Q\cap\partial S$ to a single point; the respective dividing sets are the images of $\partial S\times [-1,1]$ and $\partial Q\times[1,2]$ under $\sim$. This decomposition determines a monodromy map $h:Q\times\{2\}\to S\times\{-1\}$ with $h|_{Q\cap\partial S} = \id_{Q\cap\partial S}$ so that $(M,\Gamma)$ is diffeomorphic to $H_1\cup_{Q\times\{1\}} H_2/_{\sim h}$.

The Heegaard surface for an associated partial open book diagram is the image of $-S\times \{-1\}\cup Q\times \{1\}$ in the quotient. Curves are determined by a choice of arcs $\{a_1,\ldots,a_m\}$ in $Q$ which  form a \textit{basis} for $(S\setminus Q, R_+(\Gamma))$ in the sense that $(S\times\{1\})\setminus\cup a_k$ deformation retracts to $R_+(\Gamma)$. For each $k$, let $b_k$ be a perturbation of $a_k$ such that $\partial b_k$ is $\partial a_k$ moved in the $\partial S$ direction and $b_k\cap  a_k$ is a single intersection point. The sets of full curves $\balpha,\bbeta$ are given by the images of the $a_k\times\{1\}\cup a_k\times\{-1\}$ and $b_k\times\{1\}\cup h(b_k)\times\{-1\}$ respectively.

Observe that the image of $\partial \Sigma=((\partial Q\setminus \partial S) \times \{1\}) \cup (-(\partial S \setminus \partial Q) \times \{-1\})$ corresponds to $-\Gamma$ in the quotient, so that the diagram $(\Sigma,\balpha,\bbeta)$ is a balanced Heegaard diagram for $(M,-\Gamma)$. Then $(\Sigma,\bbeta,\balpha)$ is a diagram for $(-M,-\Gamma)$; we call any such diagram constructed this way a \textit{partial open book diagram}.

\begin{figure}[H]
\labellist
	\begin{footnotesize}
	\pinlabel $R_-(\Gamma)$ at 305 115
	
	\pinlabel $S\times \{-1\}$ at 217 68
	\pinlabel $S\times \{1\}$ at 150 36
	\pinlabel $Q\times \{2\}$ at 75 3
	\pinlabel $h$ at 100 125
	\end{footnotesize}
		
\endlabellist
\includegraphics[width=1.0\textwidth]{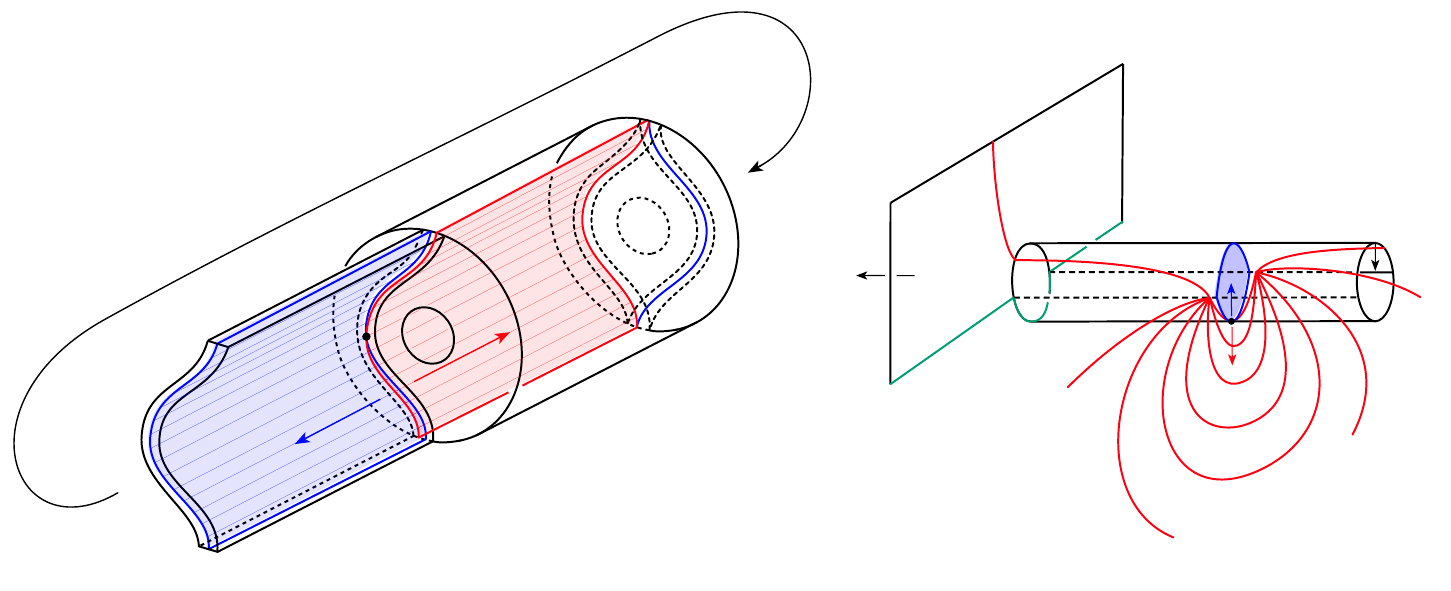}
\caption{On the left a partial open book $M=(S\times [-1,1]\cup Q\times [1,2])/_\sim$. On the right the Heegaard surface embedded in $(M,\Gamma)$; the black arrows represent the positive orientation of $\Sigma$.}
\label{fig:Partial Open Book}
\end{figure}

When defining the $\HKM$ map, one uses diagrams $\HD = (\Sigma,\bbeta,\balpha)$ for $(-M,-\Gamma)$ and $\HD' = (\Sigma'\supset\Sigma,\bbeta\cup\bbeta'',\balpha\cup\balpha'')$ for $(-M',-\Gamma')$ which satisfy a number of technical conditions collectively referred to as \textit{contact-compatibility}; in this context, $\HD$ is called a \textit{contact-compatible diagram} and $\HD'$ is called a \textit{contact-compatible extension} of $\HD$. The idea is that the these diagrams will mimic partial open book diagrams to a large degree, with some necessary adaptations. We will need to understand some of these conditions, which we now describe, following section 4 of \cite{HKM3}. Note that there are some notational differences; the roles of objects decorated by single primes and undecorated objects are reversed, and we use $Q$ instead of $P$.

Parametrize a neighborhood of $\partial M\subset M'$ by $\partial M\times [-1,1]$, where $\partial M = \partial M\times\{0\}$ and $V = \partial M\times [-1,0]$ is contained in $M$. Note that the sutures $\Gamma$ determine a $[-1,0]$-invariant contact structure $\zeta$ on $V$. The Heegaard splitting for a contact-compatible diagram of $(-M,-\Gamma)$ is determined by a choice of an embedded graph $K$ whose restriction to $V$ satisfies the conditions in Theorem 1.1 of \cite{HKM2} with respect to $\zeta$. (The conditions in Theorem 1.1 of \cite{HKM2} guarantee that a Legendrian graph determines a compatible partial open book decomposition.) In particular, $K|_V$ is Legendrian and the handlebodies $V\setminus N(K)$ and $N(K)|_V$ are each product disk decomposable with respect to the sutures induced by $\zeta$.

The essential choices made in constructing a contact-compatible extension of $\HD$ are as follows. Let $W = \partial M\times [0,1]$, and note that $\xi$ can be perturbed to restrict to the $[0,1]$-invariant contact structure on $W$. The extension is determined by a Legendrian graph $K_W\cup K''\subset M'\setminus \INT(M)$, with $K_W\subset W$ and $K''\subset M'' = M'\setminus \INT(M\cup W)$. (If one compares with \cite{HKM3}, a standard neighborhood of our $K_W$ is the complement of a standard neighborhood of their $K'''$.) Both $K_W$ and $K''$ are required to satisfy the conditions of Theorem 1.1 of \cite{HKM2} on $W$ and $M''$ with respect to $\xi$.
This splits $M'$ into handlebodies $H_1=M\setminus N(K \cup K_W \cup K'')$  and $H_2=N(K \cup K_W\cup K'')$; however, $H_1$ and $H_2$ need not be product disk decomposable, since $K$ is not necessarily Legendrian for some contact structure on $M\setminus V$. If we restrict our attention to $V \cup W \cup M''$, we have an honest contact manifold with contact structure $\zeta \cup \xi$. The restrictions $J_1$ and $J_2$ of the handlebodies $H_1$ and $H_2$ to $V\cup W\cup M''$ are product disk decomposable and can be written as $J_1=S' \times [-1,1]/_\sim$ and $J_2=Q' \times [1,2]/_\sim$, with $Q' \subset S'$ and $\sim$ the relation which collapses each interval $\{s'\}\times [-1,1]$, $s'\in\partial S'$ and $\{q'\}\times [-1,1]$, $q'\in\partial Q'$ to a point, just as for an ordinary partial open book decomposition.

Define the full Heegaard surface $\Sigma'$ to be the complement of $R_{+}(\Gamma')$ in $\partial H_1$. More explicitly, $\Sigma'$ is given by $\partial (M\setminus (N(K)\cup V)) \cup (S'\times \{-1\} \cup Q' \times \{1\})$. The curves in $\bbeta'$ and $\balpha'$ are defined to be the curves in $\bbeta$ and $\balpha$ together with additional curves $\bbeta'',\balpha''$ that are generated in the following way. Chose a collection of arcs $\{a_1'',...,a_m''\}$ on $Q'\times\{1\}$ which form a {\it basis} for $(S'\setminus Q,R_+(\Gamma'))$ in the sense that $(S'\setminus Q)\setminus\cup_{k=1}^m a_k''$ deformation retracts to  $R_+(\Gamma')$; here, $S'$ is identified with $S' \times \{1\}$, while $Q$ is the restriction of $Q' \times\{1\}$ to $V\subset M$.
For each $k$, let $b_k''$ be a perturbation of $a_k''$ such that $\partial b_k''$ is $\partial a_k''$ moved in the $\partial S'$--direction and $b_k''\cap a_k''$ is a single intersection point, as is standard procedure when constructing a partial open book diagram. Call the collection of these intersection points the \textit{preferred intersections} of the extension, and denote it by $\bx''$. The arcs $\{b_k''\},\{a_k''\}$ are completed to full sets of curves $\bbeta'' ,\balpha''$ in $\HD'$ using the monodromy map $h:Q'\times\{2\}\to S'\times \{-1\}$ obtained from the identification of $V\cup W\cup M''$ with  $J_1\cup_h J_2$. More explicitly, $\beta_i''$ and $\alpha_i''$ are the respective images of $b_i''\times\{1\}\cup h(b_i''\times\{2\})$ and $a_i''\times\{1\}\cup a_i''\times\{-1\}$ under the relation $\sim$.

\subsection{Identifying diagrammatic and HKM maps}
\label{sec:diagrammatic hkm}
In this section we identify the diagrammatic and $\HKM$ contact handle attachment maps. We will show that a diagrammatic handle attachment can be chosen to respect contact-compatibility and the preferred intersections.

\begin{figure}[H]
\labellist
	\begin{footnotesize}
	\pinlabel $R_-(\Gamma')$ at 153 96
	\pinlabel $K''$ at 235 60
	\pinlabel $K''$ at 20 60
	\pinlabel $R_-(\Gamma_1')$ at 400 102
	\end{footnotesize}
		
\endlabellist
\includegraphics[width=1.0\textwidth]{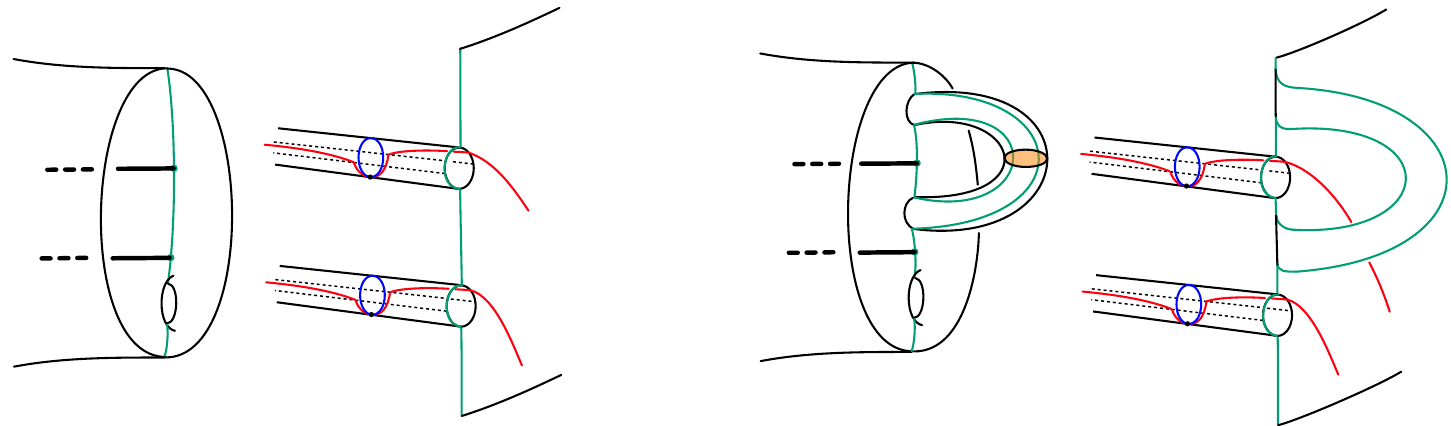}
\caption{Left: $(M',\Gamma')$ and $\HD'$; Right: $(M'_1,\Gamma'_1)$ and $\HD'_1$}
\label{fig:Contact compatible 1-handles}
\end{figure}

\begin{figure}[H]
\labellist
	\begin{footnotesize}
	\pinlabel $L$ at 100 100
	\pinlabel $K''$ at 173 68
	\pinlabel $K''$ at 25 68
	\pinlabel $L$ at 250 100
	\pinlabel $R_-(\Gamma_2')$ at 400 70
	\end{footnotesize}
		
\endlabellist
\includegraphics[width=1.0\textwidth]{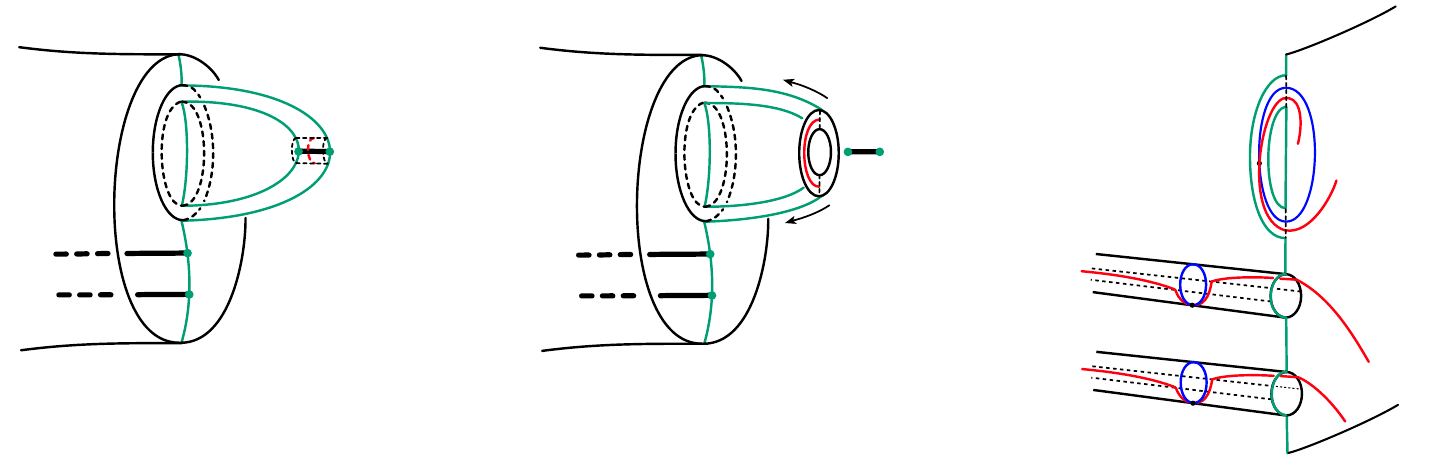}
\caption{Left: $(M'_2,\Gamma'_2)$; Center: $M'_2\setminus N(L)$; Right: $\HD'_2$ }
\label{fig:Contact compatible 2-handles}
\end{figure}

\begin{proposition}
\label{prop:diagrammatic extension}
Let $\HD = (\Sigma,\bbeta,\balpha)$ be a contact-compatible diagram for $(-M,-\Gamma)$ and $\HD' = (\Sigma',\bbeta\cup\bbeta'',\balpha\cup\balpha'')$ be a contact compatible extension of $\HD$ for $(-M',-\Gamma')$. Let $(M'_i,\Gamma'_i)$ be the result of attaching a contact $i$-handle to $(M',\Gamma')$. Then there is a contact-compatible extension $\HD'_i$ of $\HD$ for $(-M'_i,-\Gamma'_i)$ which is obtained from $\HD'$ by a diagrammatic handle attachment.
\end{proposition}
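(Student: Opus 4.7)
The plan is to construct $\HD'_i$ by extending the Legendrian graph that defines the partial open book decomposition underlying $\HD'$ across the newly attached contact handle, and then to verify that the resulting partial open book diagram coincides with the diagrammatic $i$-handle attachment of $\HD'$ described in \fullref{def:simple maps}. I will treat the two cases separately.

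For $i=1$, I would take the Legendrian graph defining the extension for $\HD'_1$ to be $K''$ itself, without adding any new Legendrian arc. The contact $1$-handle $h^1$ has a single dividing arc and is product disk decomposable as a single product region; its only contribution to the partial open book decomposition of $V\cup W\cup M''\cup h^1$ is to enlarge the page $S'$ by attaching a $2$-dimensional $1$-handle at the two points of $\Gamma'$ where $h^1$ is glued, while $Q'$ and the basis $\{a_k''\}$ are unaffected. Thus the Heegaard surface becomes $\Sigma'$ with an attached $2$-dimensional $1$-handle and no new full curves, matching the diagrammatic $1$-handle attachment.

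For $i=2$, I would extend $K''$ by a Legendrian arc $L$ running through $h^2$ with endpoints on $K''$, chosen as in \fullref{fig:Contact compatible 2-handles}. Removing a standard neighborhood $N(L)$ from $h^2$ splits the handle into a product region absorbed into $J_1$, which attaches a $2$-dimensional $1$-handle to $S'$ at the feet of $L$, and a product region enlarging $J_2$ by an additional strip of $Q'$ containing the co-core of that new $1$-handle. I would take the added basis arc $a_0''$ to be this co-core, so that its perturbation $b_0''$ meets $a_0''$ in a single point, which I identify with the preferred intersection $x_0$. Completing $a_0''$ and $b_0''$ to full curves via the monodromy then yields the curves $\alpha_0$ and $\beta_0$ of \fullref{def:simple maps}, realizing $\HD'_2$ as the diagrammatic $2$-handle attachment.

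The main obstacle is showing that the extended Legendrian graph $K''\cup L$ satisfies the hypotheses of Theorem 1.1 of \cite{HKM2} on $M''\cup h^2$, so that it actually determines a compatible partial open book decomposition. To address this I would invoke Giroux flexibility together with the standard model of a contact $2$-handle to isotope $L$ into a position where $h^2\setminus N(L)$ is manifestly a product disk region that glues compatibly to $M''\setminus N(K'')$. The $1$-handle case is comparatively routine since no new Legendrian arc is needed. A final check is that the enlarged collection of arcs still forms a basis for $(S'_i\setminus Q'_i, R_+(\Gamma'_i))$, which follows from the observation that attaching a $2$-dimensional $1$-handle to the page together with an arc through its co-core preserves the required deformation retraction onto $R_+$.
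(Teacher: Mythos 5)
Your $i=1$ case is essentially the paper's argument: keep $K''$ unchanged, observe that the handle only enlarges the page by a $2$-dimensional $1$-handle, and note that the basis and curves are untouched. The paper makes the one nontrivial verification explicit (that $K''$ still satisfies Theorem 1.1 of \cite{HKM2} after the attachment, because a system of compressing disks for $M''_1\setminus N(K'')$ is obtained by adding the cocore of $h^1$), but your ``single product region'' justification is the same idea, so this half is fine.

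The $i=2$ case has a genuine gap, and it starts with the choice of $L$. You take $L$ to run through $h^2$ \emph{with endpoints on $K''$}, while the construction you cite (and \fullref{fig:Contact compatible 2-handles}) takes $L$ to be a Legendrian approximation of the cocore of $h^2$ with $\partial L\subset \Gamma'_2$, disjoint from $K''$. This is not a cosmetic difference. First, with the cocore choice the hypotheses of Theorem 1.1 of \cite{HKM2} for $K''\cup L$ are immediate, because $M''_2\setminus N(K''\cup L)$ is contactomorphic to $M''\setminus N(K'')$ (a handle minus a neighborhood of its cocore is a trivial collar over the attaching region); with your choice this is exactly the point that needs proof, and ``invoke Giroux flexibility together with the standard model'' is not an argument -- you would have to show directly that the complement of $N(K''\cup L)$ is still product disk decomposable. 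Second, and more seriously, your choice does not reproduce a diagrammatic $2$-handle attachment in the sense of \fullref{def:simple maps}. If $\partial L\subset K''$, then $N(K''\cup L)$ gains a $3$-dimensional $1$-handle, so $Q'$ gains a band, and (as you describe it) $S'$ gains a $2$-dimensional $1$-handle; since the Heegaard surface contains copies of both $S'\times\{-1\}$ and $Q'\times\{1\}$, this produces \emph{two} new strips, with the new $\alpha$-curve $a_0''\times\{1\}\cup a_0''\times\{-1\}$ running through both of them. The diagrammatic attachment adds exactly one strip, attached along $\delta\cap\Gamma'$, with $\beta_0$ and $\alpha_0$ meeting once inside it. In the intended construction $S'_2$ is diffeomorphic to $S'$ (nothing is attached to the page), the single new strip arises as the new component $\partial N(L)\setminus\partial h^2$ of $Q'_2$, and the new basis arc is the meridian-type arc $R_+(\partial N(L)\setminus\partial h^2)\cap\mu$ on that component, not the co-core of a $1$-handle of $S'$. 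As written, your construction at best produces a stabilized variant of $\HD'_2$, which does not prove the proposition as stated; to fix it, replace $L$ by the Legendrian cocore with $\partial L\subset\Gamma'_2$ and rerun your basis argument for the resulting new component of $Q'_2$.
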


\begin{proof}
The graphs involved in the definition of a contact-capatible extension are all required to satisfy a number of conditions; however, $K''$ need only
\begin{enumerate}
\item satisfy the conditions of Theorem 1.1 of \cite{HKM2} with respect to $(M'',\xi|_{M''})$
\item have each component intersect $\Gamma'$ at least twice
\item have $\partial K''$ disjoint from $\partial K_W$.
\end{enumerate}
Attaching a contact handle to $\partial M'$ only alters $K''$ and the choice of basis arcs, so it suffices to verify that there are modifications of $K''$ and $\{a_k''\}$ which satisfy the properties describe above. The reader may verify that no other conditions in contact-compatibility are affected, and may be safely swept under the rug.

Given a 1-handle attachment $(M'_1,\Gamma'_1)$, the feet of the 1-handle can be isotoped to be disjoint from $\partial K''$, since $K''$ meets $\partial M'$ in a finite number of points by Theorem 1.1 of \cite{HKM2}. The graph $K''\subset M'\subset M'_1$ still satisfies the conditions of Theorem 1.1 of \cite{HKM2} with respect to $\xi_1 = \xi\cup h^1$, since one can cut $M''_1$ along the cocore of $h^1$ to recover $M''$, i.e. a system of compressing disks for $M_1''\setminus N(K'')$ can be obtained by adding the cocore of $h^1$ to a system of compressing disks for $M''\setminus N(K'')$. The new Heegaard surface is obtained from $\Sigma'$ by attaching a strip at the attaching sphere of $h^1$, since that is the corresponding change on $S'$; see \fullref{fig:Contact compatible 1-handles}. The basis $\{a_k''\}_{k=1}^m$ for $\HD'$ is still a basis for the new extension. Since the monodromy is unaffected, the curves also remain the same. This new extension is thus exactly a diagrammatic 1-handle attachment $\HD'_1$. 

Given a 2-handle attachment $(M'_2,\Gamma'_2)$ with attaching curve $\delta$, the attaching region can be isotoped to be disjoint from $K''$, since $K''$ meets $\partial M'$ in a finite number of points by Theorem 1.1 of \cite{HKM2}. Let $L$ be a Legendrian approximation of the cocore of $h^2$ with $\partial L\subset \Gamma'_2$. The graph $K''\cup L$ satisfies the conditions of Theorem 1.1 of \cite{HKM2} with respect to the contact structure $\xi_2$ obtained by attaching the contact 2-handle $h^2$ to $\xi$, since $M''_2\setminus N(K''\cup L)$ is contactomorphic to $M''\setminus N(K'')$. Note that $K''\cup L$ also satisfies the other two conditions listed above. Let $S'_2$ play the role of $S'$ for the new extension. While $S'_2$ is diffeomorphic to $S'$, there is an additional strip in $\Sigma'_2$ due to the new component $\partial N(L)\setminus\partial h^2$ of $Q'_2$ coming from $L$; see \fullref{fig:Contact compatible 2-handles}. This strip does not deformation retract to a subset of $R_+(\Gamma'_2)$, so the arcs $\{a''_k\}_{k=1}^m$ no longer form a basis.
Let $\mu$ be a meridian of $\partial N(L)\setminus\partial h^2$, and define $a_0''$ to be the arc $R_+(\partial N(L)\setminus\partial h^2)\cap \mu$. The collection $\{a_k''\}_{k=0}^m$ is now a basis for the new extension. 

Since the arc $b_0''$ is completed to a full $\beta$-curve using the core of the new strip attached along $\delta\cap\Gamma'$, this new extension is exactly a diagrammatic 2-handle attachment $\HD'_2$.
\end{proof}

\begin{proposition}
\label{prop:diagrammatic preferred intersection}
Let $\HD$, $\HD'$, and $\HD'_i$ be as above, and $\phi_{\xi}:\SFC(\HD)\to\SFC(\HD')$ be the map $\by\to(\by,\bx'')$ which induces the $\HKM$ map as described above. Then the map $\sigma_i\circ\phi_{\xi}:\SFC(\HD)\to\SFC(\HD'_i)$ induces the $\HKM$ map $\Phi_{\xi_i}:\SFH(-M,-\Gamma)\to\SFH(-M'_i,-\Gamma'_i)$.
\end{proposition}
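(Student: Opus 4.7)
The plan is to verify directly from the constructions in Proposition~\ref{prop:diagrammatic extension} that, at the level of chain generators, the composition $\sigma_i \circ \phi_\xi$ agrees with the chain map $\phi_{\xi_i} : \by \mapsto (\by, \bx_i'')$ whose induced map on homology defines $\Phi_{\xi_i}$. Once this identification is established, the proposition follows immediately since the $\HKM$ map on $\SFH$ is by definition the map induced by this generator-level inclusion.

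First I would handle the 1-handle case. Recall from the proof of Proposition~\ref{prop:diagrammatic extension} that the diagrammatic 1-handle attachment on $\HD'$ leaves the basis $\{a_k''\}_{k=1}^m$ unchanged, hence leaves the arcs $\{b_k''\}$ and the monodromy map unchanged, so the preferred intersection collection for $\HD'_1$ is literally the same collection $\bx''$ as for $\HD'$. On the other hand, $\sigma_1$ is the inclusion on generators $\by \mapsto (\by)$ by definition. Therefore
\[
\sigma_1(\phi_\xi(\by)) = \sigma_1((\by,\bx'')) = (\by,\bx'') = \phi_{\xi_1}(\by),
\]
which is exactly the chain map used to define $\Phi_{\xi_1}$.

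Next I would handle the 2-handle case, which is the more substantive verification. In the proof of Proposition~\ref{prop:diagrammatic extension} the extension $\HD'_2$ was built by enlarging the basis to $\{a_k''\}_{k=0}^m$, where $a_0''$ is the arc $R_+(\partial N(L)\setminus \partial h^2)\cap \mu$ inside the new strip. Its perturbation $b_0''$ meets $a_0''$ in precisely one new intersection point, which I would identify with the point $x_0$ appearing in the definition of $\sigma_2$; this is essentially a picture-chasing argument confirming that the perturbation of $a_0''$ in the $\partial S'$-direction coincides with the perturbation used to produce $\beta_0 = b \cup \lambda$ from $\alpha_0 = a \cup \lambda$ in the diagrammatic 2-handle attachment. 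Consequently $\bx_2'' = \bx'' \cup \{x_0\}$, and
\[
\sigma_2(\phi_\xi(\by)) = \sigma_2((\by,\bx'')) = (\by,\bx'',x_0) = \phi_{\xi_2}(\by).
\]

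The main (and only real) obstacle is confirming the compatibility of perturbations in the 2-handle case: the arc $b_0''$ constructed from the partial-open-book basis procedure must be shown to be isotopic, rel endpoints and inside the newly attached strip, to the arc $b_0$ used in the diagrammatic 2-handle attachment, so that they contribute the same intersection point $x_0$ and do not change the holomorphic disk count. Since both perturbations are small pushoffs in the $\partial S'$-direction inside the same strip attached at the feet of the handle, this is a routine convex-surface/isotopy check, and no other contact-compatibility conditions are affected, so the conclusion $\sigma_i \circ \phi_\xi = \phi_{\xi_i}$ holds at the chain level and descends to the asserted equality on homology.
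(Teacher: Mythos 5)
Your proposal is correct and follows essentially the same argument as the paper: identify the preferred intersections of $\HD'_i$ (namely $\bx''$ for $i=1$ and $\bx''\cup\{x_0\}$ for $i=2$) and observe that the chain map $\by\mapsto(\by,\bx''_i)$ inducing $\Phi_{\xi_i}$ factors as $\sigma_i\circ\phi_\xi$. The perturbation-compatibility check you flag as the main obstacle in the 2-handle case is already built into the construction of $\HD'_2$ in \fullref{prop:diagrammatic extension}, so no additional isotopy argument is needed.
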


\begin{proof}
For $i=1$, the preferred intersections $\bx''$ of $\HD'$ are also the preferred intersections of $\HD'_1$. The $\HKM$ map $\Phi_{\xi_1}$ is thus induced by the chain map $\by\to (\by,\bx'')$, which factors as $\sigma_1\circ\phi_{\xi}$.

For $i=2$, the preferred intersections of $\HD'_2$ are $\bx''\cup x_0$, where $x_0 = b_0''\cap a_0''$, the preferred intersection of the diagrammatic 2-handle. The $\HKM$ map $\Phi_{\xi_2}$ is thus induced by the chain map $\by\to (\by,\bx'',x_0)$, which factors as $\sigma_2\circ\phi_{\xi}$.
\end{proof}

We are now ready to relate diagrammatic and $\HKM$ maps.

\begin{lemma}
\label{lem:padded handles decomposition}
Let $(M,\Gamma)$ be a balanced sutured manifold. There is a contact-compatible diagram $\HD$ for $(-M,-\Gamma)$, a contact-compatible extension $\HD'$ for $(-(M\cup P),-\Gamma)$, a diagrammatic handle attachment $\HD'_i$ for $(-(M\cup P_i),-\Gamma_i)$, and a graded homotopy equivalence $\phi_{\xi_{\Gamma}}:\SFC(\HD)\to\SFC(\HD')$ such that the induced map on homology

\[
(\sigma_i\circ\phi_{\xi_{\Gamma}})_*:\SFH(-M,-\Gamma)\to\SFH(-(M\cup P_i),-\Gamma_i)\approx\SFH(-M_i,-\Gamma_i)
\]
is the $\HKM$ map $\Phi_i$.
\end{lemma}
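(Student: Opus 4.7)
The plan is to realize the inclusion $M\subset M\cup P$ by a sequence of trivial bypass attachments at the diagrammatic level, and then combine this with the diagrammatic $i$-handle attachment supplied by \fullref{prop:diagrammatic extension}. Concretely, I would start with any contact-compatible diagram $\HD$ for $(-M,-\Gamma)$ arising from a partial open book decomposition, together with some convenient auxiliary contact structure on $M$ compatible with $\Gamma$ which we may freely extend by the invariant contact structure on $P$. The key observation is that the $[0,1]$-invariant contact structure $\xi_\Gamma$ on the padding $P = \partial M \times [0,1]$ admits a decomposition into a finite sequence of trivial bypass attachments on $\partial M\times \{0\}$; this is a standard consequence of convex surface theory, since each bypass attachment can be chosen to not alter the dividing set, and stacking enough of them produces the invariant collar.

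Each trivial bypass attachment in this decomposition has an associated diagrammatic trivial bypass attachment as in \fullref{fig:positive bypass} and \fullref{fig:negative bypass}. Applying these iteratively to $\HD$ produces a diagram $\HD'$ for $(-(M\cup P),-\Gamma)$, and at each step the chain-level map is of the form $\by\mapsto(\by,x_0)$, which is a graded isomorphism by \fullref{lem:trivial bypass maps}. Composing these, the induced extension map $\phi_{\xi_\Gamma}:\SFC(\HD)\to\SFC(\HD')$ is given by $\by\mapsto(\by,\bx'')$, where $\bx''$ is the concatenation of the preferred intersection points of each individual bypass, and is a graded homotopy equivalence. I would then check that this $\HD'$ can be identified with the contact-compatible extension associated to the padding, i.e. that the Legendrian graphs $K_W\cup K''$ produced by the sequence of bypass attachments satisfy the hypotheses recalled in \fullref{sec:hkm handles}, and that the preferred intersections $\bx''$ coincide.

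With $\HD'$ contact-compatible in hand, \fullref{prop:diagrammatic extension} provides a contact-compatible extension $\HD_i'$ of $\HD$ for $(-(M\cup P_i),-\Gamma_i)$ obtained from $\HD'$ by a diagrammatic $i$-handle attachment. By \fullref{prop:diagrammatic preferred intersection}, the composition $\sigma_i\circ\phi_{\xi_\Gamma}$ induces the $\HKM$ map associated to the inclusion $M\subset M\cup P_i$ with compatible contact structure $\xi_\Gamma\cup h^i$ on $P_i$. Under the natural identification $(M\cup P_i,\Gamma_i)\cong (M_i,\Gamma_i)$, this $\HKM$ map is precisely $\Phi_i$ by the definition of a padded contact handle attachment.

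The main obstacle I anticipate is the bookkeeping in the second paragraph: one must produce a decomposition of the invariant contact structure on $P$ into trivial bypasses for which the successive diagrammatic extensions assemble into an \emph{honestly} contact-compatible diagram, rather than merely a diagram weakly equivalent to one. Equivalently, one must exhibit a single Legendrian graph $K_W\subset W=\partial M\times [0,1]$ satisfying the conditions of Theorem~1.1 of \cite{HKM2} whose induced partial open book extension generates exactly the iterated trivial-bypass diagram constructed above. Since convex surface theory permits us to choose the attaching arcs of the bypasses to lie on $\partial M\times\{0\}$ and to form the boundary of a neighborhood of such a Legendrian graph, this verification is topological but somewhat delicate; once it is in place, the rest of the argument is purely formal from \fullref{prop:diagrammatic extension}, \fullref{prop:diagrammatic preferred intersection}, and \fullref{lem:trivial bypass maps}.
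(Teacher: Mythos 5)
Your proposal follows essentially the same route as the paper: realize the padding $P$ via trivial bypass attachments, so that $\phi_{\xi_\Gamma}$ is a composition of diagrammatic trivial bypass maps and hence a graded homotopy equivalence by \fullref{lem:trivial bypass maps}, then conclude with \fullref{prop:diagrammatic extension} and \fullref{prop:diagrammatic preferred intersection}. The one ``delicate'' step you flag --- that the iterated trivial-bypass diagram is an honestly contact-compatible extension with the stated preferred intersections --- is exactly what the paper imports by citing the proof of Theorem~6.1 of \cite{HKM3}, so you can appeal to that argument rather than reconstructing it.
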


\begin{proof} 
From the proof of Theorem 6.1 in \cite{HKM3}, there is a contact compatible extension $\HD'$ of a contact-compatible diagram $\HD$ such that the map $\phi_{\xi_{\Gamma}}:\SFC(\HD)\to\SFC(\HD')$ which induces $\Phi_{\xi_{\Gamma}}$ is a graded homotopy equivalence induced by a composition of diagrammatic trivial bypass maps. The lemma then follows immediately from \fullref{prop:diagrammatic preferred intersection}.
\end{proof}

\section{Background on Zarev's Gluing Map} 
\label{sec:the join map}

We now turn our attention to Zarev's bordered gluing map $\Psi_{\SF}$. In this section we review the requisite bordered sutured Floer theory for understanding the structure of this gluing map, though the interested reader should look in \cite{Zar11} for a full treatment of the topological and algebraic objects involved. Readers familiar with bordered sutured theory and Zarev's gluing map should skip to \fullref{sec:the bordered contact gluing map}, where we define the associated contact gluing map $\Psi_{\xi}$ and prove that it satisfies the composition law necessary for the proof of \fullref{thm:main}.

We deviate from Zarev's notational conventions in the following ways. Throughout, we reserve script letters for partially sutured manifolds, bordered sutured manifolds, and sutured surfaces. We avoid using them for honest sutured manifolds, except when the margins of the page force us to compress notation. We distinguish between suture sets on sutured manifolds and dividing sets on sutured surfaces by using a lower case $\gamma$ instead of $\Gamma$ for the dividing set on a sutured surface. As mentioned in  \fullref{sec:basic objects}, we denote the mirror of a partially sutured manifold $\W$ by $\overline{\W}$ instead of $-\W$. If $\HD$ is a diagram for a sutured manifold $(M,\Gamma)$, then Zarev treats $\SFC(M,\Gamma)$ as the $\A_\infty$--homotopy type of the complex $\SFC(\HD)$; morphisms between sutured Floer complexes are then defined up to $\A_\infty$--homotopy equivalence. We will instead work with representatives of these invariants, treating sutured Floer complexes as honest graded $\F_2$-vector spaces, and graded chain maps as morphisms. Similarly, we treat the bordered invariants $\BSA$, $\BSD$, and $\BSAA$ as honest Type-A, Type-D, and Type-AA structures, instead of their  $\A_\infty$--homotopy classes.

\subsection{Some basic objects}
\label{sec:basic objects}

We first review some topological objects which play an important role in bordered sutured theory. A \textit{sutured surface} is an oriented surface $F$ with boundary and a number of signed points $\Lambda\subset\partial F$ which alternate in sign as one traverses each component of $\partial F$; each component of $\partial F$ is required to intersect $\Lambda$. A sutured surface $\SF = (F,\Lambda)$ has associated sutured surfaces $-\SF = (-F,-\Lambda)$ and $\overline{\SF} = (-F,\Lambda)$. A \textit{dividing set} on a sutured surface is a collection of embedded, oriented arcs and closed curves $\gamma\subset F$ such that $\partial \gamma = -\Lambda$ and such that each component of $F\setminus\gamma$ can be oriented so that the induced orientation on the boundary agrees with the orientation of $\gamma$. Components where this orientation agrees with the orientation on $F$ constitute the $R_+(\gamma)$ region, while the other components constitute the $R_-(\gamma)$ region.  Unless otherwise indicated, we assume that all sutured surfaces are \textit{non-degenerate}, i.e. they have no closed components.

A \textit{partially sutured manifold} $\W = (W,\gamma,\SF)$ is a manifold $M$ whose boundary is the union of two sutured surfaces with common boundary, one of which (usually referred to as the ``sutured part of the boundary'') has a dividing set $\gamma$ and the other being $\SF$. If $\W_1 = (W_1,\gamma_1,\SF_1\sqcup \SF_2)$ and $\W_2 = (W_2,\gamma_2,-\SF_2\sqcup\SF_3)$ are partially sutured manifolds, then we can \textit{glue} or \textit{concatenate} them along $\SF_2$ to obtain
\[
\W_1\cup_{\SF_2}\W_2 = (W_1\cup_F W_2,\gamma_1\cup_{\Lambda_2}\gamma_2,\SF_1\sqcup\SF_3).
\]

If $\W = (W,\gamma, \SF)$ is a partially sutured manifold, we denote $(-W,-\gamma,-\SF)$ by $-\W$ and $(-W,\gamma,\overline{\SF})$ by $\overline{\W}$. Note that this differs from Zarev's notation, where there is no notation for the former and the latter is denoted $-\W$. In the special case of an honest sutured manifold, i.e. $\SF = (\emptyset,\emptyset)$, we have $-(M,\Gamma) = (-M,-\Gamma)$, and $\overline{(M,\Gamma)} = (-M,\Gamma)$.

 If $(M,\Gamma)$ is a sutured manifold, and $\W = (W,\gamma,\SF)$ is a partially sutured manifold with $W\subset M$, $\INT(F)\subset M$, $\partial W\setminus \INT(F)\subset \partial M$, and $\gamma\subset \Gamma$, we denote the partially sutured manifold $(\overline{M\setminus W}, \Gamma\setminus \INT(\gamma), -\SF)$ by $(M,\Gamma)\setminus \W$.

A \textit{bordered sutured manifold} $(M,\gamma,\SF,\SZ)$ is a partially sutured manifold whose non-sutured part $\SF$ of the boundary is parametrized by an arc diagram $\SZ$. We discuss arc diagrams in \fullref{sec:arc diagrams}

 Let $\SF = (F,\Lambda)$ be a sutured surface and $\gamma$ a dividing set on $\SF$. The \textit{cap} associated to $\gamma$ is the partially sutured manifold $(F\times [0,1],(\Lambda\times [0,1])\cup (\gamma\times\{1\}),-\SF)$. The caps associated to contact 1-and 2-handle attachments are shown in \fullref{fig:unbordered caps}.

The \textit{positive twisting slice} $\TW_{\SF,+}$ for $\SF$ is the partially sutured manifold $(F\times [0,1],\gamma,-\SF\cup-\overline{\SF})$, where $\gamma$ is obtained by giving $\Lambda\times [0,1]$ a minimal fractional Dehn twist in the $\partial F$-direction. The positive twisting slices for contact 1-and 2-handles are shown in \fullref{fig:twisting slices}.  The \textit{negative twisting slice} $\TW_{\SF,-}$ for $\SF$ is defined the same way, except that one twists in the $-\partial F$-direction. Note that $-\TW_{\SF,+} \cong \TW_{-\SF,-}$.

Given a partially sutured manifold $\W$ with non-sutured boundary $\SF$, the \textit{double} $\D(\W)$ of $\W$ is the sutured manifold $\overline{\W}\cup_{\overline{\SF}}\TW_{\overline{\SF},-}\cup_{-\SF}\W$. Suppose $\W$ is the cap for a dividing set $\gamma$ on $\SF$, and consider the the $[0,1]$-invariant contact structure $\xi_{\gamma}$ on $F\times [0,1]$ associated to $\gamma$. Then $\xi_{\gamma}$ induces a dividing set $\Gamma_{\gamma}$ on $F\times [0,1]$ such that $(F\times [0,1],\Gamma_{\gamma})$ is diffeomorphic to $\D(\W)$ as sutured manifolds. Note that $\Gamma_{\gamma}$ is a dividing set in the sense of convex surface theory, not in the sense of sutured surfaces discussed above.

\subsection{Gluing along sutured surfaces}
\label{sec:bordered gluing}
We now review the operation of gluing sutured manifolds in the sense of \cite{Zar11} and its connection with contact geometry.

\begin{figure}
\includegraphics[width=1.0\textwidth]{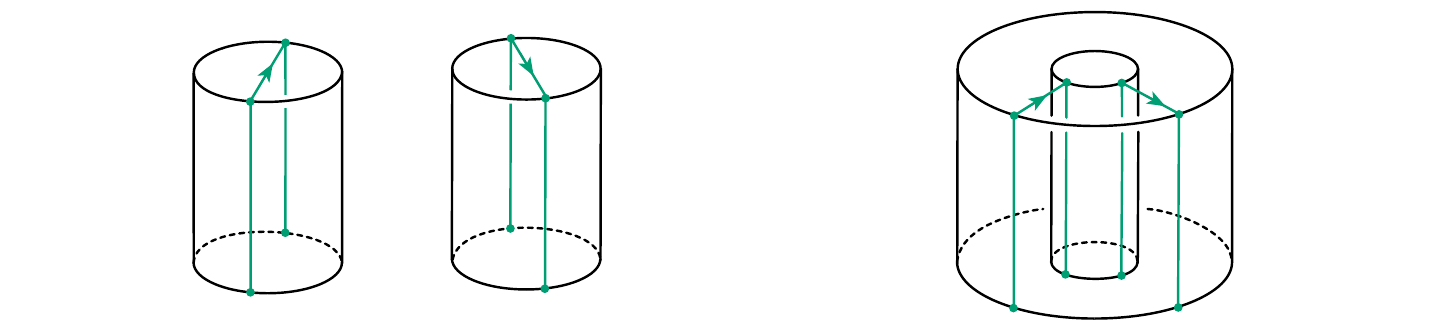}
  \caption{Caps for contact 1-and 2-handles. The dividing set $\gamma$ is shown on the top and sides, while the bottom $-\SF$ is undecorated.}
\label{fig:unbordered caps}
\end{figure}

Let $\SF = (F,\Lambda)$ be a sutured surface with dividing set $\gamma$. Let $(M_1,\Gamma_1)$ and $(M_2,\Gamma_2)$ be sutured manifolds with embeddings $(F,\gamma)\subset(\partial M_1,\Gamma_1)$ and $(-F,\gamma) \subset (\partial M_2,\Gamma_2)$. The embedding $F\to\partial M_1$ extends to an embedding $\W\to M_1$, where $\W $ is the cap for the dividing set $\gamma$ on $\SF$. Similarly, $-F\to\partial M_2$ extends to an embedding $\overline{\W}\to M_2$. Zarev defines the result of gluing $(M_1,\Gamma_1)$ to $(M_2,\Gamma_2)$ along $(\SF,\gamma)$ to be the manifold
\[
(M_1,\Gamma_1)\cup_{\SF}(M_2,\Gamma_2) := ((M_1,\Gamma_1)\setminus \W)\cup_{\SF} \TW_{\SF,+}\cup_{-\overline{\SF}} ((M_2,\Gamma_2)\setminus \overline{\W}),
\]
where $\TW_{\SF,+}$ is the positive twisting slice associated to $\SF$. We refer to the gluing operation above as a \textit{bordered gluing} to distinguish it from a proper inclusion of sutured manifolds; a bordered gluing operation is shown in \fullref{fig:gluing manifolds}. Note that the manifold $(M_1,\Gamma_1)\cup_{\SF}(M_2,\Gamma_2)$ is independent of the dividing set $\gamma$; however, the bordered gluing map defined below will depend on it.

\begin{figure}
\includegraphics[width=1.0\textwidth]{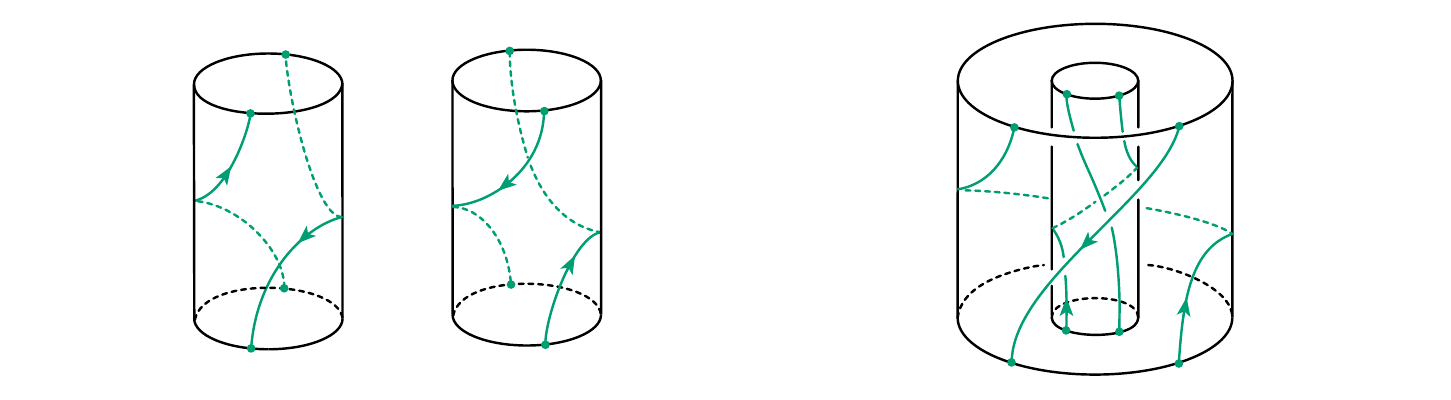}
  \caption{The positive twisting slices for contact 1-and 2-handles.}
\label{fig:twisting slices}
\end{figure}

\begin{figure}
\includegraphics[width=1.0\textwidth]{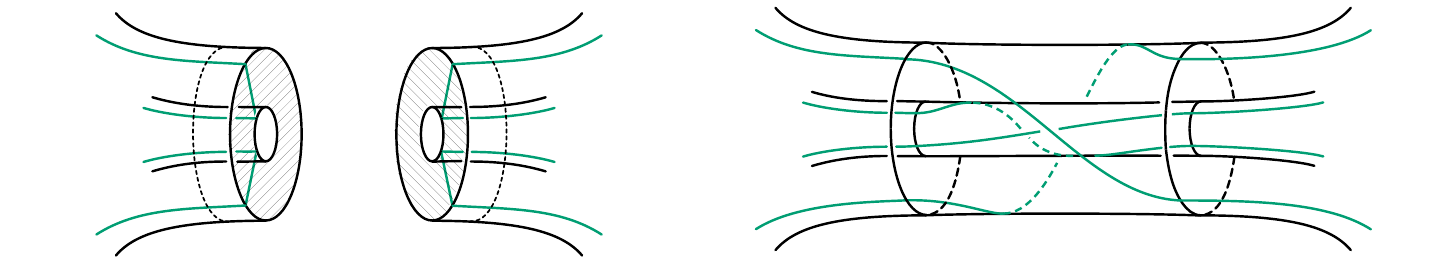}
  \caption{A bordered gluing operation along an annulus. The caps $\W$ and $\overline{\W}$ are a neighborhood of the shaded region.}
\label{fig:gluing manifolds}
\end{figure}

Bordered gluing operations describe how to glue the sutured manifolds underlying contact structures with convex boundary; however, there is a necessary orientation reversal. Let $(M,\Gamma,\xi)$ be a connected sutured contact manifold and let $\SF = (F,\Lambda)$ be a sutured surface with $(F,\partial F,\Lambda)\subset (M,\partial M,\Gamma)$; $(\INT(F)\subset \INT(M)$; $F$ convex with respect to $\xi$; and $\partial F$ Legendrian. If $M\setminus F$ has two connected components, then its closure  may be decomposed as sutured contact manifolds $(M_1,\Gamma_1,\xi_1)\sqcup (M_2,\Gamma_2,\xi_2)$, where $\xi_i = \xi|_{M_i}$ and $\Gamma_i$ is the dividing set induced by $\xi_i$ after smoothing corners. Suppose the normal vector for $F$ in $M$ points out of $M_1$ and into $M_2$. If $\W=(W,\Gamma_1|_F,-\SF)$ is the cap for $\SF$, then we can write
\[
(M,\Gamma) \cong ((M_1,\Gamma_1)\setminus\W)\cup_{\SF}\TW_{\SF,-}\cup_{-\overline{\SF}} ((M_2,\Gamma_2)\setminus\overline{\W}).
\]
We will use the notation $\xi = \xi_1\cup_{\SF}\xi_2$ to describe the relationship among these contact structures.

Note that the twist of dividing sets which arises from smoothing corners is negative, instead of the positive twist used in the bordered gluing operation. This is consistent with the orientation reversal associated to the contact invariant, since we have
\[
(-M_1,-\Gamma_1)\cup_{-\SF}(-M_2,-\Gamma_2)= ((-M_1,-\Gamma_1)\setminus -\W)\cup_{-\SF} \TW_{-\SF,+}\cup_{\overline{\SF}} ((-M_2,-\Gamma_2)\setminus -\overline{\W}),
\]
which is just
\[
-[((M_1,\Gamma_1)\setminus \W)\cup_{\SF} \TW_{\SF,-}\cup_{-\overline{\SF}} ((M_2,\Gamma_2)\setminus \overline{\W})] = (-M,-\Gamma).
\]
Thus, it makes sense for the contact invariants $\EH(\xi_1)\in\SFH(-M_1,-\Gamma_1)$ and $\EH(\xi_2)\in\SFH(-M_2,-\Gamma_2)$ to be paired by the bordered gluing map to $\EH(\xi_1\cup_{\SF}\xi_2)$; see \fullref{lem:zarev preserves eh}.

\begin{remark}
\label{rmk:gluing symbol}
Note that we are using the symbol $\cup_{\SF}$ for the distinct operations of gluing partially sutured manifolds along components of their non-sutured boundaries, gluing balanced sutured manifolds along sutured subsurfaces of their boundaries with dividing sets, and gluing contact structures along convex surfaces. We rely on context to distinguish them.
\end{remark}

Zarev shows that if $(M_1,\Gamma_1)$ and $(M_2,\Gamma_2)$ are balanced, so is $(M_1,\Gamma_1)\cup_{\SF}(M_2,\Gamma_2)$. He also defines a gluing map
\[
\SFH(M_1,\Gamma_1)\otimes\SFH(M_2,\Gamma_2) \xrightarrow{\Psi_{\SF,\gamma}} \SFH((M_1,\Gamma_1)\cup_{\SF}(M_2,\Gamma_2)),
\]
associated to the bordered gluing operation. To avoid confusion with the $\HKM$ contact gluing map, we will refer to this map as the \textit{bordered gluing map}. Also, when $\SF$ is embedded in the boundary of a sutured manifold, we will assume unless otherwise stated that the dividing set is the restriction of the sutures to $\SF$; in this case we omit the dividing set from the notation and write $\Psi_{\SF}$ for the bordered gluing map.

\begin{remark}
\label{rmk:zarev naturality}
The bordered gluing map is well-defined up to maps induced by graded homotopy equivalence. It is not known to be natural with respect to holomorphic triangle maps in the way that the $\HKM$ map is.
\end{remark}

The bordered gluing operation is a special case of the \textit{join operation} found in \cite{Zar11}. Similarly, the bordered gluing map is a special case of the \textit{join map}. We will use the fact the the bordered gluing map possesses the following properties of the join.

\begin{theorem}[Zarev]
\label{thm:join map properties}
The bordered gluing map $\Psi_{\SF}$ satisfies the following properties:

\begin{enumerate}
\item (Symmetry) Let $(M_1,\Gamma_1),(M_2,\Gamma_2)$ be sutured manifolds with embeddings $\SF\subset \partial M_1$, $\overline{\SF}\subset \partial M_2$. There are graded isomorphisms $f,g$ such that the following diagram commutes

\[
\begin{tikzcd}
\SFH(M_1,\Gamma_1)\otimes\SFH(M_2,\Gamma_2)\arrow[r, "f"] \arrow[d, "\Psi_{\SF}"]
& \SFH(M_2,\Gamma_2)\otimes\SFH(M_1,\Gamma_1) \arrow[d, "\Psi_{\overline{\SF}}"] \\
\SFH((M_1,\Gamma_1)\cup_{\SF}(M_2,\Gamma_2)) \arrow[r, "g" ]
& \SFH((M_2,\Gamma_2)\cup_{\overline{\SF}}(M_1,\Gamma_1)))
\end{tikzcd}
\]

\item (Associativity) Let $(M_1,\Gamma_1),(M_2,\Gamma_2), (M_3,\Gamma_3)$ be sutured manifolds and $\SF_1,\SF_2$ be sutured surfaces with embeddings $\SF_1\hookrightarrow\partial M_1$, $(\overline{\SF}_1\sqcup \SF_2)\hookrightarrow\partial M_2$, and $\overline{\SF}_2\hookrightarrow\partial M_3$. Write $\M_k = (M_k,\Gamma_k)$. The following diagram commutes up to graded isomorphism:
\[
\begin{tikzcd}
\SFH(\M_1)\otimes\SFH(\M_2)\otimes\SFH(\M_3)\arrow[r, "\Psi_{\SF_2}"] \arrow[d, "\Psi_{\SF_1}"]
& \SFH(\M_1)\otimes \SFH(\M_2\cup_{\SF_2}\M_3) \arrow[d, "\Psi_{\SF_1}"] \\
\SFH(\M_1\cup_{\SF_1}\M_2)\otimes\SFH(\M_3) \arrow[r, "\Psi_{\SF_2}" ]
& \SFH(\M_1\cup_{\SF_1}\M_2\cup_{\SF_2}\M_3)
\end{tikzcd}
\]
\item (Identity) Let $(M,\Gamma)$ be a sutured manifold with embedded cap $\W$ for $\SF$, and let $\D(\W)$ be the double of $\W$. There is a distinguished class $[\Delta]\in\SFH(\D(\W))$, called the \textnormal{diagonal class} such that the map
\[
\Psi_{\SF}(\hphantom{ }\cdot\hphantom{ },[\Delta]): \SFH(M,\Gamma)\to \SFH((M,\Gamma)\cup_{\SF} \D(\W))
\]
is a graded isomorphism.
\end{enumerate}
\end{theorem}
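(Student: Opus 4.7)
The plan is to derive all three properties by specializing Zarev's corresponding results for the more general join operation. The excerpt makes this reduction explicit: the bordered gluing $\cup_{\SF}$ is introduced as a special case of Zarev's join, and the bordered gluing map $\Psi_{\SF}$ as the corresponding special case of the join map. I would begin by carefully matching the definitions from Subsections 3.1 and 3.2 (caps, positive twisting slices, and the pairing $\Psi_{\SF}$) to the setup in \cite{Zar11}, so that $\Psi_{\SF}$ is realized as a box tensor product of type-A and type-D bordered sutured invariants of the appropriate pieces. Once this identification is in place, the proof consists of extracting the three properties from Zarev's proofs of the analogous statements for the join map.

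For Symmetry, I would take $f$ to be the graded swap of tensor factors, and $g$ to be the graded isomorphism induced by the topological identification
\[
(M_1,\Gamma_1)\cup_{\SF}(M_2,\Gamma_2) \;\cong\; (M_2,\Gamma_2)\cup_{\overline{\SF}}(M_1,\Gamma_1),
\]
which holds because the positive twisting slice $\TW_{\SF,+}$ is carried to $\TW_{\overline{\SF},+}$ by the natural orientation-reversing identification of its non-sutured boundary. Commutativity of the square is then the manifest symmetry of the box tensor product. For Associativity, I would identify both compositions with a single threefold pairing and invoke the $\A_\infty$-associativity of $\boxtimes$ established by Zarev; this yields chain-level agreement up to graded homotopy equivalence, hence the claimed commutativity on $\SFH$ up to graded isomorphism. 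For Identity, I would define $[\Delta]$ following Zarev as the distinguished generator of $\SFH(\D(\W))$ obtained from the $\EH$-class of the $[0,1]$-invariant contact structure $\xi_{\gamma}$ under the diffeomorphism $\D(\W) \cong (F\times[0,1],\Gamma_{\gamma})$ discussed at the end of Subsection 3.1. Zarev's identity law for the join then states precisely that pairing with this class is a graded isomorphism, since the type-D structure of the double acts as a projective identity on the type-A invariant of the other piece.

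The main obstacle will be keeping consistent track of the orientation and twist conventions. The bordered gluing operation uses the \emph{positive} twisting slice $\TW_{\SF,+}$, whereas smoothing corners along a convex surface produces a \emph{negative} twist, and the two distinct mirror operations $-\W$ and $\overline{\W}$ defined in Subsection 3.1 must be deployed in the right spots when comparing with Zarev. I would verify, for each of the three diagrams, that the caps, twisting slices, dividing sets, and doubles are oriented and parametrized compatibly with \cite{Zar11}; once these bookkeeping checks are complete, each of the three properties follows essentially verbatim from the corresponding property of the join, requiring no further Floer-theoretic input beyond what is recalled in Section 3.
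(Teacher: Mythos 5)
Your overall strategy --- realizing $\Psi_{\SF}$ as a specialization of Zarev's join map and importing the three properties from \cite{Zar11} --- is exactly what the paper does: the theorem is stated as Zarev's and no independent proof is given, so your Symmetry and Associativity arguments (swap of factors, $\A_\infty$-associativity of $\boxtimes$, plus the orientation bookkeeping you already flag) match the intended reduction.

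The genuine gap is in your treatment of the Identity property. You propose to \emph{define} the diagonal class $[\Delta]\in\SFH(\D(\W))$ as the contact class $\EH(\xi_{\gamma})$ of the $[0,1]$-invariant contact structure under the identification $\D(\W)\cong(F\times[0,1],\Gamma_{\gamma})$, and then to read off the isomorphism from Zarev's identity law. That is not Zarev's definition, and with your definition part (3) becomes false in general: the Identity property is asserted for an \emph{arbitrary} cap $\W$, i.e.\ for an arbitrary dividing set $\gamma$ on $\SF$, and whenever $\gamma$ makes $\xi_{\gamma}$ overtwisted (for instance when $\gamma$ has a contractible closed component) one has $\EH(\xi_{\gamma})=0$, so pairing with it cannot be an isomorphism. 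Zarev constructs $[\Delta]$ purely algebraically, with no contact-geometric input, and it is always nontrivial; the identification $[\Delta]=\EH(\xi_{\gamma})$ is an additional statement that holds only under a hypothesis on $\gamma$ --- in this paper it is proved for disk-decomposable dividing sets as \fullref{lem:contact diagonal}, via a rank-one argument, and the failure in the overtwisted case is precisely the point of \fullref{rmk:generalized gluing} and the reason \fullref{lem:zarev identity} carries the disk-decomposability hypothesis. To repair your proof of part (3), either quote Zarev's algebraic definition of $[\Delta]$ as is, or restrict the contact-geometric description to the disk-decomposable setting, where it is a consequence of (not an input to) the theorem.
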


\section{The Bordered Contact Gluing Map} 
\label{sec:the bordered contact gluing map}

The goal of this section is to define the bordered contact gluing map and prove that it satisfies functorial properties akin to the $\HKM$ map.

\subsection{The bordered contact gluing map}
\label{sec:zarev map}

We will need to introduce a technical condition on the dividing set $\gamma$ in order to ensure that the bordered contact gluing map possesses the desired functorial properties found in \fullref{sec:zarev contact gluing}.

\begin{definition}
\label{def:rank one div set}
A dividing set $\gamma$ on a sutured surface $\SF$ is \textit{disk-decomposable} if the underlying sutured manifold of the $I$-invariant contact structure for $\gamma$ on $(F\times I)$ has a product disk decomposition.
\end{definition}

\begin{definition}
\label{def:zarev gluing map}

Let $(M''',\Gamma'')$ and $(M,\Gamma)$ be sutured manifolds which can be glued along a sutured surface $\SF$ with disk-decomposable dividing set $\gamma$. Let $\xi$ be a compatible contact structure on $M''$. The associated \textit{bordered contact gluing map}
\[
\Psi_{\xi}:\SFH(-M,-\Gamma)\to \SFH((-M'',-\Gamma'')\cup_{-\SF}(-M,-\Gamma))
\]
is defined to be $\Psi_{\xi}([\by]) = \Psi_{-\SF}(\EH(\xi),[\by])$. 
\end{definition}

If $(M'',\Gamma'')$ is a contact handle $h^i$ with attaching region $\SF_i$, then we denote this map by $\Psi_{i}: \SFH(-M,-\Gamma)\to \SFH(-M_i,-\Gamma_i)$.

\begin{remark}
\label{rmk:handle div sets}
Note that the dividing sets for contact handle attachments are disk-decomposable; see \fullref{sec:computing the pairing}.
\end{remark}

\begin{remark}
\label{rmk:generalized gluing}
 Note that some restriction on the dividing set is necessary, since the identity property \fullref{lem:zarev identity} is false when the $I$-invariant contact structure is overtwisted. We conjecture that the bordered contact gluing map map satisfies \fullref{lem:zarev identity}, \fullref{lem:zarev preserves eh}, and \fullref{lem:zarev composition} whenever the $I$-invariant contact structure associated to the dividing set is tight. It would suffice to prove \fullref{lem:contact diagonal} for such dividing sets, since the rest of the arguments carry through.
\end{remark}

\begin{lemma}
\label{lem:well defined}
The map $\Psi_{\xi}$ is well-defined up to graded isomorphism.
\end{lemma}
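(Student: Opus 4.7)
The plan is to reduce the lemma to the well-definedness, up to graded isomorphism, of its two building blocks: the contact class $\EH(\xi)\in \SFH(-M'',-\Gamma'')$ and Zarev's bordered gluing map $\Psi_{-\SF}$. By \cite{HKM2}, the class $\EH(\xi)$ depends only on the isotopy class of $\xi$, so any two implementations of $\SFH(-M'',-\Gamma'')$ identify their respective representatives of $\EH(\xi)$ under the canonical transition isomorphism. By \fullref{rmk:zarev naturality}, any two implementations of $\Psi_{-\SF}$ differ by pre- and post-composition with graded isomorphisms on source and target. Combining these two inputs should immediately yield the lemma.

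To carry this out concretely, I would take two sets of auxiliary data producing bordered gluing maps $\Psi^{(1)}_{-\SF}$ and $\Psi^{(2)}_{-\SF}$, and invoke Zarev's naturality to obtain graded isomorphisms $\phi_1,\phi_2,\phi_3$ relating the corresponding versions of $\SFH(-M'',-\Gamma'')$, $\SFH(-M,-\Gamma)$, and the sutured Floer homology of the glued manifold, fitting into a commutative square
\[
\phi_3\circ\Psi^{(1)}_{-\SF}=\Psi^{(2)}_{-\SF}\circ(\phi_1\otimes\phi_2).
\]
The invariance of $\EH$ gives $\phi_1(\EH^{(1)}(\xi))=\EH^{(2)}(\xi)$. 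Substituting $\EH(\xi)$ into the first slot of each side then collapses the square to the identity
\[
\phi_3\circ\Psi_\xi^{(1)}=\Psi_\xi^{(2)}\circ\phi_2,
\]
which is exactly the assertion that $\Psi_\xi$ is well-defined up to graded isomorphism.

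The main obstacle is confirming that the graded isomorphism $\phi_1$ produced by Zarev's framework is one under which $\EH$ is known to be invariant. In principle one must express $\phi_1$ as a composition of the elementary moves—Heegaard moves, changes of partial open book, stabilization of arc bases, etc.—for which naturality of $\EH$ was established in \cite{HKM2,HKM3}. In practice the transition isomorphisms in Zarev's framework are built from precisely these moves, so the invariance of $\EH$ under them is already on record and no new naturality statement is required; after this identification the rest of the argument is bookkeeping.
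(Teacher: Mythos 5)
Your argument is essentially the paper's own proof: the paper dispatches this lemma in one line, noting it is immediate from the fact that $\Psi_{-\SF}$ is well-defined up to graded isomorphism. Your additional care about the transition isomorphism on $\SFH(-M'',-\Gamma'')$ carrying one representative of $\EH(\xi)$ to the other is a reasonable elaboration of the same route, not a different one.
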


\begin{proof}
This is immediate from the fact that $\Psi_{-\SF}$ is defined up to graded isomorphism.
\end{proof}

\subsection{Properties of the bordered contact gluing map}
\label{sec:zarev contact gluing}

We will now prove that $\Psi_{\xi}$ satisfies identity, $\EH$-preservation, and composition laws analogous to the ones satisfied by the $\HKM$-map. The composition law \fullref{lem:zarev composition} is necessary for the proof of \fullref{thm:main}, and our proof the composition law requires the other two properties.

\begin{lemma}
\label{lem:zarev identity}
Let $(M,\Gamma)$ be a sutured manifold and $\SF\subset \partial M$ be a sutured surface with disk-decomposable dividing set $\gamma$. Let $\W$ be the cap for $-\gamma$ on $-\SF$ and let $\xi_{\gamma}$ be the $I$-invariant contact structure for $-\gamma$ on the double $\D(\W)$. Then
\[
\Psi_{\xi_{\gamma}}: \SFH(-M,-\Gamma)\to \SFH(\D(\W)\cup_{-\overline{\SF}}(-M,-\Gamma))
\]
is a graded isomorphism.
\end{lemma}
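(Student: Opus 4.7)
The plan is to reduce the lemma to the identity property of the join map (\fullref{thm:join map properties}, part 3) by identifying the contact invariant $\EH(\xi_\gamma)$ with the diagonal class $[\Delta] \in \SFH(\D(\W))$. By the definition of the bordered contact gluing map,
\[
\Psi_{\xi_\gamma}(\by) = \Psi_{-\overline{\SF}}(\EH(\xi_\gamma), \by),
\]
so if $\EH(\xi_\gamma)$ agrees with $[\Delta]$ in $\SFH(\D(\W))$ up to a graded automorphism, then the identity property immediately yields that $\Psi_{\xi_\gamma}$ is a graded isomorphism.

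To carry out this identification, the idea is to exhibit a Heegaard diagram for $\D(\W)$ in which both classes are represented by the same generator. Here the disk-decomposability of $\gamma$ plays a crucial role: the sutured manifold underlying $\D(\W)$ is sutured-diffeomorphic to $F \times I$ with dividing set induced by $\xi_\gamma$, and product disk decomposability guarantees a partial open book decomposition of $\D(\W)$ that is compatible with $\xi_\gamma$ and has identity monodromy. In the associated partial open book diagram $\HD$, the contact invariant $\EH(\xi_\gamma)$ is represented by the canonical tuple $\bx_0$ of preferred intersections between the $\alpha$- and $\beta$-arcs.

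On the algebraic side, $[\Delta]$ can be read off from the same diagram after refining it to a bordered sutured diagram by the arc diagram parametrizing $-\overline{\SF}$. Zarev's construction of the diagonal class, via the identity type-AA bimodule $\BSAA(\I)$ associated to the boundary parametrization, ensures that the image of $[\Delta]$ in $\SFH(\D(\W))$ is precisely the generator picked out by the preferred intersection points, namely $\bx_0$. Combining these observations with the fact that $\SFH(\D(\W))$ has a one-dimensional summand in the relevant $\spinc$-structure (again a consequence of the product structure) shows that $\EH(\xi_\gamma) = [\Delta]$, and the lemma follows from the identity property applied to $(-M,-\Gamma)$.

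The main obstacle will be the explicit identification $\EH(\xi_\gamma) = [\Delta]$, which requires bridging the convex-geometric description of the contact invariant with the bordered-algebraic description of Zarev's diagonal class. The disk-decomposability hypothesis is essential precisely because it simultaneously supplies (i) a partial open book decomposition of $\D(\W)$ with identity monodromy on which $\EH(\xi_\gamma)$ is visibly the canonical generator, and (ii) a product decomposition of $\D(\W)$ via which $[\Delta]$ can be computed on the same diagram; as observed in \fullref{rmk:generalized gluing}, some hypothesis of this form is necessary, since for overtwisted $I$-invariant contact structures $\EH(\xi_\gamma)$ would vanish and the map could not be an isomorphism.
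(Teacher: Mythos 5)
Your overall reduction is the same as the paper's: identify $\EH(\xi_\gamma)$ with Zarev's diagonal class $[\Delta]\in\SFH(\D(\W))$ and then invoke \fullref{thm:join map properties}. One small bookkeeping point: the identity property as stated pairs the sutured manifold with $[\Delta]$ sitting in the \emph{second} tensor factor, while $\Psi_{\xi_\gamma}$ is the evaluation of $\Psi_{-\overline{\SF}}$ on $\EH(\xi_\gamma)$ in the \emph{first} factor, so you also need the symmetry property of \fullref{thm:join map properties} to swap the factors; the paper cites both.

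More substantively, the step you yourself flag as ``the main obstacle'' is a genuine gap as written. The claim that Zarev's diagonal class is represented, in a bordered refinement of your partial open book diagram, by the tuple of preferred intersection points cannot simply be read off: $[\Delta]$ is defined via the algebraic join and the identity $AA$-bimodule on an Auroux--Zarev-type diagram, not via partial open books, and matching those two diagrams would itself require a nontrivial argument of the kind carried out elsewhere in the paper for the handle maps. Fortunately you do not need any diagram-level identification. You already have the key consequence of disk-decomposability: the sutured manifold underlying $\D(\W)$ admits a product disk decomposition, so $\SFH(\D(\W))$ has rank one over $\F_2$. Zarev shows $[\Delta]$ is always nontrivial, and the $\EH$ class of the $I$-invariant (tight, product disk decomposable) contact structure is likewise nontrivial; two nonzero classes in a rank-one group coincide, so $\EH(\xi_\gamma)=[\Delta]$ with no further work. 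This is exactly the paper's proof of the identification (\fullref{lem:contact diagonal}), and with that substitution your argument closes.
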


\begin{proof}
This follows immediately from the identity and symmetry properties of the bordered gluing map along with \fullref{lem:contact diagonal}.
\end{proof}

\begin{lemma}
\label{lem:contact diagonal}
Let $\W$ be the cap for a disk-decomposable dividing set $\gamma$ on a sutured surface $\SF$. Let $\xi$ be the $I$-invariant contact structure on $F\times I$. Then the diagonal class $[\Delta]\in \SFH(\D(\W))$ is the contact class $\EH(\xi)$.
\end{lemma}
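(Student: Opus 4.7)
The plan is to identify $[\Delta]$ and $\EH(\xi)$ as the same canonical cycle on a partial open book Heegaard diagram for $\D(\W)$ built from the product disk decomposition guaranteed by disk-decomposability.

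First, since $\gamma$ is disk-decomposable, the sutured manifold $(F\times I, \Gamma_\gamma)$ admits a product disk decomposition, which is dual to a basis of arcs $\{a_1,\ldots,a_n\}$ on $F$. Together with the $I$-invariance of $\xi$, this basis yields a compatible partial open book decomposition of $(F\times I, \xi)$ in which the page is $F$, the monodromy is the identity, and the $\alpha/\beta$ curves are constructed from $\{a_k\}$ in the standard way reviewed in \fullref{sec:hkm handles}. Let $\HD$ denote the resulting partial open book Heegaard diagram for $\D(\W) \cong (F\times I, \Gamma_\gamma)$. By the Honda--Kazez--Mati\'c construction, $\EH(\xi)$ is represented on $\HD$ by the canonical contact tuple $\bx = (x_1,\ldots,x_n)$, where $x_k = \alpha_k \cap \beta_k$ is the preferred intersection point lying on the "top page."

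Next, observe that $\HD$ can also be obtained by doubling a bordered sutured Heegaard diagram $\HD_\W$ for the cap $\W$---namely, the diagram adapted to the basis $\{a_k\}$---and inserting the standard diagram for the twisting slice $\TW_{\overline{\SF},-}$ that appears in $\D(\W) = \overline{\W}\cup_{\overline{\SF}}\TW_{\overline{\SF},-}\cup_{-\SF}\W$. Under this description, Zarev's diagonal class $[\Delta]\in\SFH(\D(\W))$ is represented by pairing the canonical generator of $\BSD(\HD_\W)$ (indexed by the basis arcs) with its mirror in $\BSA(\overline{\HD}_\W)$ across the twisting slice. Because the monodromy of this partial open book is trivial and the twisting slice contributes only the minimal fractional Dehn twist required by the concatenation, unpacking the pairing formula shows that the resulting cycle in $\SFC(\HD)$ is precisely the tuple $\bx$. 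Hence $[\Delta] = \EH(\xi)$ as classes in $\SFH(\D(\W))$.

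The \textbf{main obstacle} is establishing the identification in the second paragraph: matching Zarev's algebraic definition of $[\Delta]$, as the class corresponding to the identity morphism on $\BSD(\W)$ (equivalently, the diagonal element of $\BSA(\overline{\W})\otimes\BSD(\W)$), with the geometric partial open book generator $\bx$. The key technical step is to show that, on the canonical bordered sutured diagram $\HD_\W$ dual to the product disk decomposition, the diagonal idempotent in the arc-diagram algebra for $\SF$ is realized by the unique generator whose idempotent occupies every basis arc, and that pairing this generator with its mirror through the twisting slice yields the tuple $\bx$ by a direct holomorphic-disk count. A fallback strategy, should this direct matching prove unwieldy, is induction on the number of product disks in the decomposition: the base case is a tight 3-ball (where $\SFH\cong\F_2$ and both classes are the unique nonzero element, the latter because $\Psi_{\SF}(\,\cdot\,,[\Delta])$ is a graded isomorphism), while the inductive step combines associativity from \fullref{thm:join map properties} with the known behavior of $\EH$ under decomposition along a convex product disk.
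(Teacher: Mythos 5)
There is a genuine gap, and it sits exactly where you flag it: the ``main obstacle'' of matching Zarev's algebraic diagonal class with the canonical partial open book generator $\bx$ is the entire content of the lemma under your approach, and it is only asserted (``unpacking the pairing formula shows that the resulting cycle \ldots is precisely the tuple $\bx$''), not carried out. Making that identification honest is genuinely delicate: $[\Delta]$ is defined through the bordered pairing and is only pinned down up to the graded homotopy equivalences inherent in the bordered sutured theory (cf.\ \fullref{rmk:zarev naturality}), so chasing it to a specific generator of a specific diagram requires a concrete model of the twisting-slice bimodule and a holomorphic-curve computation that you do not supply. Your fallback induction is likewise underdeveloped---the inductive step ``combines associativity with the known behavior of $\EH$ under decomposition along a convex product disk'' is not an argument as stated.

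The hypothesis of disk-decomposability makes all of this unnecessary, and this is the route the paper takes: by \fullref{def:rank one div set} the underlying sutured manifold of $(F\times I,\xi)$ admits a product disk decomposition, so by Juh\'asz's decomposition results $\SFH(\D(\W))\cong\F_2$ has rank one. Zarev shows $[\Delta]\neq 0$, and $\EH(\xi)\neq 0$ since $\xi$ is the $I$-invariant structure on a product-disk-decomposable manifold; two nonzero classes in a rank-one $\F_2$-vector space coincide, so $[\Delta]=\EH(\xi)$. Note that your base case for the fallback is essentially this argument in miniature---once you observe that the rank-one statement holds for every disk-decomposable $\gamma$ (not just the tight ball), no induction and no diagrammatic matching of generators is needed.
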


\begin{proof}
The diagonal class is always non-trivial by \cite{Zar11}. Because the dividing set is disk-decomposable, $\SFH(\D(\W))$ has rank 1 and the contact invariant is non-trivial. Thus, the contact invariant and the diagonal class are the same.
\end{proof}

We now wish to prove that the bordered contact gluing map preserves the contact invariant. To do this, we rely on the identification of the bordered contact handle maps with $\HKM$ handle maps provided by \fullref{lem:one handle maps} and \fullref{lem:two handle maps}. This shows that each bordered contact handle map $\Psi_i$ preserves the contact invariant. We then need to leverage associativity of the bordered gluing map; see \fullref{thm:join map properties}. However, we can only do this with contact handles which have disjoint attaching regions, and this cannot always be achieved when the attaching curve of a 2-handle runs over a 1-handle. To get around this problem, we now introduce a version of the padded handles of \fullref{sec:hkm handles} adapted to our needs. We will prove that the bordered contact maps associated to these modified handle attachments are equivalent to the $\Psi_i$.

Let $(M,\Gamma)$ be a balanced sutured manifold and $\xi_{\Gamma}$ be the $[-1,0]$-invariant contact structure in a closed neighborhood $N(\partial M)\cong \partial M\times [-1,0]$ of the boundary $\partial M\times \{0\}$. Let $\Sigma$ be a subsurface of $\partial M$ such that $\gamma = \Gamma|_\Sigma$ is a dividing set for the (possibly degenerate) sutured surface $(\Sigma,-\partial \gamma)$. Let $\xi_\gamma$ be the $[0,1]$-invariant contact structure on $\Sigma\times [0,1]$ compatible with $\gamma$.

If $h^i$ is a contact handle whose attaching region $A$ is contained in $\Sigma$, let $L$ be a Legendrian graph of the form $\{q_k\}\times [0,1]$ where each $q_k$ is a point in $\gamma$ disjoint from $A$ and each component of $\gamma$ contains at least one $q_k$. The \textit{punctured padding} associated to $(\Sigma,A)$ and $L$ is the contact manifold $((\Sigma\times [0,1])\setminus N(L), \xi_\gamma^L)$, where $\xi_\gamma^L$ is the restriction of $\xi_\gamma$ to $(\Sigma\times [0,1])\setminus N(L)$. We denote the underlying sutured manifold by $(P^L,\Gamma^L)$, where $\Gamma^L$ is the dividing set induced by $\xi_\gamma^L$. The \textit{punctured padded contact i-handle} associated to $\Sigma$, $h^i$, and $L$ is $((\Sigma\times [0,1])\setminus N(L), \xi_\gamma^L)\cup_{A\times\{1\}} h^i$; we denote the associated sutured contact manifold by $(P^L_i,\Gamma^L_i,\xi_i^L)$. The sutured surface $(\Sigma\setminus N(\{q_j\}), -\partial( \gamma\setminus N(\{q_j\})))$  is denoted $\SF^L$. Note that $\xi_\Gamma\cup_{A} h^i$ is contactomorphic to $\xi_\Gamma\cup_{\SF^L}\xi^L_i$ relative to $\partial M\times [-1,-\frac12]$.

\begin{remark}
\label{rmk:stacking punctured handles}
Note that a contact handle decomposition of 1-and 2-handles can be described by a punctured padded contact handle decomposition. If a contact structure $\xi$ can be factored into padded handles $(P_{i_1},\xi_{i_1}),\ldots,(P_{i_n},\xi_{i_n})$ attached to an $I$-invariant contact structure $(P,\xi_{\Gamma})$, we can choose Legendrians $L_j\subset P_{i_j}$ so that each $L_j$ can be extended through $P_{i_{j+1}}\cup\ldots\cup P_{i_n}$ without intersecting the attaching region of any contact handle; see \fullref{fig:Punctured Padded contact handles}. One can do this in such a way that $\xi$ restricted to $P\cup P_{i_1}^{L_1}\cup\ldots\cup P_{i_n}^{L_n}$ is contactomorphic to $\xi$. One can also perform a similar construction if each padded handle $(P_{i_j},\xi_{i_j})$ is replaced by $(\Sigma_j\times [j,j+1],\xi_{\gamma_j})\cup h^{i_j}$, where each $\Sigma_j$ is a (possibly degenerate) sutured surface with dividing set $\gamma_j$. We employ this latter case in \fullref{lem:zarev preserves eh} and \fullref{lem:zarev composition}; details are left to the reader.
\end{remark}

In order to make use of these modified contact handles, we need to to know that the dividing set $\Gamma^L|_{\SF^L} =\gamma\setminus N(\{q_j\}) $ is disk-decomposable.

\begin{lemma}
\label{lem:rank one div sets}
Let $\Sigma$ be a (possibly degenerate) sutured surface with dividing set $\gamma$, and let $\{p_k\}$ be a finite collection of points which has non-empty intersection with each closed component of $\gamma$. Let $F$ be the surface $\Sigma\setminus N(\{p_k\})$. Then $(F,-\partial\gamma|_F)$ is a non-degenerate sutured surface and the dividing set $\gamma|_F$ is disk-decomposable.
\end{lemma}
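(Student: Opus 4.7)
The plan is to verify non-degeneracy and disk-decomposability separately. The key observation underpinning both is that the hypothesis on $\{p_k\}$ forces $\gamma|_F$ to consist only of arcs: each closed component of $\gamma$ contains some $p_k$ and is cut into arcs with endpoints on the new boundary circle $\partial N(p_k)$, while the arc components of $\gamma$ remain arcs (possibly shortened).

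For non-degeneracy, I would decompose $\partial F = \partial\Sigma \sqcup \bigsqcup_k \partial N(p_k)$ after first isotoping each $p_k$ into $\INT(\Sigma)$. Every component of $\partial\Sigma$ already meets $\Lambda = -\partial\gamma$ by the sutured structure on $\Sigma$, and each new circle $\partial N(p_k)$ meets $\gamma$ transversely in exactly two points, which become endpoints of $\gamma|_F$. Hence every boundary component of $F$ meets $-\partial\gamma|_F$.

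For disk-decomposability, I would induct on $-\chi(F)$ to produce a sequence of product-disk decompositions reducing the $I$-invariant contact structure on $F\times I$ to a disjoint union of standard $3$-balls. The base case is $F$ a disjoint union of disks each carrying a single arc of $\gamma|_F$, for which $F\times I$ is a disjoint union of standard $3$-balls. For the inductive step, the plan is to produce a product disk $a\times I$, where $a$ is a properly embedded arc in $F$ meeting $\gamma|_F$ transversely in exactly one point and with $\partial a \subset \partial F \setminus \Lambda$. Such an $a$ is obtained by choosing an arc $\alpha$ of $\gamma|_F$ and then a ``dual'' arc transverse to $\alpha$ crossing no other arc of $\gamma|_F$. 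The product disk condition then follows because the boundary of $a\times I$ meets the sutures of $F\times I$ in exactly two points, namely $(a\cap\alpha)\times\{0,1\}$; the remaining boundary arcs $\partial a \times I$ avoid the sutures $\Lambda\times I$ by construction. Decomposing along $a\times I$ yields an $I$-invariant contact structure on $(F\setminus N(a))\times I$ whose dividing set still consists only of arcs (the arc $\alpha$ splits into two sub-arcs along $a$, and no closed curves are introduced), and since cutting along $a$ strictly increases $\chi(F)$, the induction closes.

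The main obstacle is constructing the dual arc $a$ so that it meets $\gamma|_F$ only in its transverse intersection with $\alpha$. When $\gamma|_F$ is sparse this is routine, but in general one first simplifies the pair $(F,\gamma|_F)$ by choosing $\alpha$ to be outermost---that is, cobounding with an arc of $\partial F$ a subdisk of $F$ whose interior is disjoint from $\gamma|_F$---which guarantees that a small dual arc in a neighborhood of $\alpha$ avoids the rest of the dividing set. Existence of such an outermost arc follows from a standard innermost-disk argument applied to the complement $F\setminus\gamma|_F$, and combined with the routine verification that each new piece remains of the form $F'\times I$ with an $I$-invariant contact structure and arc-only dividing set, this completes the induction.
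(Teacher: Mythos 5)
Your non-degeneracy argument is fine and matches the paper's (brief) treatment, but the disk-decomposability step fails at its central claim, namely your identification of the sutures of the product sutured manifold. The dividing set $\Gamma_\xi$ induced on $\partial(F\times I)$ by the $I$-invariant contact structure is \emph{not} $(\gamma|_F\times\{0,1\})\cup(\Lambda\times I)$: rounding the corners $\partial F\times\{0,1\}$ introduces a fractional twist on the vertical annuli $\partial F\times I$ (this is exactly the twisting slice $\TW_{\SF,\pm}$ appearing in the definition of the double $\D(\W)$ in \fullref{sec:basic objects}), so the suture arcs on $\partial F\times I$ run from a marked point at the bottom to an \emph{adjacent} marked point at the top. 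Consequently every vertical segment $\{q\}\times I$ with $q\in\partial F\setminus\Lambda$ crosses $\Gamma_\xi$ exactly once. Hence for your proposed disk $a\times I$, with $a$ meeting $\gamma|_F$ transversely in a single point and $\partial a\subset\partial F\setminus\Lambda$, the boundary meets $\Gamma_\xi$ in \emph{four} points (one each on $a\times\{0\}$ and $a\times\{1\}$, and one on each vertical side), not two; it is not a product disk, so decomposing along it does not establish disk-decomposability in the sense of \fullref{def:rank one div set}. That the twist is unavoidable can be seen already for $F$ a disk with two parallel dividing chords: your untwisted suture would be two parallel circles on $\partial(F\times I)\cong S^2$, which is not even a balanced sutured manifold, whereas the correct suture is a single circle (the tight ball).

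The correct choice is the opposite one, which is what the paper does: take disjoint arcs $a_i\subset F\setminus\gamma|_F$ with $\partial a_i\subset\partial F$ cutting $F$ into disks (possible since $\gamma|_F$ consists only of arcs, which also makes $\xi$ tight). Then each $\partial(a_i\times I)$ meets $\Gamma_\xi$ exactly twice, both intersections coming from the corner-rounding twist on the two vertical sides, so these are genuine product disks and yield the required product disk decomposition. There are also secondary problems with your induction — once the pieces are disks carrying several dividing arcs, $-\chi(F)$ no longer controls termination, and an ``outermost'' arc of $\gamma|_F$ cobounding a disk with $\partial F$ need not exist when the arcs of $\gamma|_F$ are essential in $F$ — but these are moot given the failure of the product-disk condition itself.
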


\begin{proof}
$(F,-\partial\gamma|_F)$ is non-degenerate by construction. Let $\xi$ be the $[0,1]$-invariant contact structure on $F\times [0,1]$ compatible with $\gamma|_F$ and $\Gamma_{\xi}$ be the sutures induced by $\xi$. Note that $\xi$ is tight, since $\gamma|_F$ has no closed components.

We can choose a collection of disjoint embedded arcs $\{a_i\}\subset F\setminus\gamma|_F$ with $\partial a_i\subset \partial F$ such that $F\setminus \cup a_i$ is a number of disks. The collection $\{a_i\times[0,1]\}\subset F\times[0,1]$ determines a product disk decomposition of $\xi$. (Each $\partial(a_i\times[0,1])$ intersects $\Gamma_{\xi}$ twice because of smoothing at the corners.) Thus, $\SFH(F\times[0,1],\Gamma_{\xi})$ has rank one.
\end{proof}

Note that punctured padded handles depend on a choice of subsurface $\Sigma$ and Legendrian graph $L$. We will show that this ambiguity does not affect the associated gluing maps. To prove this and the remaining properties of $\Psi_{\xi}$, we assume the following consequence of \fullref{lem:one handle maps} and \fullref{lem:two handle maps}; we may do this, since the proof of these these lemmas does not depend on any of the results in this section.

\begin{figure}[H]
\labellist
	
	\pinlabel $\cong$ at 227 143
	
\endlabellist
\includegraphics[width=1.0\textwidth]{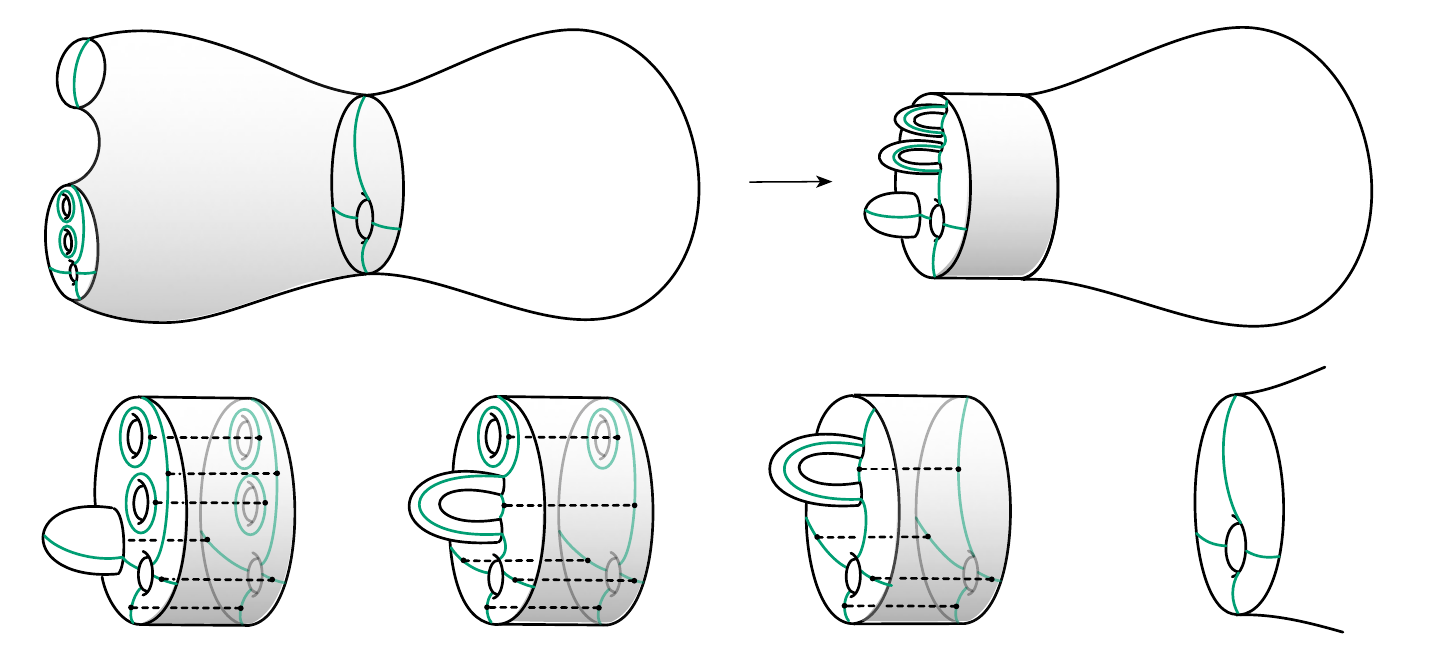}
\caption{Decomposing a contact structure into padded handle attachments. A choice of Legendrians which determine a punctured padded handle decomposition are indicated by dashed lines.}
\label{fig:Punctured Padded contact handles}
\end{figure}

\begin{lemma}
\label{lem:handle preserves eh}
Let $(M_i,\Gamma_i,\xi_i)$ be a sutured contact manifold obtained from $(M,\Gamma,\xi)$ by a contact $i$-handle attachment, and let $\Psi_i:\SFH(-M,-\Gamma)\to (-M_i,-\Gamma_i)$ be the associated bordered contact gluing map. Then $\Psi_i(\EH(\xi)) = \EH(\xi_i)$ up to graded isomorphism.

\end{lemma}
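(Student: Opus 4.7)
The plan is to reduce the claim to the defining property of the HKM map, namely that if $\Phi_\xi$ is the HKM map for a proper inclusion with compatible contact structure $\xi$, then $\Phi_\xi(\EH(\zeta)) = \EH(\zeta \cup \xi)$. The bordered contact gluing map $\Psi_i$ associated with an $i$-handle attachment $h^i$ is defined by pairing with $\EH(h^i)$ along the attaching region; the HKM side of the story identifies $h^i$ attachment with the proper inclusion $(M,\Gamma) \subset (M \cup P_i, \Gamma_i)$ of \fullref{lem:padded handles decomposition}. I would just string these identifications together.

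First, I would invoke \fullref{lem:one handle maps} and \fullref{lem:two handle maps}, which by hypothesis identify $\Psi_i$ with the HKM handle-attachment map $\Phi_i : \SFH(-M,-\Gamma) \to \SFH(-M_i,-\Gamma_i)$ up to graded isomorphism. In more detail, the diagram
\begin{center}
\begin{tikzcd}
\SFH(-M,-\Gamma) \arrow[r, "\cong"] \arrow[d, "\Psi_i"']
& \SFH(-M,-\Gamma) \arrow[d, "\Phi_i"] \\
\SFH(-M_i,-\Gamma_i) \arrow[r, "\cong"]
& \SFH(-M_i,-\Gamma_i)
\end{tikzcd}
\end{center}
commutes with graded isomorphisms on the horizontal arrows. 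Because the isomorphism on the left is a map of sutured Floer homologies with identical underlying sutured manifold, it preserves the contact class $\EH(\xi)$ up to graded isomorphism, so it suffices to check $\Phi_i(\EH(\xi)) = \EH(\xi_i)$.

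Next, I would apply the $\EH$-naturality of the HKM map. By \fullref{lem:padded handles decomposition}, the handle attachment $(M,\Gamma) \subset (M \cup P_i, \Gamma_i) \cong (M_i,\Gamma_i)$ is a proper inclusion of sutured manifolds whose complement is the padded contact handle $(P_i, -\Gamma \sqcup \Gamma_i, \xi_{h^i})$, where $\xi_{h^i}$ is the $I$-invariant contact structure with the contact $i$-handle attached. Since $\xi_{h^i}$ is contact-compatible with the collar of $\partial M$, the gluing $\xi \cup \xi_{h^i}$ is well-defined and isotopic to $\xi_i$. By the defining property of the HKM map from \cite{HKM3},
\[
\Phi_i(\EH(\xi)) = \EH(\xi \cup \xi_{h^i}) = \EH(\xi_i).
\]
Combining this with the previous step yields $\Psi_i(\EH(\xi)) = \EH(\xi_i)$ up to graded isomorphism, as required.

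I do not expect any of these steps to be an obstacle, since the heavy lifting is all packed into \fullref{lem:one handle maps} and \fullref{lem:two handle maps}, which the statement explicitly tells us we may assume. The only thing to be careful about is that the identification $\Psi_i \cong \Phi_i$ holds only up to graded isomorphism (since $\Psi_\SF$ itself is only defined up to graded homotopy, as noted in \fullref{rmk:zarev naturality}), so the equality $\Psi_i(\EH(\xi)) = \EH(\xi_i)$ must be stated with this same qualifier — which is exactly what the lemma claims.
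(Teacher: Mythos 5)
Your proposal is correct and follows essentially the same route as the paper: identify $\Psi_i$ with the HKM handle map $\Phi_i$ up to graded isomorphism via \fullref{lem:padded handles decomposition}, \fullref{lem:one handle maps}, and \fullref{lem:two handle maps}, then invoke the $\EH$-preservation property of the HKM map. One small precision: \fullref{lem:one handle maps} and \fullref{lem:two handle maps} only identify $\Psi_i$ with the diagrammatic map $(\sigma_i)_*$, and it is \fullref{lem:padded handles decomposition} that promotes this to the identification with $\Phi_i$, but since you cite that lemma anyway your argument assembles the same pieces as the paper's.
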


\begin{proof}
By \fullref{lem:padded handles decomposition}, \fullref{lem:one handle maps}, and \fullref{lem:two handle maps}, the bordered contact gluing map $\Psi_i$ equals the corresponding $\HKM$-map $\Phi_i$ up to graded isomorphism. Since $\Phi_i$ preserves the contact class, so does $\Psi_i$ up to graded isomorphism.
\end{proof}

Now we show that the bordered contact gluing maps for punctured padded handle attachments and ordinary contact handle attachments agree.

\begin{lemma}
\label{lem:padding independence}
Let $(M,\Gamma)$ be a sutured manifold, $h^i$ be a contact $i$-handle, and $(P^L_i,\Gamma^L_i,\xi^L_i)$ be a punctured padded contact $i$-handle. Under the identification $(M,\Gamma)\cong (P^L,\Gamma^L)\cup_{\SF^L}(M,\Gamma)$ the corresponding bordered contact gluing maps
\[
\Psi_i,\Psi_{\xi^L_i}:\SFH(-M,-\Gamma)\to\SFH(-M_i,-\Gamma_i)
\]
are equal up to graded isomorphism.
\end{lemma}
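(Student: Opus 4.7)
The plan is to decompose $\Psi_{\xi^L_i}$ as a composition $\Psi_i\circ\Psi_{\xi_\gamma^L}$ in which the second factor is a graded isomorphism. The punctured padded handle factors along the attaching region $A$ of $h^i$ as $(P^L_i,\Gamma^L_i,\xi^L_i)\cong(P^L,\Gamma^L,\xi_\gamma^L)\cup_A(h^i,\xi_{h^i})$, so a bordered gluing of $(P^L_i,\Gamma^L_i)$ to $(M,\Gamma)$ along $\SF^L$ can be viewed as the successive bordered gluing of three pieces: the handle $h^i$, the punctured padding $P^L$, and $M$.

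First I would invoke the associativity property of the bordered gluing map from \fullref{thm:join map properties} applied to this triple with gluing surfaces $A$ and $\SF^L$. Associativity provides, up to graded isomorphism, the identity
\[
\Psi_{-\SF^L}\bigl(\Psi_{-A}(\alpha\otimes\beta)\otimes\by\bigr) = \Psi_{-A}\bigl(\alpha\otimes\Psi_{-\SF^L}(\beta\otimes\by)\bigr)
\]
for suitable classes $\alpha$, $\beta\in\SFH(-P^L,-\Gamma^L)$, and $\by\in\SFH(-M,-\Gamma)$. Specializing $\alpha=\EH(\xi_{h^i})$ and $\beta=\EH(\xi_\gamma^L)$, the right-hand side becomes $\Psi_i(\Psi_{\xi_\gamma^L}(\by))$ directly from the definition of the bordered contact gluing map. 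Meanwhile, applying \fullref{lem:handle preserves eh} to the contact handle attachment that produces $(P^L_i,\xi^L_i)$ from $(P^L,\xi_\gamma^L)$ yields $\Psi_{-A}(\EH(\xi_{h^i})\otimes\EH(\xi_\gamma^L))=\EH(\xi^L_i)$, so the left-hand side reduces to $\Psi_{\xi^L_i}(\by)$. Together these give $\Psi_{\xi^L_i}=\Psi_i\circ\Psi_{\xi_\gamma^L}$ up to graded isomorphism.

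The next step is to show that $\Psi_{\xi_\gamma^L}$ is itself a graded isomorphism. The underlying sutured manifold $(P^L,\Gamma^L)$ is diffeomorphic to the double of the cap associated to the dividing set $\gamma|_{\SF^L}$ on $\SF^L$, equipped with its $I$-invariant contact structure, so \fullref{lem:zarev identity} applies provided $\gamma|_{\SF^L}$ is disk-decomposable. By construction of the punctured padding, each closed component of $\gamma$ contains at least one of the Legendrian feet $q_k$, so $\gamma|_{\SF^L}$ has no closed components; \fullref{lem:rank one div sets} then guarantees that $\gamma|_{\SF^L}$ is disk-decomposable. Combining with the factorization from the previous paragraph gives $\Psi_{\xi^L_i}=\Psi_i$ up to graded isomorphism, as desired.

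The main subtlety will be verifying that the identifications needed for associativity are compatible with the twisting-slice conventions underlying the bordered gluing operation; in particular, one must check that the dividing sets induced on $A$ and $\SF^L$ by the punctured padding $(P^L,\xi_\gamma^L)$ agree with those coming from $\Gamma$ and $\xi_{h^i}$, so that the two successive bordered gluings recover the honest sutured contact manifold $(M_i,\Gamma_i)$. One should also track carefully that the disk-decomposability hypothesis required by \fullref{thm:join map properties}(3) and \fullref{lem:zarev identity} holds at every interface; in fact this is precisely the technical reason for introducing the punctures into the padding in the first place.
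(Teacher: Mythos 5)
Your proposal is correct and follows essentially the same route as the paper: split the punctured padded handle into $h^i$ and the $I$-invariant structure on the punctured padding, apply associativity of the bordered gluing map and \fullref{lem:handle preserves eh} to obtain $\Psi_{\xi^L_i}=\Psi_i\circ\Psi_{\xi^L_{\Gamma}}$ up to graded isomorphism, and then invoke \fullref{lem:zarev identity} together with \fullref{lem:rank one div sets} to see that the second factor is a graded isomorphism. The paper's proof is exactly this argument, phrased as two commutative squares obtained by evaluating on contact elements.
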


\begin{proof}
Split $(P^L_i,\Gamma^L_i,\xi^L_i)$ into a contact handle $h^i$ and the $I$-invariant contact structure $\xi^L_{\Gamma}$ on the punctured padding $(P^L,\Gamma^L)$. By associativity of the bordered gluing map, the following diagram commutes up to graded isomorphism:

\[
\begin{tikzcd}
\SFH(-h^i)\otimes\SFH(-P^L,-\Gamma^L)\otimes\SFH(-M,-\Gamma)\arrow[r, "\Psi_{-\SF_i}"] \arrow[d, "\Psi_{-\SF^L}"]
&\SFH(-P^L_i,-\Gamma^L_i)\otimes\SFH(-M,-\Gamma) \arrow[d, "\Psi_{-\SF^L}"] \\
\SFH(-h^i)\otimes\SFH(-P^L\cup -M, -\Gamma) \arrow[r, "\Psi_{-\SF_i}" ]
& \SFH(-P^L_i\cup -M,\Gamma_i)
\end{tikzcd}
\]

By \fullref{lem:handle preserves eh}, the bordered gluing map $\Psi_{-\SF_i}$ on the top sends $(\EH(h^i),\EH(\xi^L_{\Gamma}))$ to $\EH(\xi^L_i)$, up to graded isomorphism. Evaluating on the contact elements yields the following commutative diagram:
\[
\begin{tikzcd}
\SFH(-M,-\Gamma)\arrow[r, "\id"] \arrow[d, "\Psi_{\xi^L_{\Gamma}}"]
& \SFH(-M,-\Gamma) \arrow[d, "\Psi_{\xi^L_i}"] \\
\SFH(-P^L\cup -M,-\Gamma) \arrow[r, "\Psi_i" ]
& \SFH(-P^L_i\cup -M,\Gamma_i)
\end{tikzcd}
\]
The map $\Psi_{\xi^L_{\Gamma}}$ is a graded isomorphism by \fullref{lem:zarev identity}, since the dividing set on $\SF^L$ is disk-decomposable by \fullref{lem:rank one div sets}. Since the diagram commutes up to graded isomorphism, the lemma follows.
\end{proof}

We now show that the bordered contact gluing map preserves the contact class.

\begin{lemma}
\label{lem:zarev preserves eh}
Let $(M,\Gamma)$ and $(M'',\Gamma'')$ be sutured manifolds with compatible contact structures $\zeta$ and $\xi$ respectively, and let $(M',\Gamma') = (M'',\Gamma'')\cup_{\SF}(M,\Gamma)$ for some sutured surface $\SF$ with disk-decomposable dividing set. Then
\[
\Psi_{\xi}(\EH(\zeta)) = \EH(\xi\cup_{\SF}\zeta)
\]
up to graded isomorphism.
\end{lemma}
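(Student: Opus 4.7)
The plan is to reduce the statement to the single-handle case already handled by \fullref{lem:handle preserves eh}, by decomposing $\xi$ into elementary contact handle attachments and then invoking associativity of the bordered gluing map.

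First I would choose a convex handle decomposition of $\xi$ relative to a collar of $\SF$, so that $\xi$ is obtained from the $I$-invariant contact structure $\xi_{\gamma}$ on a collar of $\SF$ by attaching a sequence of contact $1$- and $2$-handles $h^{i_1},\ldots,h^{i_n}$. Using the construction from \fullref{rmk:stacking punctured handles}, I would then replace this with a punctured padded handle decomposition: choose Legendrian graphs $L_1,\ldots,L_n$ so that the contact handles can be stacked into sutured contact pieces $(P^{L_1}_{i_1},\Gamma^{L_1}_{i_1},\xi^{L_1}_{i_1}),\ldots,(P^{L_n}_{i_n},\Gamma^{L_n}_{i_n},\xi^{L_n}_{i_n})$ whose concatenation along sutured surfaces $\SF^{L_j}$ reproduces $(M'',\Gamma'',\xi)$. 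By \fullref{lem:rank one div sets}, each dividing set on each $\SF^{L_j}$ is disk-decomposable, so that each intermediate bordered contact gluing map is defined.

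Next, I would iterate associativity from \fullref{thm:join map properties}. Concretely, associativity gives a commutative diagram (up to graded isomorphism) of the form
\[
\begin{tikzcd}[column sep=small]
\bigotimes_j \SFH(-P^{L_j}_{i_j},-\Gamma^{L_j}_{i_j})\otimes \SFH(-M,-\Gamma)\arrow[r,"\Psi"]\arrow[d]
& \SFH(-M'',-\Gamma'')\otimes \SFH(-M,-\Gamma)\arrow[d,"\Psi_{-\SF}"]\\
\bigotimes_{j<n}\SFH(-P^{L_j}_{i_j},-\Gamma^{L_j}_{i_j})\otimes\SFH(\cdots)\arrow[r]
& \SFH(-M',-\Gamma')
\end{tikzcd}
\]
which lets me factor $\Psi_{-\SF}(\EH(\xi),\cdot)$ as the composition $\Psi_{\xi^{L_n}_{i_n}}\circ\cdots\circ\Psi_{\xi^{L_1}_{i_1}}$, provided that the $\EH$-class of $\xi$ is paired with the appropriate classes from the individual punctured padded pieces. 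The point is that each $\EH(\xi^{L_j}_{i_j})$ is exactly the contact class of the punctured padded handle, and repeated application of associativity reduces the pairing with $\EH(\xi)$ to successive pairings with these elementary contact classes.

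Now I would apply \fullref{lem:padding independence} to identify each $\Psi_{\xi^{L_j}_{i_j}}$ with the corresponding bordered contact handle gluing map $\Psi_{i_j}$, up to graded isomorphism. Finally, \fullref{lem:handle preserves eh} tells me that each such $\Psi_{i_j}$ carries the $\EH$-class of the running contact structure to the $\EH$-class after the handle attachment. An induction on $n$ starting from $\EH(\zeta)\in\SFH(-M,-\Gamma)$ then yields
\[
\Psi_{\xi}(\EH(\zeta))=\Psi_{i_n}\circ\cdots\circ\Psi_{i_1}(\EH(\zeta))=\EH(\zeta\cup h^{i_1}\cup\cdots\cup h^{i_n})=\EH(\xi\cup_{\SF}\zeta),
\]
up to graded isomorphism. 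The main obstacle is bookkeeping: one must verify that the choice of handle decomposition, the Legendrian puncturing graphs, and the successive applications of associativity can be threaded together coherently, in particular that the intermediate sutured surfaces at each stage carry disk-decomposable dividing sets so that every map in the composition is well-defined and agrees with the relevant handle-attachment map. Once this organization is set up, each single-handle invocation is exactly \fullref{lem:handle preserves eh}.
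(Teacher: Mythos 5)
Your proposal is correct and is essentially the paper's own argument: the paper likewise decomposes $(M'',\Gamma'',\xi)$ into punctured padded handles, invokes associativity of the bordered gluing map (\fullref{thm:join map properties}) to compare assembling $M''$ first with attaching the pieces to $(M,\Gamma)$ one at a time, and then uses \fullref{lem:padding independence} together with \fullref{lem:handle preserves eh} (with \fullref{lem:rank one div sets} guaranteeing the intermediate dividing sets are disk-decomposable) to evaluate on contact classes. The only cosmetic difference is that the paper packages your induction as a single triangle-shaped diagram commuting up to graded isomorphism, whose vertical arrow sends $\EH(\xi_{i_n}^{L_n})\otimes\cdots\otimes\EH(\xi_{i_1}^{L_1})\otimes\EH(\zeta)$ to $\EH(\xi)\otimes\EH(\zeta)$ and whose diagonal arrow sends it to $\EH(\xi\cup_{\SF}\zeta)$ -- exactly the step you flag with ``provided that the $\EH$-class of $\xi$ is paired with the appropriate classes.''
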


\begin{proof}
Decompose $(M'',\Gamma'')$ into a number of punctured padded handles $(P^{L_1}_{i_1},\Gamma^{L_1}_{i_1})$, \ldots, $(P^{L_n}_{i_n}, \Gamma^{L_n}_{i_n})$
and let us consider two particular compositions of bordered gluing maps
\[
\SFH(-P^{L_n}_{i_n},-\Gamma^{L_n}_{i_n})\otimes\ldots\otimes\SFH(-P^{L_1}_{i_1},-\Gamma^{L_1}_{i_1})\otimes\SFH(-M,-\Gamma)\to \SFH(-M',-\Gamma')
\]
corresponding to gluing in different orders. One composition corresponds to attaching the punctured padded handles sequentially to $(M,\Gamma)$, while another composition corresponds to attaching the punctured padded handles sequentially to $(P^{L_1}_{i_1},\Gamma^{L_1}_{i_1})$ to get $(M'',\Gamma'')$ before finally gluing to $(M,\Gamma)$. By \fullref{thm:join map properties}, these compositions give rise to the following diagram which commutes up to graded isomorphism:
\[
\begin{tikzcd}
\SFH(-P^{L_n}_{i_n},-\Gamma^{L_n}_{i_n})\otimes\ldots\otimes\SFH(-P^{L_1}_{i_1},-\Gamma^{L_1}_{i_1})\otimes\SFH(-M,-\Gamma) \arrow[dr, " "] \arrow[d, " "]\\
\SFH(-M'',-\Gamma'')\otimes\SFH(-M,-\Gamma) \arrow[r, "\Psi_{-\SF}" ]
& \SFH(-M',-\Gamma')
\end{tikzcd}
\]

By \fullref{lem:padding independence} and \fullref{lem:handle preserves eh}, each punctured padded handle attachment map preserves the contact invariant up to graded isomorphism. The vertical arrow thus maps $\EH(\xi_{i_n}^{L_n})\otimes\ldots\otimes\EH(\xi_{i_1}^{L_1})\otimes\EH(\zeta)$ to $\EH(\xi)\otimes\EH(\zeta)$, while the diagonal arrow maps  $\EH(\xi_{i_n}^{L_n})\otimes\ldots\otimes\EH(\xi_{i_1}^{L_1})\otimes\EH(\zeta)$ to $\EH(\xi\cup_{\SF}\zeta)$. Since $\Psi_{\xi}$ is the evaluation of $\Psi_{-\SF}$ on $\EH(\xi)$, it must send $\EH(\zeta)$ to $\EH(\xi\cup_{\SF}\zeta)$ up to graded isomorphism.
\end{proof}

We can finally prove the composition law which we need in our proof of \fullref{thm:main}.

\begin{lemma}
\label{lem:zarev composition}
Let $(M_1,\Gamma_1)$, $(M_2,\Gamma_2)$, and $(M_3,\Gamma_3)$ be sutured manifolds with compatible contact structures $\xi$ on $(M_1,\Gamma_1)$ and $\xi'$ on $(M_2,\Gamma_2)$. Suppose that $\SF$ and $\SF'$ are sutured surfaces with disk decomposable dividing sets such that the bordered gluing $(M_1,\Gamma_1)\cup_{\SF}(M_2,\Gamma_2)\cup_{\SF'}(M_3,\Gamma_3)$ is defined. Then
\[
\Psi_{\xi\cup_{\SF}\xi'} = \Psi_{\xi}\circ\Psi_{\xi'}
\]
up to graded isomorphism.
\end{lemma}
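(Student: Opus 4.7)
The plan is to deduce the composition law directly from the associativity of Zarev's bordered gluing map, part (2) of \fullref{thm:join map properties}, combined with the $\EH$-preservation property \fullref{lem:zarev preserves eh} that we have already established. Apply associativity to the triple $(-M_1,-\Gamma_1)$, $(-M_2,-\Gamma_2)$, $(-M_3,-\Gamma_3)$ along the sutured surfaces $-\SF$ and $-\SF'$. This yields the following square, commuting up to graded isomorphism:
\[
\begin{tikzcd}
\SFH(-M_1)\otimes\SFH(-M_2)\otimes\SFH(-M_3) \arrow[r, "\Psi_{-\SF'}"] \arrow[d, "\Psi_{-\SF}"]
& \SFH(-M_1)\otimes\SFH((-M_2)\cup_{-\SF'}(-M_3)) \arrow[d, "\Psi_{-\SF}"]\\
\SFH((-M_1)\cup_{-\SF}(-M_2))\otimes\SFH(-M_3) \arrow[r, "\Psi_{-\SF'}"]
& \SFH((-M_1)\cup_{-\SF}(-M_2)\cup_{-\SF'}(-M_3))
\end{tikzcd}
\]

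Next, evaluate this square on the element $\EH(\xi)\otimes\EH(\xi')\otimes\by$ for an arbitrary $\by\in\SFH(-M_3,-\Gamma_3)$. Tracing along the top and then the right side, the first step is $\Psi_{-\SF'}(\EH(\xi'),\by) = \Psi_{\xi'}(\by)$, producing $\EH(\xi)\otimes\Psi_{\xi'}(\by)$, and the second step is $\Psi_{-\SF}(\EH(\xi),\cdot) = \Psi_{\xi}(\cdot)$, producing $\Psi_{\xi}\circ\Psi_{\xi'}(\by)$. Tracing down the left side and then across, the first step collapses the first two tensor factors via $\Psi_{-\SF}(\EH(\xi),\EH(\xi'))$, which by \fullref{lem:zarev preserves eh} equals $\EH(\xi\cup_{\SF}\xi')$ up to graded isomorphism, producing $\EH(\xi\cup_{\SF}\xi')\otimes\by$; the second step then applies $\Psi_{-\SF'}(\EH(\xi\cup_{\SF}\xi'),\cdot) = \Psi_{\xi\cup_{\SF}\xi'}(\cdot)$, yielding $\Psi_{\xi\cup_{\SF}\xi'}(\by)$. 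Commutativity of the diagram then gives the claimed identity $\Psi_{\xi\cup_{\SF}\xi'}(\by) = \Psi_{\xi}\circ\Psi_{\xi'}(\by)$ up to graded isomorphism.

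The only point that requires checking is that the hypotheses of \fullref{lem:zarev preserves eh} are met along both $\SF$ and $\SF'$; this is exactly the assumption that each of these sutured surfaces carries a disk-decomposable dividing set, which is preserved under the orientation reversal sending $\SF$ to $-\SF$. In particular, there is no real obstacle here: the composition law is essentially a formal consequence of associativity together with the fact that the $\EH$-classes pair up correctly under the bordered gluing, both of which have already been set up. The one subtlety worth noting is that every appeal to Zarev's associativity and symmetry is modulo graded isomorphism, so the final equality is likewise only claimed up to graded isomorphism, which matches the statement of the lemma.
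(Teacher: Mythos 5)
Your proposal is correct and follows essentially the same route as the paper: apply the associativity square for $\Psi_{-\SF}$ and $\Psi_{-\SF'}$, evaluate on $\EH(\xi)\otimes\EH(\xi')\otimes[\by]$, and use \fullref{lem:zarev preserves eh} to identify the left-then-bottom composite with $\Psi_{\xi\cup_{\SF}\xi'}$. The paper's proof is exactly this argument, recorded via the induced commuting square with $\id$ on the left and $\Psi_{\xi}$, $\Psi_{\xi'}$, $\Psi_{\xi\cup_{\SF}\xi'}$ on the other three sides.
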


\begin{proof}
Write $\M_k = (M_k,\Gamma_k)$. The following diagram commutes up to graded isomorphism by associativity of Zarev's bordered gluing map.
\[
\begin{tikzcd}
\SFH(-\M_1\sqcup -\M_2\sqcup -\M_3)\arrow[r, "\Psi_{-\SF'}"] \arrow[d, "\Psi_{-\SF}"]
& \SFH(-\M_1\sqcup -\M_2\cup_{-\SF'}-\M_3) \arrow[d, "\Psi_{-\SF}"] \\
\SFH(-\M_1\cup_{-\SF}-\M_2\sqcup -\M_3)\arrow[r, "\Psi_{-\SF'}" ]
& \SFH(-\M_1\cup_{-\SF} -\M_2\cup_{-\SF'}-\M_3)
\end{tikzcd}
\]
By \fullref{lem:zarev preserves eh}, the map $\Psi_{-\SF}$ on top sends $\EH(\xi)\otimes\EH(\xi')\otimes[\by]$ to $\EH(\xi\cup_{\SF}\xi')\otimes[\by]$ for any cycle $\by$. By evaluating on contact elements, we obtain the following diagram which also commutes up to graded isomorphism.
\[
\begin{tikzcd}
\SFH(-\M_3)\arrow[r, "\Psi_{\xi'}"] \arrow[d, "\id"]
& \SFH(-\M_2\cup_{-\SF'}-\M_3) \arrow[d, "\Psi_{\xi}"] \\
 \SFH(-\M_3) \arrow[r, "\Psi_{\xi\cup_{\SF}\xi'}" ]
& \SFH(-\M_1\cup_{-\SF}-\M_2\cup_{-\SF'}-\M_3)
\end{tikzcd}
\]

\end{proof}

\section{Background on Computing the Bordered Gluing map} 
\label{sec:computing bordered}

This section contains the background necessary for computing the bordered contact gluing map in the case of a contact handle attachment. We will need this in \fullref{sec:diagrammatic zarev} when identifying the bordered map with the diagrammatic maps of \fullref{sec:simple maps}.

\subsection{Arc diagrams, bordered algebras, and bordered sutured diagrams}
\label{sec:arc diagrams}
Computing the bordered gluing map explicitly requires one to identify a particular generator in a certain Heegaard diagram for the twisting slice $\TW_{-\SF,+}$; see \fullref{lem:elementary join}. This section is devoted to the background necessary to construct this diagram and identify the generator. Those familiar with Auroux-Zarev diagrams or willing to take this identification on faith may skip to \fullref{sec:proof of the main theorem} and simply refer to \fullref{fig:twisting slice diagram} and \fullref{lem:elementary join} when proving \fullref{lem:two handle maps}. We first review arc diagrams and describe the bordered algebras associated to contact handles.

\begin{figure}
\labellist
	\begin{footnotesize}
	\pinlabel $-$ at 114 106
	\pinlabel $+$ at 114 56
	\pinlabel $-$ at 114 49
	\pinlabel $+$ at 114 0
	
	\pinlabel $-$ at 370 108
	\pinlabel $+$ at 370 42
	\pinlabel $-$ at 370 33
	\pinlabel $+$ at 370 0
	\pinlabel $a_1$ at 250 43
	\pinlabel $a_2$ at 255 75
	\pinlabel $\rho_1$ at 300 65
  	\pinlabel $\rho_2$ at 300 85
    \end{footnotesize}
\endlabellist
\includegraphics[width=1.0\textwidth]{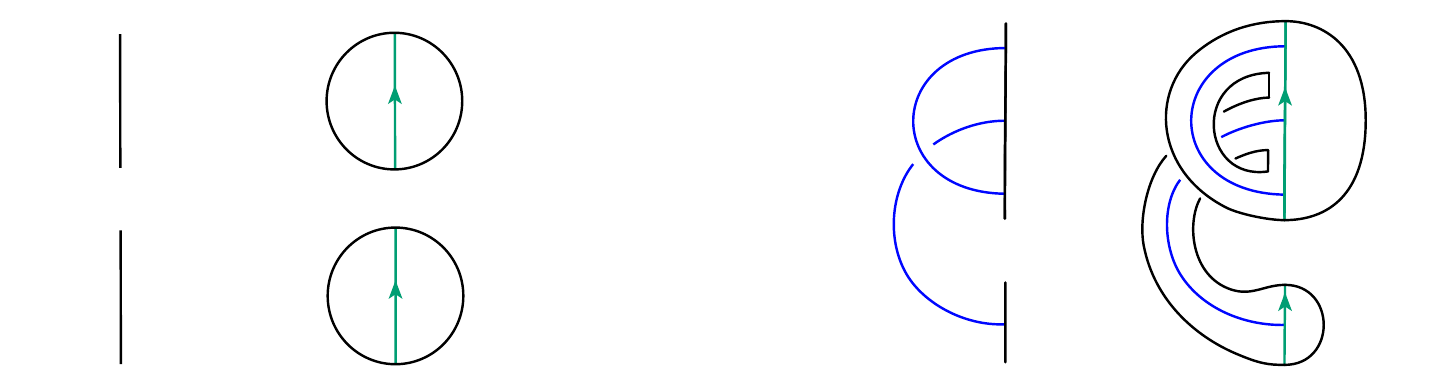}
\caption{Arc diagrams $\SZ_1$ (Far left) and  $\SZ_2$ (Middle right) for contact 1-and 2-handles. The sutured surfaces $\SF_1$ (Middle left) and $\SF_2$ (Far right)  they parametrize are also depicted. Note that these diagrams have $\alpha$-type, though we have drawn the arcs blue since we will be working with Heegaard diagrams of the form $(\Sigma,\bbeta,\balpha)$.}
\label{fig:arc diagram}
\end{figure}

An \textit{arc diagram} $\SZ$ is a union of oriented intervals $\BZ = \sqcup Z_i$, an even number of marked points $\{z_j\}\subset \INT(\BZ)$, and a 2-to-1 function $\mu: \{z_j\}\to \Z_{>0}$. Points $z_j$ and $z_k$ are said to be \textit{matched} if $\mu(z_j) = \mu(z_k)$. We require that the result of surgery on $\BZ$ along all pairs of matched points $\mu^{-1}(l)$ have no closed components. An oriented subinterval of $\BZ$ from one marked point to another (not necessarily matched) marked point is called a \textit{Reeb chord}. Arc diagrams also come with a type: $\alpha$-type or $\beta$-type. Given $\SZ$, there is an arc diagram $-\SZ$ obtained by changing the orientation of each $Z_i$, and another arc diagram $\overline{\SZ}$ obtained by changing the type.

The \textit{graph} $G(\SZ)$ of an arc diagram is the ribbon graph obtained by attaching arcs $\{a_l\}$ to $\BZ$ so that $\partial a_l = \mu^{-1}(l)$. If $\SZ$ is $\alpha$-type, the arcs are attached on the left with respect to the orientation on the intervals $Z_i$; if $\SZ$ is $\beta$-type, the arcs are attached on the right. The graph $\overline{G(\SZ)}$ is obtained by reversing the cyclic ordering around each vertex in $\cup\partial a_l$. Alternatively, $\overline{G(\SZ)}$ is $G(\overline{\SZ})$ as a ribbon graph.

An arc diagram $\SZ$ \textit{parametrizes} a sutured surface $\SF = (F,\Lambda)$ if there is an embedding $G(\SZ)\subset F$ such that $\partial (\cup Z_i) = \Lambda$ as oriented manifolds, and $F$ deformation retracts onto $G(\SZ)$. If $\SZ$ parametrizes $\SF$, then $-\SZ$ parametrizes $-\SF$ and $\overline{\SZ}$ parametrizes $\overline{\SF}$.

The bordered algebra $A = A(\SZ)$ associated to an arc diagram $\SZ$ is generated as an $\F_2$-vector space by certain formal sums of strand diagrams; multiplication corresponds to concatenation of strand diagrams. $A$ is a differential graded algebra over the ground ring $I\subset A$ consisting of idempotents of $A$.

There is a preferred basis of idempotents which is in one-to-one correspondece with collections of arcs of $\SZ$; we denote the basis idempotent corresponding to $\{a_{i_1},\ldots,a_{i_p}\}$ by $\iota_{i_1,\ldots,i_p}$. The basis of idempotents can be extended to a basis of $A$ by adding algebra elements associated to collections of Reeb chords; we will abuse notation and write $\rho$ for the algebra element associated to a single Reeb chord $\rho$. (We will not need to consider algebra elements associated to collections of multiple Reeb chords.)

The algebra and the idempotents split as $A = \bigoplus_i A(i)$ and $I = \bigoplus_i I(i)$, each indexed by the number of strands in the associated diagrams. (Note that we are following Zarev's convention for indexing summands of the algebra, which deviates from the convention in \cite{LOT15}.) Since we only need to understand the bordered algebras for the arc diagrams we have chosen for contact 1-and 2-handles, we omit the general definition here, and explicitly describe the salient features of the algebras we need. The interested reader can find formal definitions in \cite{LOT15} and \cite{Zar11}. Note that the bordered algebras $A(-\SZ)$ and  $A(\overline{\SZ})$ are isomorphic to the opposite algebra $A(\SZ)^{op}$, which is $A(\SZ)$ with multiplication reversed.

Let $\SZ_1$ be the arc diagram on the left of \fullref{fig:arc diagram} which parametrizes the sutured surface $\SF_1 = (F_1,\Lambda_1)$ for a contact 1-handle. The associated bordered algebra $A_1 = A(\SZ_1)$ has rank 1; the only non-trivial summand is $A_1(0) = I_1(0)$, with basis element the idempotent $\iota_\varnothing$ corresponding to the strand diagram with no strands.

\begin{figure}
\labellist
	\begin{footnotesize}
	\pinlabel $\iota_1$ at 95 0
	\pinlabel $\iota_2$ at 155 0
	\pinlabel $\rho_1$ at 215 0
  	\pinlabel $\rho_2$ at 275 0
  	\pinlabel $\rho_{12}$ at 335 0
    \end{footnotesize}
\endlabellist
\includegraphics[width=1.0\textwidth]{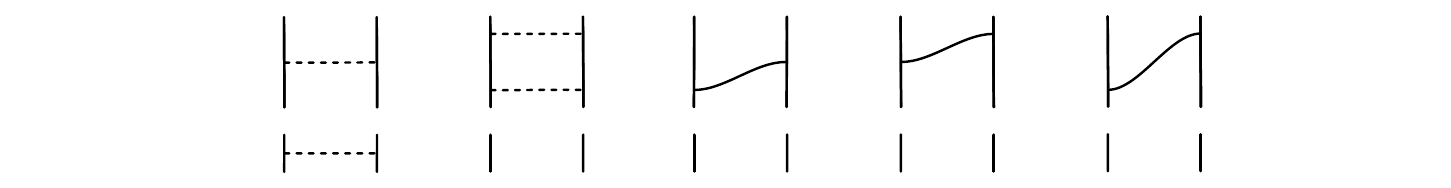}
  \caption{Strand diagrams corresponding to basis elements of $A_2(1)$. The two leftmost diagrams correspond to basis idempotents, while the others correspond to Reeb chords. Multiplication is given by concatenating, i.e. $\rho_1\cdot\rho_2 = \rho_{12}$.}
  \label{fig:two handle strand diagrams}
\end{figure}

Let $\SZ_2$ be the arc diagram on the right of \fullref{fig:arc diagram} which parametrizes the sutured surface $\SF_2 = (F_2,\Lambda_2)$ for a contact 2-handle. The associated bordered algebra $A_2 = A(\SZ_2)$ has rank 9 with three non-trivial summands. Just as for $\SZ_1$, the summand $A_2(0) = I_2(0)$ has rank 1, with basis element the idempotent $\iota_\varnothing$ corresponding to the strand diagram with no strands. The summand $A_2(1)$ has rank 5, with basis elements corresponding to the strand diagrams depicted in \fullref{fig:two handle strand diagrams}. The ground ring $I_2(1)$ is generated by the basis idempotents $\iota_1,\iota_2$. Each of these corresponds to a formal sum of two strand diagrams, each such sum is represented by a superposition of the two diagrams with dashed lines. The nontrivial actions by idempotents are
\[
\iota_2\cdot\rho_1\cdot\iota_1 = \rho_1 \qquad \iota_1\cdot\rho_2\cdot\iota_2 = \rho_2 \qquad \iota_2\cdot\rho_{12}\cdot\iota_2 = \rho_{12},
\]
while the only other non-trivial multiplication is $\rho_1\cdot\rho_2 = \rho_{12}$. Multiplication is given by concatenating strand diagrams. The summand $A_2(2)$ has rank 3; it does not appear in our computations.

Furthermore, each algebra is equipped with a grading and a differential; we will not need to consider either.

Suppose $\SZ^{\alpha}$ is an $\alpha$-type arc diagram and $\SZ^{\beta}$ is a $\beta$-type arc diagram. A \textit{bordered sutured Heegaard diagram} $\HD = (\Sigma,\balpha,\bbeta,\SZ^{\alpha},\SZ^{\beta})$ consists of 
\begin{itemize}
\item an oriented surface $\Sigma$ without closed components
\item a collection $\balpha = \balpha^c\cup\balpha^a$  of pairwise disjoint, properly embedded curves $\balpha^c$ and arcs $\balpha^a$ in $\Sigma$
\item another collection $\bbeta = \bbeta^c\cup\bbeta^a$ of pairwise disjoint, properly embedded curves $\bbeta^c$ and arcs $\bbeta^a$ in $\Sigma$
\item embeddings $(G(\SZ^{\alpha}), \BZ^{\alpha})\subset (\Sigma,\partial\Sigma)$ and  $(\overline{G(\SZ^{\beta})},\BZ^{\beta})\subset (\Sigma,\partial\Sigma)$ such that $\BZ^{\alpha}$ and $\BZ^\beta$ are disjoint, the orientation on each component of $\BZ^\alpha\sqcup \BZ^\beta$ agrees with the orientation on $\partial\Sigma$, the arcs of $G(\SZ^{\alpha})$ agree with the arcs of $\balpha^a$, the arcs of $G(\SZ^{\beta})$ agree with the arcs of $\bbeta^a$, and no component of $\Sigma\setminus\balpha$ or $\Sigma\setminus\bbeta$ is disjoint from $\partial\Sigma\setminus(\BZ^\alpha \sqcup \BZ^\beta)$.
\end{itemize}

Let $A = A(\SZ^{\alpha})$ and $B=A(\SZ^{\beta})$ be the bordered algebras with respective subrings of idempotents $I^\alpha\subset A$, $I^\beta\subset B$. We can choose to consider the bordered module $\BSAA(\HD)$ as an $(I^\alpha)^{op}, I^{\beta}$--bimodule generated by tuples of intersection points which occupy each curve in $\balpha^c\cup\bbeta^c$ exactly once and each arc in $\balpha^a\cup\bbeta^a$ at most once. If a generator $\bx$ occupies arcs which correspond to $a_{i_1},\ldots, a_{i_p}\subset G(\SZ^\alpha)$ and $b_{j_1},\ldots, b_{j_q}\subset \overline{G(\SZ^\beta)}$, then we have the algebra action $\iota_{{i_1},\ldots, {i_p}}\cdot\bx \cdot \iota_{{j_1},\ldots, {j_q}} = \bx$. Since these are the unique basis idempotents which act nontrivially on $\bx$ we will also write this equation as $\iota_L(\bx)\cdot \bx \cdot \iota_R(\bx) = \bx$.

The differential and $A_{\infty}$-algebra actions are determined by counting certain pseudoholomorphic curves. The $A_{\infty}$-module actions are maps
\[
m_{i|1|j}: ( A^{op})^{\otimes i}\otimes \BSAA(\HD) \otimes  B^{\otimes j}\to \BSAA(\HD)
\]
which satisfy certain relations, where each tensor product is over $(I^\alpha)^{op}$ or $I^{\beta}$ as appropriate.

One similarly defines Type-A and Type-D structures $\BSA$, $\BSD$ for diagrams of the form $(\Sigma,\balpha,\bbeta,\SZ^\alpha,\varnothing)$ or $(\Sigma,\balpha,\bbeta,\varnothing,\SZ^\beta)$. Just as for $\BSAA$, these invariants are generated as $\F_2$-vector spaces over tuples of intersection points which occupy each full curve exactly once and each arc in at most once.

A bordered sutured diagram is $\textit{nice}$, if every component of $\Sigma\setminus(\balpha\cup\bbeta)$ is one of the following:
\begin{enumerate}
\item a region with non-trivial intersection with $(\partial \Sigma)\setminus(\BZ^\alpha\sqcup \BZ^\beta)$; referred to as a $\textit{basepoint region}$ or $\textit{suture region}$
\item a bigon with no side in $\BZ^\alpha\sqcup \BZ^\beta$
\item a quadrilateral with at most one side in $\BZ^\alpha\sqcup \BZ^\beta$.
\end{enumerate}

In this case the differential of $\BSAA(\HD)$ is given by counting each region which is an embedded bigon or rectangle in the interior of the Heegaard surface, and which is disjoint from the basepoint regions. The $A_{\infty}$-module actions are given by counting each region which is an embedded rectangle with one side a Reeb chord $\rho$ in $\BZ^\alpha\sqcup \BZ^\beta\subset \partial\Sigma$ and the other three sides in $\INT(\Sigma)$. More precisely, let $\bx = (x_0, z_1,\ldots,z_n)$ and $\by = (y_0, z_1,\ldots,z_n)$ be generators in a nice bordered diagram $\HD$ with basis idempotent actions $\iota_L(\bx)\cdot\bx\cdot\iota_R(\bx) = \bx$ and $\iota_L(\by)\cdot\by\cdot\iota_R(\by) = \by$, and let $\rho$ be a Reeb chord in $\BZ^\alpha$. Suppose there is a rectangular region $D$ which has corners $\{x_0,y_0\}\cup\partial\rho$, goes out of $x_0$, goes into $y_0$, and has no $z_k$ in its interior. Then $D$ contributes a $\by$ term to the action $m_{0|1|1}(\bx,\iota_L(\bx)\cdot\rho\cdot\iota_R(\by))$. If $\rho$ is a chord in $\BZ^{\beta}$ instead, then $D$ contributes a $\by$ term to the action $m_{1|1|0}(\iota_L(\by)\cdot\rho\cdot\iota_R(\bx),\bx)$. The only other non-trivial actions in either case come from compatible idempotents, i.e. $m_{0|1|1}(\bx,\iota_R(\bx)) = m_{1|1|0}(\iota_L(\bx),\bx) = \bx$.

As an example, we explicitly compute some actions of the $\A_{\infty}$-bimodule $\BSAA(\HD_{AZ})$ associated to the nice diagram $\HD_{AZ} = (\Sigma, \bbeta,\balpha, \overline{\SZ}_2 \sqcup \SZ_2)$ on the right hand side of \fullref{fig:twisting slice diagram}. We will give $\BSAA(\HD_{AZ})$ a right action by $A(-\SZ_2)\approx A(\SZ_2)^{op}$ and a left action by $A(\overline{\SZ}_2)\approx A(\SZ_2)^{op}$. Note that the usual roles of $\alpha$ and $\beta$ curves are reversed and $\SZ_2$ has $\alpha$-type.

The region $D_1$ contributes the action $m_{1|1|0}(\rho_1,z_1) = \rho_1\cdot z_1 = z_4$. The region $D_1 \cup D_2$ contributes the action $m_{1|1|0}(\rho_{12},z_1) = \rho_{12} \cdot z_1 = z_3$. The region $D_5$ contributes the action $m_{0|1|1}(z_2,\rho_1) = z_2 \cdot \rho_1 = z_3$. The region $D_3\cup D_4$ contributes $m_{0|1|1}(z_4,\rho_2) = z_4\cdot \rho_2 = z_5$. The region $D_1\cup D_3\cup D_4$ contributes no action, since it has two sides which hit the boundary.

In fact, $\BSAA(\HD_{AZ})$ is isomorphic to the dual algebra $A(-\SZ_2)^{\vee}$ as an $\A_{\infty}$-bimodule; see page 122 of \cite{Zar11} for the general relationship between the actions of a bimodule $M$ and its dual $M^{\vee}$. This can be seen by identifying $z_1$ with $\rho_{12}^{\vee}$, $z_2$ with $\rho_{1}^{\vee}$, $z_3$ with $\iota_{2}^{\vee}$, $z_4$ with $\rho_{2}^{\vee}$, and $z_5$ with $\iota_{1}^{\vee}$. Under this identification, the actions described above are dual to the multiplications $\rho_2\cdot\rho_1 = \rho_{12}$, $\iota_2\cdot\rho_{12}  = \rho_{12}$, $\rho_1\cdot\iota_2 = \rho_1$, and $\rho_2\cdot\iota_1 = \rho_2$ in $A(-\SZ_2)\approx A(\SZ_2)^{op}$. The reader may verify that the remaining actions of $\BSAA(\HD_{AZ})$ and $A(-\SZ_2)^{\vee}$ correspond exactly.

We will see in the next section that understanding this identification is necessary for computing the bordered gluing map.

\subsection{Computing the bordered gluing map with Heegaard diagrams}
\label{sec:computing the pairing}
In this section we discuss how to compute the bordered gluing map in the special case of contact handle attachments.

Let $(\W,-\SZ) = (W,\gamma,-\SF,-\SZ)$ be the bordered cap for a sutured surface $\SF= (F,\Lambda)$, where $\SF$ is parametrized by $\SZ$. Now, suppose that sutured manifolds $(M_1,\Gamma_1)$ and $(M_2,\Gamma_2)$ can be glued along $\SF$; this is equivalent to saying that there are embeddings $(F,\Lambda)\subset (\partial M_1,\Gamma_1)$ and  $(-F,\Lambda)\subset (\partial M_2,\Gamma_2)$ which extend to embeddings $\W\subset (M_1,\Gamma_1)$ and $\overline{\W}\subset (M_2,\Gamma_2)$. Then $(M_1,\Gamma_1)\setminus \W$ and $(M_2,\Gamma_2)\setminus -\W$ are partially sutured manifolds with sutured surfaces $\SF$ and $\overline{\SF}$, so that $((M_1,\Gamma_1)\setminus \W,\SZ)$ and $((M_2,\Gamma_2)\setminus \overline{\W},\overline{\SZ})$ are bordered sutured manifolds. Note that the positive twisting slice $(\TW_{\SF,+},-\SZ\sqcup -\overline{\SZ})$ is also a bordered sutured manifold.

Choose bordered sutured Heegaard diagrams $\HD_W$ for $(\W,-\SZ)$, $\HD_U$ for $((M_1,\Gamma_1)\setminus \W,\SZ)$, and  $\HD_V$ for $((M_2,\Gamma_2)\setminus \overline{\W},\overline{\SZ})$. These determine Type-D structures $U = \BSD(\HD_U)$ and $V = \BSD(\HD_V)$, along with a Type-A structure $W = \BSA(\HD_W)$. The mirror image $\HD_{-\W}$ of $\HD_{\W}$ is a Heegaard diagram for $\overline{\W}$ and the corresponding Type-A structure is $\BSA(\HD_{-W}) = W^{\vee}$. Let $A = A(\SZ)$. There is a nice diagram $\HD_{AZ}$ for $(\TW_{\SF,+},-\SZ\sqcup -\overline{\SZ})$, discovered independently by Auroux and Zarev, such that $\BSAA(\HD_{AZ}) \cong A^{\vee}$ as $\A_{\infty}$-bimodules over $A$. The Auroux-Zarev diagrams associated to contact handle attachments are depicted in \fullref{fig:twisting slice diagram}; see Proposition 12.4.2 in \cite{Zar11} for the general construction as well as \cite{Aur10} and \cite{LOT11} for computations.

Now, we have
\[
\SFC(M_1,\Gamma_1)\otimes\SFC(M_2,\Gamma_2) \simeq U \boxtimes W \otimes W^{\vee}\boxtimes V,\text{ and}
\]
\[
\SFC((M_1,\Gamma_1)\cup_{\SF}(M_2,\Gamma_2)) \simeq U\boxtimes A^{\vee}\boxtimes V.
\]
With these identifications, \cite{Zar11} gives an explicit formula for computing the bordered gluing map when the bordered module $W = \BSA(\W)$ has a single generator and trivial structure maps; in this case $W$ is called \textit{elementary}. Note that if $\HD = (\Sigma,\balpha,\bbeta,\SZ^\alpha,\SZ^\beta)$ is a bordered sutured diagram with a single generator and all regions adjacent to the intervals $\BZ^\alpha \sqcup \BZ^\beta\subset \partial\Sigma$ are basepoint regions, then the bordered modules for $\HD$ are elementary.

\begin{lemma}(Zarev)
\label{lem:elementary join}
Let $U,V,W,A$ be as above. Suppose that $W$ is an elementary module with unique generator $w$. The bordered gluing map $\Psi_{\SF}$ is induced by the map $\psi_{\SF}:U \boxtimes W \otimes W^{\vee}\boxtimes V\to U\boxtimes A^{\vee}\boxtimes V$ defined by
\[
\psi_{\SF}( \mathbf{u}\boxtimes w\otimes w^{\vee}\boxtimes \mathbf{v}) = \mathbf{u}\boxtimes\iota_L(w)^{\vee}\boxtimes \mathbf{v},
\]
where $\mathbf{u}\in U,\mathbf{v}\in V$ are arbitrary and $\iota_L(w)$ is the basis idempotent in $A$ which acts non-trivially on $w$.
\end{lemma}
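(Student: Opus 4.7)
The plan is to unpack Zarev's definition of $\Psi_{\SF}$ and reduce it to a direct computation in the elementary case. Recall that $\Psi_{\SF}$ arises from the pairing theorem applied to the twisting slice $\TW_{\SF,+}$: on bordered invariants the gluing operation corresponds to replacing $\BSA(\W)\otimes\BSA(\overline{\W}) = W\otimes W^{\vee}$ with the Auroux--Zarev bimodule $\BSAA(\HD_{AZ}) \cong A^{\vee}$. Zarev's construction provides a natural chain map of $(A,A)$-bimodules $W \otimes W^{\vee} \to A^{\vee}$ coming from the inclusion of the pair of caps $\W \cup \overline{\W}$ into the twisting slice; the bordered gluing map $\Psi_{\SF}$ is then the one obtained by applying $U \boxtimes (-) \boxtimes V$. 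So the first step is to identify this natural map on the generator $w \otimes w^{\vee}$, after which the $\boxtimes$-functoriality will give the full statement.

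Next, I would exploit the elementary structure to make this identification concrete. Since $W$ has a single generator $w$ with trivial $A_{\infty}$-structure maps, as a Type-A module it is isomorphic to the idempotent summand of the ground ring corresponding to $\iota_L(w)$, regarded as a trivial $A_{\infty}$-module (that is, $m_j = 0$ for $j \geq 3$, while $m_2$ only records the idempotent action). Dually, $W^{\vee}$ is the corresponding elementary Type-A module on the opposite side with generator $w^{\vee}$. Consequently, $W \otimes W^{\vee}$ is one-dimensional, spanned by $w \otimes w^{\vee}$, and on the diagonal $I \otimes I^{\mathrm{op}} \subset W \otimes W^{\vee}$ Zarev's natural map restricts to the canonical inclusion $\iota \otimes \iota \mapsto \iota^{\vee}$, so $w \otimes w^{\vee} \mapsto \iota_L(w)^{\vee}$.

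Applying $U \boxtimes (-) \boxtimes V$ to the map $w \otimes w^{\vee} \mapsto \iota_L(w)^{\vee}$ will then yield precisely the formula $\psi_{\SF}(\mathbf{u} \boxtimes w \otimes w^{\vee} \boxtimes \mathbf{v}) = \mathbf{u} \boxtimes \iota_L(w)^{\vee} \boxtimes \mathbf{v}$. The main obstacle I anticipate is rigorously verifying that Zarev's geometric gluing map really does induce the natural algebraic map $W \otimes W^{\vee} \to A^{\vee}$ described above; this requires unpacking the isomorphism $\BSAA(\HD_{AZ}) \cong A^{\vee}$ explicitly enough to pin down which generator of $\HD_{AZ}$ corresponds to $\iota_L(w)^{\vee}$, using the type of correspondence illustrated in the explicit example preceding the lemma. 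Once that bookkeeping is in place, the remaining computation of the $\boxtimes$-product collapses precisely because of the elementarity of $W$: no higher $A_{\infty}$-terms can contribute, and only the single generator survives.
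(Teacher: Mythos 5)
The paper does not actually prove this lemma: it is stated with the attribution ``(Zarev)'' and imported wholesale from \cite{Zar11}, where the gluing map is defined as $\id_U\boxtimes\nabla_W\boxtimes\id_V$ for Zarev's algebraic join $\nabla_W\colon W\otimes W^{\vee}\to A^{\vee}$ (this is exactly how the present paper later uses it in the proof of the bordered-HKM corollary). So there is no internal argument to compare with; the relevant comparison is with Zarev's construction itself. Measured against that, your outline follows the right route: identify the gluing map with a bimodule map $W\otimes W^{\vee}\to A^{\vee}$ via the pairing theorem and the Auroux--Zarev identification $\BSAA(\HD_{AZ})\cong A^{\vee}$, observe that elementarity makes $W\otimes W^{\vee}$ one-dimensional, and then push the computation through $U\boxtimes(-)\boxtimes V$ using that no higher $A_\infty$-terms can contribute.

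The weak point is that the only substantive step --- that the join sends $w\otimes w^{\vee}$ to $\iota_L(w)^{\vee}$ --- is asserted (``restricts to the canonical inclusion $\iota\otimes\iota\mapsto\iota^{\vee}$'') rather than derived. That identity \emph{is} the content of the lemma in the elementary case, so as a self-contained proof your sketch is circular at its core; it only becomes a proof once you write down Zarev's explicit formula for $\nabla_W$ (a sum over ways of feeding algebra elements into the structure maps of $W$ and $W^{\vee}$) and check that for an elementary module the unique surviving term is the idempotent one. Relatedly, you describe the map $W\otimes W^{\vee}\to A^{\vee}$ as ``coming from the inclusion of the pair of caps into the twisting slice''; in Zarev's development this geometric description is a theorem about the algebraically defined join, not a definition, so using it as the starting point risks assuming the gluing theorem you are trying to compute with. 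Since the paper itself simply cites Zarev here, your proposal is best read as a correct reduction to Zarev's formula rather than an independent proof --- acceptable in spirit, but the acknowledged ``main obstacle'' is precisely the lemma's content, not bookkeeping.
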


\begin{figure}
\labellist

	\pinlabel $\rho_1$ at 230 125
	\pinlabel $\rho_2$ at 230 100  

	\begin{footnotesize}
	\pinlabel $z_4/\rho_2^{\vee}$ at 268 115
	\pinlabel $z_1/\rho_{12}^{\vee}$ at 325 140
	\pinlabel $z_5/\iota_1^{\vee}$ at 315 124
  	\pinlabel $z_2/\rho_1^{\vee}$ at 382 115
    \pinlabel $z_3/\iota_2^{\vee}$  at 325 97
    \end{footnotesize}

    \pinlabel $D_1$ at 253 125
    \pinlabel $D_2$ at 253 98
    \pinlabel $D_3$ at 340 121
    \pinlabel $D_4$ at 394 125
    \pinlabel $D_5$ at 394 98
    
    \pinlabel $\rho_2$ at 415 125
	\pinlabel $\rho_1$ at 415 100   
\endlabellist
\includegraphics[width=1.0\textwidth]{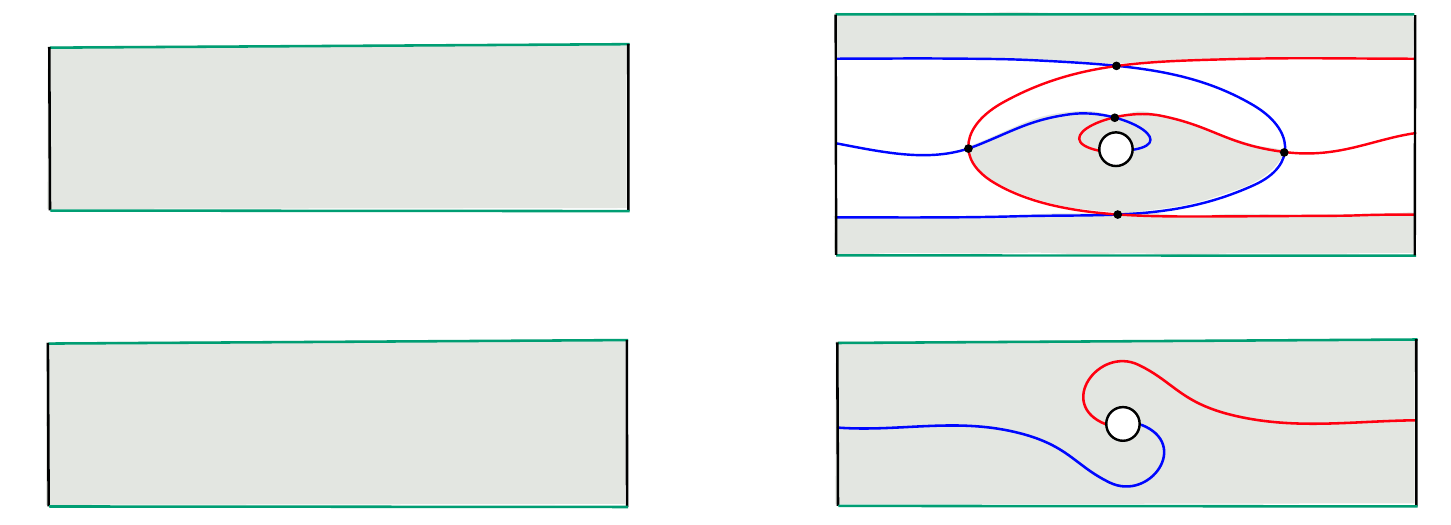}
  \caption{Left: $\HD_{AZ}$ for $\SZ_1$; Right: $\HD_{AZ}$ for $\SZ_2$. Each point is additionally labeled by the basis element it corresponds to under the identification $\BSAA(\HD_{AZ}) \cong A(-\SZ_2)^{\vee}$. The orientations of the Reeb chords agrees with the boundary orientation, i.e. the chords on the right are oriented upward, while the chords on the left are oriented downward.}
  \label{fig:twisting slice diagram}
\end{figure}

\begin{figure}
\includegraphics[width=1.0\textwidth]{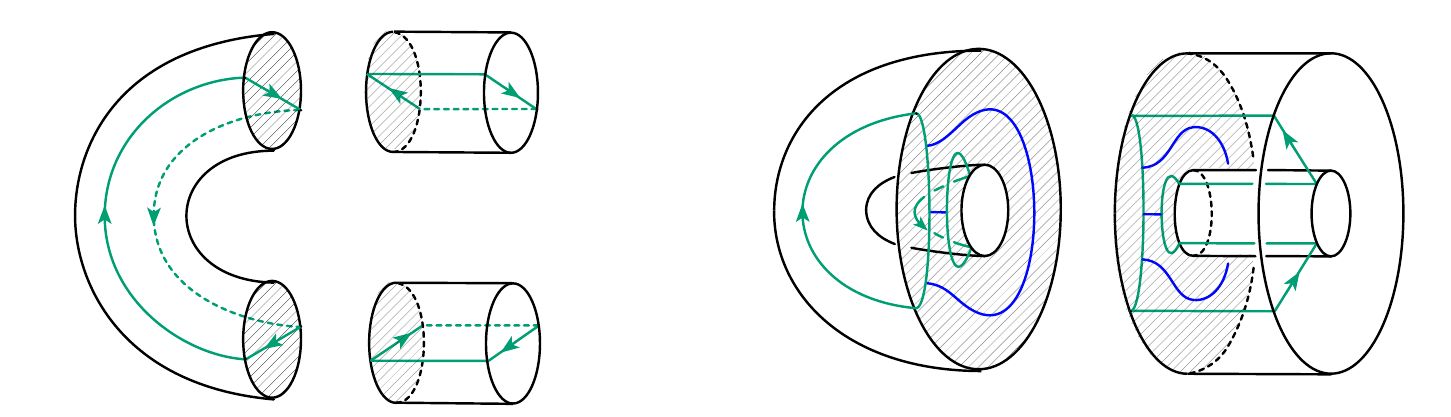}
\caption{Decomposition of a contact 1-handle (Left) and a contact 2-handle (Right) into a cap and a bordered contact handle. The bordered part is shaded and parametrizing arc diagrams are depicted.}
\label{fig:bs caps and handles}
\end{figure}

\fullref{fig:bs caps and handles} shows how to decompose each contact handle $-h^i$ into a bordered cap $(-\W_i,\SZ_i)$ and a bordered contact handle $(-h^i\setminus-\W_i,-\SZ_i)$, along with the parametrizations of $\SF_i$ and $-\SF_i$ in each case. \fullref{fig:bordered contact handle diagrams} shows the diagrams $\HD_U\cup\HD_W$ corresponding to this decomposition. Suppose that $\HD$ is a Heegaard diagram for the manifold $(-M,-\Gamma)$ to which the contact handle $-h^i$ is attached. After some Heegaard moves, we may arrange that a neighborhood of the attaching region in $-M$ be represented by a copy of $\HD_{-W}$ embedded in $\HD$, so that $\HD = \HD_{-W}\cup\HD_V$. To attach $-h^i$, one forms the concatenation $\HD_U\cup \HD_{AZ}\cup\HD_V$. Note that, for each contact handle, the bordered module $\BSA(\HD_W)$ is elementary, so that we can use \fullref{lem:elementary join} for computations.

\begin{figure}
\labellist
	\begin{footnotesize}
 
  \pinlabel $e$ at 108 32
  \pinlabel $w$ at 290 20
  \pinlabel $c$ at 270 32
  	\end{footnotesize}
\endlabellist
\includegraphics[width=1.0\textwidth]{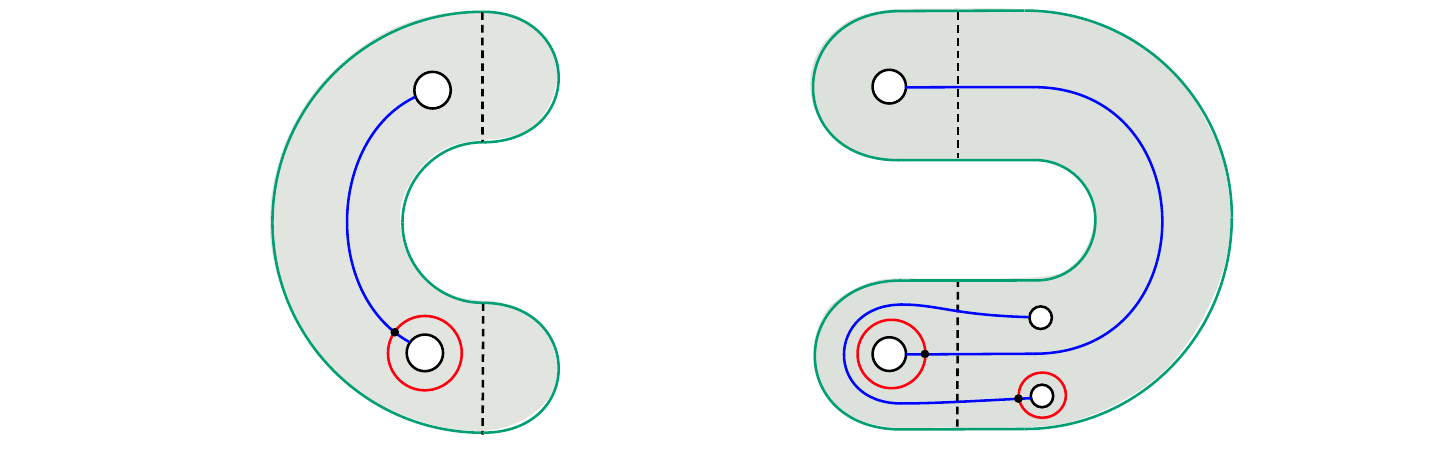}

\caption{$\HD_{U}\cup\HD_{W}$ for a contact 1-handle (Left) and a contact 2-handle (Right).}
  \label{fig:bordered contact handle diagrams}
\end{figure}

\section{Proof of the Main Theorem} 
\label{sec:proof of the main theorem}

The goal of this section is the proof of \fullref{thm:main}. We will start by relating the diagrammatic and bordered contact gluing maps for contact handles.

\subsection{Diagrammatic and bordered contact gluing maps}
\label{sec:diagrammatic zarev}

\begin{lemma}
\label{lem:one handle maps}
Given an admissible diagram $\HD$ for $(-M,-\Gamma)$, there is a diagrammatic handle attachment $\HD'$ for $(-M_1,-\Gamma_1)$ such that the diagrammatic map $\sigma_1: \SFC(\HD)\to\SFC(\HD')$ induces the bordered contact gluing map $\Psi_1:\SFH(-M,-\Gamma)\to\SFH(-M_1,-\Gamma_1)$ on homology.
\end{lemma}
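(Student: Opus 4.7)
The strategy is to compute $\Psi_1$ explicitly via the Auroux--Zarev framework of \fullref{sec:computing bordered} and match it with $\sigma_1$ on the nose.

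First, since the attaching region of the contact $1$-handle is a standard neighborhood of two points of $-\Gamma$, I would use elementary Heegaard moves to isotope $\HD$ so that a neighborhood of the attaching region contains an embedded copy of the mirror bordered-cap diagram $\HD_{-W_1}$ of the left half of \fullref{fig:bordered contact handle diagrams}. This yields a bordered decomposition $\HD = \HD_{-W_1} \cup \HD_V$, where $\HD_V$ parametrizes the partially sutured manifold $((-M)\setminus \overline{\W}_1, \overline{\SZ}_1)$. The key obstacle, to be addressed first, is verifying that admissibility can be preserved while performing this normalization.

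Next, I form the concatenated diagram $\HD' = \HD_U \cup \HD_{AZ} \cup \HD_V$, which by construction is a bordered--sutured diagram for $(-M_1,-\Gamma_1)$ realizing the box-tensor decomposition $U \boxtimes A_1^\vee \boxtimes V$. Inspecting the left halves of \fullref{fig:bordered contact handle diagrams} and \fullref{fig:twisting slice diagram}, one sees that $\HD'$ is obtained from $\HD$ by attaching a $2$-dimensional $1$-handle to $\Sigma$ at the two points where the attaching region meets the dividing set, and that no new $\alpha$- or $\beta$-curves are introduced, since $\SZ_1$ has no arcs. Hence $\HD'$ is precisely a diagrammatic $1$-handle attachment in the sense of \fullref{def:simple maps}.

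Finally, I apply \fullref{lem:elementary join}. The bordered cap module $W = \BSA(\HD_W)$ has a unique generator $w$ with trivial structure maps, so it is elementary. Since $\SZ_1$ has no arcs, the algebra $A_1$ is generated by the single idempotent $\iota_\varnothing$, forcing $\iota_L(w) = \iota_\varnothing$. Writing $\EH(h^1) = \mathbf{u}_0 \boxtimes w$ under the identification of the $\SFC$ of the contact $1$-handle with $U \boxtimes W$, and identifying a cycle representative $\by$ of a class in $\SFH(-M,-\Gamma)$ with $w^\vee \boxtimes \mathbf{v}$, the formula of \fullref{lem:elementary join} gives
\[
\Psi_1([\by]) = [\mathbf{u}_0 \boxtimes \iota_\varnothing^\vee \boxtimes \mathbf{v}].
\]
Under the box-tensor pairing, this class is represented by the same tuple of intersection points as $\by$, since $\HD_U$ and $\HD_{AZ}$ each contribute a single generator and no new interior intersection points are created. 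Thus $\Psi_1$ agrees on homology with $\sigma_1(\by) = (\by)$, completing the proof. Apart from the admissibility normalization above, the argument is a direct unraveling of the elementary join formula made possible by the triviality of the $1$-handle arc algebra.
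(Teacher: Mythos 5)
Your overall route is the same as the paper's: split $\HD$ as $\HD_{-W}\cup\HD_V$ near the attaching region, concatenate with $\HD_U\cup\HD_{AZ}$, and evaluate the elementary join formula of \fullref{lem:elementary join} using the fact that $A(\SZ_1)$ is generated by $\iota_\varnothing$. However, there is a genuine gap in your final identification. You claim that no new $\alpha$- or $\beta$-curves are introduced ``since $\SZ_1$ has no arcs,'' and hence that $\HD_U\cup\HD_{AZ}\cup\HD_V$ is \emph{precisely} a diagrammatic $1$-handle attachment and that the output class is represented by the same tuple as $\by$. The absence of arcs in $\SZ_1$ only guarantees that the cap diagram $\HD_W$ and the Auroux--Zarev piece $\HD_{AZ}$ contribute no curves; the bordered diagram $\HD_U$ for the $1$-handle itself (the left half of \fullref{fig:bordered contact handle diagrams}) carries a closed $\alpha$/$\beta$ pair meeting in the intersection point $e$, which is exactly the generator $\mathbf{u}_0$ you need in order to write $\EH(h^1)=\mathbf{u}_0\boxtimes w$. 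Consequently the join formula produces the generator $(e,\iota_\varnothing^{\vee},\by)=(e,\by)$ in a diagram that is a \emph{stabilization} of the diagrammatic attachment $\HD'$, not $\HD'$ itself, and the equality ``$[\mathbf{u}_0\boxtimes\iota_\varnothing^{\vee}\boxtimes\mathbf{v}]=[\sigma_1(\by)]$'' does not even typecheck without a further move: the two classes live in complexes associated to diagrams with different curve sets.

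What is missing is the destabilization step: after gluing, one destabilizes to remove the curve pair through $e$, obtaining exactly $\HD$ with a two-dimensional $1$-handle attached along the feet of $h^1$, i.e.\ the diagrammatic attachment $\HD'$, and the map $(e,\by)\mapsto\by$ is a graded homotopy equivalence, so that $\Psi_1[\by]=[\by]=[\sigma_1(\by)]$. This is a short but necessary argument, and it is the point where your incorrect structural claim about $\HD_U$ would derail the proof. Separately, you flag the preservation of admissibility as ``the key obstacle, to be addressed first'' but never address it; it should be handled, e.g.\ by observing that the periodic domains of $\HD'=\HD_U\cup\HD_{AZ}\cup\HD_V$ correspond to those of $\HD$ (any curve abutting suture regions on both sides can be erased when hunting for nonnegative periodic domains), so admissibility of $\HD$ implies admissibility of $\HD'$.
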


\begin{figure}
\labellist
 \begin{footnotesize}
  \pinlabel $p$ at 65 46
  \pinlabel $q$ at 65 26
  
  \pinlabel $e$ at 200 40
 \end{footnotesize}
\endlabellist
\includegraphics[width=1.0\textwidth]{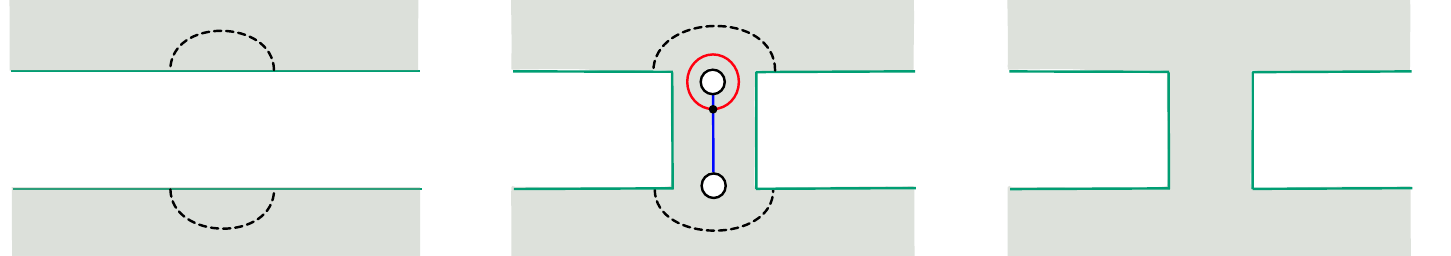}
\caption{Gluing a contact 1-handle.}
  \label{fig:one handle gluing}
\end{figure}

\begin{proof}
Use the diagram $\HD_{U}\cup\HD_W$ shown in \fullref{fig:bordered contact handle diagrams} for $-h^1$. Note that any admissible Heegaard diagram $\HD$ for $(-M,-\Gamma)$ splits as $\HD =  \HD_{-W}\cup\HD_{V}$, where $\HD_{-W}$ is a neighborhood of the attaching region of $h^1$ in $\HD$. 

The intersection point $e$ represents the sole generator $\EH(h^1)$ of $\SFH(-h^1)$; it is also the generator of $\BSD(\HD_U)$. The diagram $\HD_{W}$ for the cap has no curves, so the generator $w$ of $\BSD(\HD_{W})$ is the empty element. (In bordered sutured theory, a diagram with no generator is allowed; in this case the bordered invariants have rank 1, generated by the empty set.) We have $\iota_\varnothing\cdot w = w$, where $\iota_\varnothing$ is the unique generator of $A(-\SZ_1)$; it corresponds to the strand diagram with no strands. Fix an arbitrary cycle $\by\in\SFC(\HD)$ and note that it corresponds to $(w^{\vee},\by)\in\SFC(\HD_{-W}\cup \HD_V)$. Since $\BSD(\HD_W)$ is an elementary module, we can use \fullref{lem:elementary join} to see that the bordered contact gluing map is induced by

\[
\psi_{-\SF}(e\otimes \by) = \psi_{-\SF}((e,w)\otimes (w^{\vee},\by)) = (e,\iota_\varnothing^{\vee}, \by).
\]

\begin{figure}[H]
\labellist
 \begin{footnotesize}
  \pinlabel $x_0$ at 105 95
  \pinlabel $x_0$ at 105 20
  
  \pinlabel $y_0$ at 325 80
 \end{footnotesize}
 	\pinlabel $\alpha_0$ at 40 80
	\pinlabel  $\beta_0$ at 95 35
 
 	\pinlabel $\beta_{g+1}$ at 305 35
 	\pinlabel $\alpha_{g+1}$ at 365 35
\endlabellist
\includegraphics[width=1.0\textwidth]{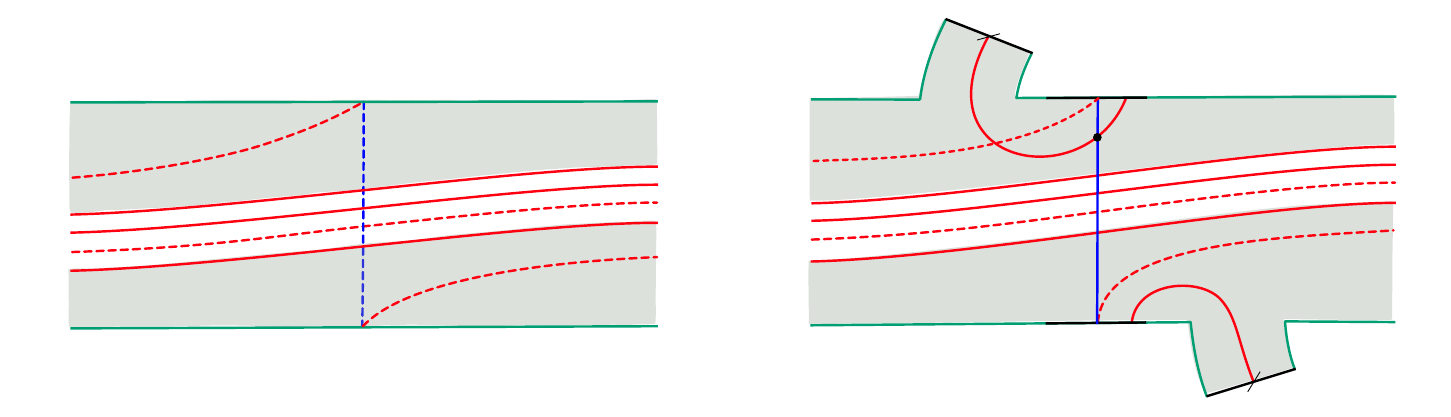}
  
  \caption{(Left) The diagram $\HD_1$ with arcs for the contact 2-handle; (Right) Attaching the first trivial bypass.}
  \label{fig:pregluing bypass1}
\end{figure}

\begin{figure}[H]
\labellist
 \begin{footnotesize}
  \pinlabel $w^{\vee}$ at 195 94
  \pinlabel $w^{\vee}$ at 245 94
  \pinlabel $x_0$ at 105 60
  \pinlabel $x_0$ at 105 60
  \pinlabel $y_0$ at 300 80
 \end{footnotesize}
 \pinlabel $\beta_{g+2}$ at 250 70
 	\pinlabel $\alpha_{g+2}$ at 365 35
\endlabellist
\includegraphics[width=1.0\textwidth]{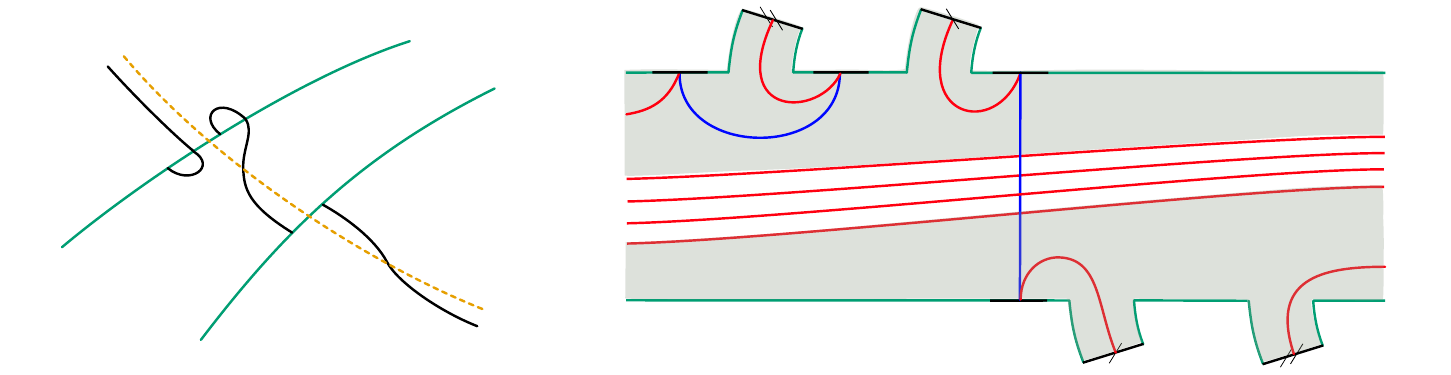}
  
  \caption{(Left) The attaching curve for the 2-handle and the associated trivial bypass arcs in $\partial M$; (Right) Attaching the second trivial bypass to obtain $\HD_2$.}
  \label{fig:pregluing bypass2}
\end{figure}

Since the diagram $\HD_{AZ}$ for the twisting slice has no curves, $\iota_\varnothing^{\vee}$ is the empty set, and we have $(e,\iota^{\vee}, \by) = (e,\by)$. Now, destabilizing yields the diagram for $(-M,-\Gamma)$ with a 1-handle attached to the Heegaard surface, exactly as in the case of the diagrammatic map. The map $(e,\by)\to \by$ is a graded homotopy equivalence, so that $\Psi_1[\by] =[ \by] = [\sigma_1(\by)]$.

Note that $\HD' = \HD_U\cup\HD_{AZ}\cup\HD_V$ is admissible, since its periodic domains correspond to the periodic domains of $\HD$.
\end{proof}

\begin{lemma}
\label{lem:two handle maps}
Given an admissible diagram $\HD_1$ for $(-M,-\Gamma)$, there are admissible diagrams $\HD_3$ for $(-M,-\Gamma)$, $\HD_4$ for $(-M_2,-\Gamma_2)$, and $\HD_6$ for $(-M_2,-\Gamma_2)$;  and graded isomorphisms $f,g$ making the following diagram commute.
\begin{center}
\begin{tikzcd}
\SFH(\HD_1) \arrow[r, "f"] \arrow[d, "(\sigma_{2})_*"]
&\SFH(\HD_3)  \arrow[d, "\Psi_{2}"]\\
\SFH(\HD_6) 
& \SFH(\HD_4) \arrow[l, "g" ]
\end{tikzcd}
\end{center}
\end{lemma}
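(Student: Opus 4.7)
The plan is to bridge the diagrammatic and bordered pictures of the contact $2$-handle by inserting two trivial bypass attachments, whose associated maps are graded isomorphisms by \fullref{lem:trivial bypass maps}, thereby reshaping $\HD_1$ into a diagram on which \fullref{lem:elementary join} can be applied directly.

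First, I would attach two trivial bypasses to $\HD_1$ as indicated in Figures~\ref{fig:pregluing bypass1} and~\ref{fig:pregluing bypass2}, producing a Heegaard diagram $\HD_3$ for $(-M,-\Gamma)$ together with a graded isomorphism $f:\SFH(\HD_1)\to\SFH(\HD_3)$ sending a cycle $\by$ to $(\by,x_0,y_0)$, where $x_0,y_0$ are the preferred intersections of the two bypasses. The bypasses are arranged so that a neighborhood of the attaching region of the contact $2$-handle in $\HD_3$ is precisely the mirror $\HD_{-W}$ of the bordered cap diagram of Figure~\ref{fig:bordered contact handle diagrams} (right). This yields the splitting $\HD_3=\HD_{-W}\cup\HD_V$ required by \fullref{lem:elementary join}.

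Second, I would compute $\Psi_2\circ f$. Under the above splitting, the tuple $(\by,x_0,y_0)$ corresponds to $(w^{\vee},\by'')\in W^{\vee}\boxtimes V$, where $w$ is the unique generator of $\BSA(\HD_W)$, and its left idempotent $\iota_L(w)\in\{\iota_1,\iota_2\}\subset A(-\SZ_2)$ is determined by the arc that $w$ occupies. Since $\BSD(\HD_U)$ is generated by the point $e$ representing $\EH(h^2)$, \fullref{lem:elementary join} gives
\[
\psi_{-\SF}\bigl(e\otimes(w^{\vee},\by'')\bigr)=(e,\iota_L(w)^{\vee},\by'')\in\SFC(\HD_U\cup\HD_{AZ}\cup\HD_V)=\SFC(\HD_4).
\]
Using the identifications following Figure~\ref{fig:twisting slice diagram} between the basis of $\BSAA(\HD_{AZ})$ and the dual basis of $A(-\SZ_2)^{\vee}$, the element $\iota_L(w)^{\vee}$ is one of the points $z_3$ or $z_5$ in $\HD_{AZ}$, which sit in the rectangular region bordering the strip through which the attaching curve of $h^2$ will pass after destabilization.

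Third, I would perform a sequence of destabilizations together with isotopies on $\HD_4$ yielding a diagram diffeomorphic to $\HD_6$, the diagrammatic contact $2$-handle attachment applied directly to $\HD_1$. This sequence induces the graded isomorphism $g:\SFH(\HD_4)\to\SFH(\HD_6)$ and carries $(e,\iota_L(w)^{\vee},\by'')$ to $(\by,x_0')\in\SFC(\HD_6)$, where $x_0'$ is the preferred intersection of the diagrammatic $2$-handle. This is exactly $\sigma_2(\by)$. Admissibility of $\HD_3$ and $\HD_4$ follows from that of $\HD_1$ since the trivial bypass and bordered handle regions contribute no new periodic domains. The main obstacle will be this last step: one must carefully track the Auroux--Zarev generator $\iota_L(w)^{\vee}$ alongside the bypass points $x_0,y_0$ through several destabilizations and verify that no disk counts are introduced or destroyed by the moves, so that the terminal position of $\iota_L(w)^{\vee}$ coincides precisely with the preferred intersection $x_0'$ created when $\beta_0=b\cup\lambda$ meets $\alpha_0=a\cup\lambda$ inside the attached $1$-handle region of the diagrammatic picture.
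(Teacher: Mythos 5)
Your overall skeleton -- two trivial bypass/stabilization attachments to reshape $\HD_1$ into a diagram splitting as $\HD_{-W}\cup\HD_V$, an application of \fullref{lem:elementary join} with the elementary cap module, and a sequence of destabilizations back to the diagrammatic picture -- is the same route the paper takes. However, your third step contains a genuine gap, and it sits exactly where you flag "the main obstacle." You expect that the destabilizations carry the generator $(e,\iota_L(w)^{\vee},\by'')$ to $(\by,x_0')$, i.e.\ that the Auroux--Zarev idempotent point ends up at the preferred intersection after tracking it through Heegaard moves. That is not what happens, and no amount of careful tracking will make it happen: after the first destabilization the class is represented by a generator of the form $(z_3,y_0,\by)$ (note that $\iota_L(w)^{\vee}$ is forced to be $z_3=\iota_2^{\vee}$, since $\iota_2\cdot w = w$; it cannot be $z_5$), and the point $z_3$ lies on a curve that is removed by the small Hamiltonian isotopy used to reach $\HD_6$, so this generator does not correspond to any generator of $\HD_6$ at all. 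The triangle maps for the remaining moves simply do not send it to $(\by,x_0)$.

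The missing idea is a change of homology representative via the differential of the intermediate diagram: because $\by$ is a cycle, one has $\partial(z_1,y_0,\by) = (z_3,y_0,\by) + (z_2,x_0,\by)$ (the two shaded regions in \fullref{fig:two handle gluing}), hence $[(z_3,y_0,\by)] = [(z_2,x_0,\by)]$, and only the latter representative -- which already contains the preferred intersection $x_0$ -- is carried by the final isotopy, handleslide, and trivial destabilization to $[(x_0,\by)] = [\sigma_2(\by)]$. Without this boundary relation your argument stalls at a class that looks nothing like $\sigma_2(\by)$ on the level of generators. Two smaller omissions: the triangle counts for each handleslide/destabilization must actually be computed (the paper does this in \fullref{sec:triangle maps}; "no disk counts are introduced or destroyed" is not automatic and is verified there by multiplicity arguments near the suture regions), and your identification of $f$ as $\by\mapsto(\by,x_0,y_0)$ mislabels the new points -- the second stabilization point is the one that later pairs with the cap generator $w^{\vee}$, not a second preferred intersection.
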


\begin{proof}

We will show how to compute $\Psi_2$ from a given diagram for $\sigma_2$ by a sequence of diagrams $\{\HD_i\}$. For the most part, the diagram $\HD_{i+1}$ will differ from $\HD_i$ by a number of Heegaard moves, so we have graded isomorphisms $\Psi_{\HD_i,\HD_{i+1}}:\SFH(\HD_i)\to\SFH(\HD_{i+1})$ induced by holomorphic triangle counts; $f$ and $g$ will be compositions of such maps. To ease discussion, we defer the computations of these triangle maps to \fullref{sec:triangle maps}.

Let $\HD_1$ be an admissible Heegaard diagram for $(-M,-\Gamma)$ and use the diagram $\HD_{U}\cup\HD_W$ shown in \fullref{fig:bordered contact handle diagrams} for $-h^2$. The tuple $(c, w)$ represents the unique generator $\EH(h^2)$ of $\SFH(-h^2)$, while $w$ is the sole generator for $\BSA(\HD_{W})$. Referencing \fullref{fig:twisting slice diagram}, we have $\iota_2\cdot w = w$. The dashed lines in \fullref{fig:pregluing bypass1} are the curves $\alpha_0,\beta_0$ for the diagrammatic 2-handle attachment. Fix a cycle $\by\in\SFC(\HD_1)$.

In order to compute the bordered contact gluing map, we must find a way to realize $(-M,-\Gamma)$ as $-\overline{\W}_2\cup_{-\SF}((-M,-\Gamma)\setminus\-\overline{\W}_2)$ at the level of Heegaard diagrams, where $\W_2$ is the cap for the 2-handle. For this purpose, we peform two stabilizations. The first is depicted on the right side of \fullref{fig:pregluing bypass1} so that $\beta_{g+1}$ agrees with $\beta_0$. The second is depicted in \fullref{fig:pregluing bypass2}. This yields a diagram $\HD_2$ where $\alpha_{g+2}$ agrees with $\alpha_0$ outside a neighborhood of $\beta_0$. (One can think of these stabilizations as attaching trivial bypasses along arcs which approximate the attaching curve for the 2-handle; see \fullref{fig:pregluing bypass2}.) The map $\Psi_{\HD_1,\HD_2}$ sending $[\by]$ to $[(w^{\vee},y_0, \by)]$ is induced by a well-defined isomorphism of graded complexes. Handleslide $\beta_{g+1}$ over $\beta_{g+2}$ to obtain the diagram $\HD_3$. The triangle count $\Psi_{\HD_2,\HD_3}$ sends $[(w^{\vee},y_0, \by)]$ to $[(w^{\vee},y_0, \by)]$.

\begin{figure}
\labellist
	\begin{footnotesize}
  	\pinlabel $w^{\vee}$ at 30 75
  	\pinlabel $y_0$ at 112 78
  	
  	\pinlabel $x_0$ at 315 95
  	\pinlabel $y_0$ at 322 75
  	\pinlabel $x_0$ at 315 18
  	\end{footnotesize}
\endlabellist
\includegraphics[width=1.0\textwidth]{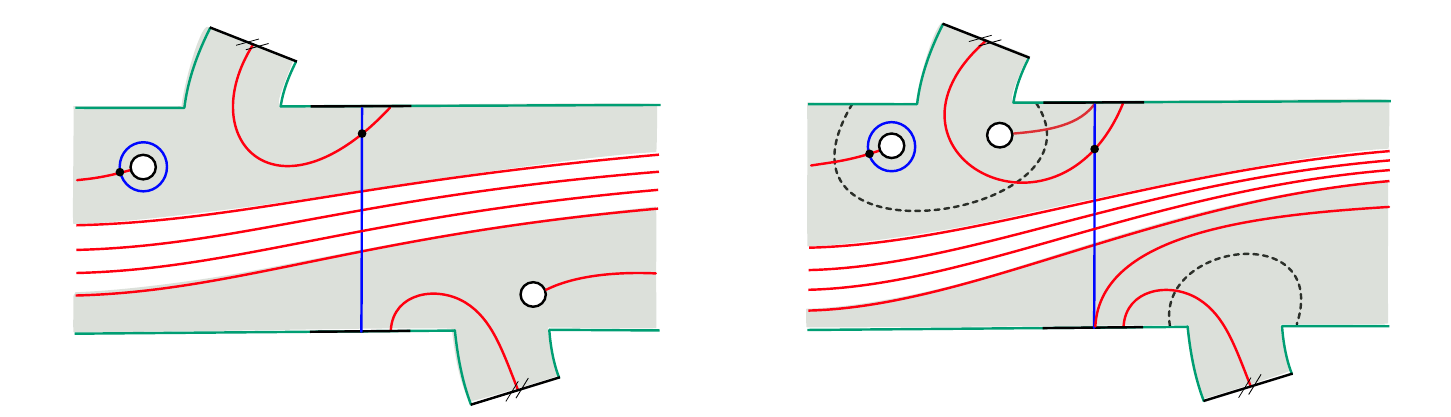}
  \caption{Left: $\HD_2$; Right: $\HD_3$}
  \label{fig:pregluing stabilization}
\end{figure}

\begin{figure}
\labellist
 \begin{footnotesize}
 
  \pinlabel $z_3$ at 137 93
  \pinlabel $c$ at 66 115

  \pinlabel $z_1$/$\rho_{12}^{\vee}$ at 324 133
  \pinlabel $z_2$/$\rho_1^{\vee}$ at 327 119
  \pinlabel $z_3$/$\iota_2^{\vee}$ at 324 105
  
 \end{footnotesize}
 	\pinlabel $\alpha_{g+2}'$ at 86 130
 	\pinlabel $\alpha_{g+3}$ at 62 95
 
\endlabellist
\includegraphics[width=1.0\textwidth]{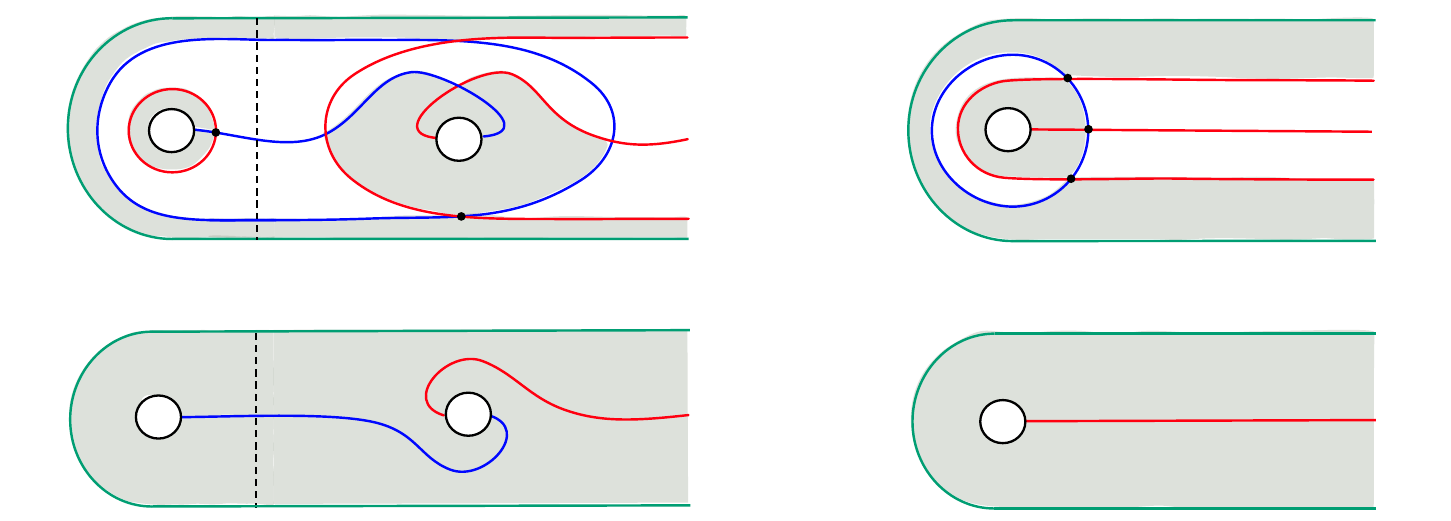}
  \caption{Destabilizing from $\HD_4$ to $\HD_5$; points on the right are additionally labelled by the algebra elements in $A(-\SZ_2)^{\vee}$ they correspond to as in \fullref{fig:twisting slice diagram}.}
  \label{fig:twisted handle destabilization}
\end{figure}

The right hand side of \fullref{fig:pregluing stabilization} shows how to split $\HD_3$ as $\HD_3 = \HD_{-W}\cup\HD_V$. Performing a bordered gluing with the 2-handle yields the diagram $\HD_4 = \HD_U\cup\HD_{AZ}\cup\HD_V$. Note that $z_3$ corresponds to the basis idempotent $\iota_2$ which acts non-trivially on $w$; compare with \fullref{fig:twisting slice diagram}. The associated bordered gluing map is induced by
\[
\psi_{-\SF}((c,w)\otimes(w^{\vee},y_0, \by)) = (c,z_3,y_0, \by).
\]
After gluing, we destabilize to obtain $\HD_5$ as depicted in \fullref{fig:two handle gluing}.  The part of $\HD_5$ which represents $-h^2\cup \TW_{-\SF,+}$ before and after the destabilization are shown in \fullref{fig:twisted handle destabilization}. This destabiliation is equivalent to performing two handleslides followed by a trivial destabilization. The triangle count $\Psi_{\HD_4,\HD_5}$ sends $[(c,z_3,y_0, \by)]$ to $[(z_3,y_0, \by)]$.

\begin{figure}
\labellist
	\begin{footnotesize}
  	\pinlabel $z_3$ at 125 120
  	\pinlabel $z_1$ at 158 120
  	\pinlabel $z_2$ at 140 87
  	\pinlabel $x_0$ at 220 115
  	\pinlabel $y_0$ at 225 95
  	\pinlabel $x_0$ at 220 17
	\end{footnotesize}
	\pinlabel $\beta_{g+2}'$ at 140 132
	\pinlabel $\beta_{g+1}$ at 205 40
\endlabellist
\includegraphics[width=1.0\textwidth]{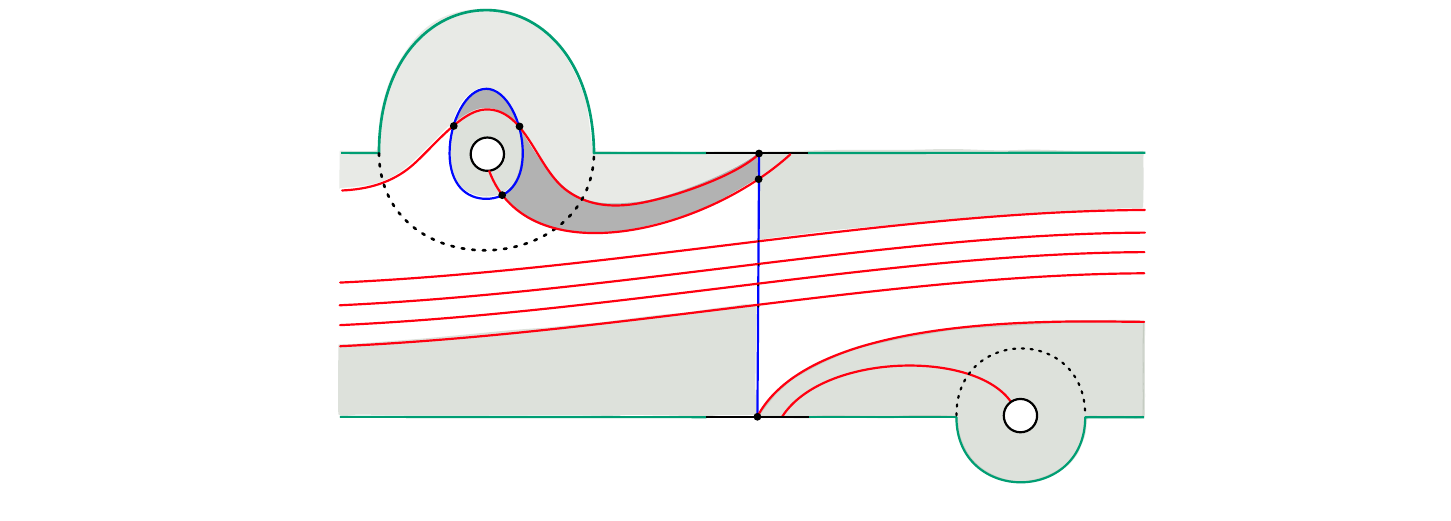}
  \caption{$\HD_5$; the regions shaded grey represent terms in $\partial(\by,y_0,z_1)$. }
  \label{fig:two handle gluing}
\end{figure}

Since $\by$ is a cycle, we have $\partial (z_1,y_0, \by) = (z_3,y_0, \by) + (z_2,x_0,\by)$; this can be seen from \fullref{fig:two handle gluing}, where the shaded regions correspond to the terms on the right. Thus, $[(z_3,y_0, \by)] = [(z_2,x_0,\by)]$.

\begin{figure}
\labellist
	\begin{footnotesize}
  \pinlabel $x_0$ at 220 87
  \pinlabel $x_0$ at 220 0
  	\end{footnotesize}
\endlabellist
\includegraphics[width=1.0\textwidth]{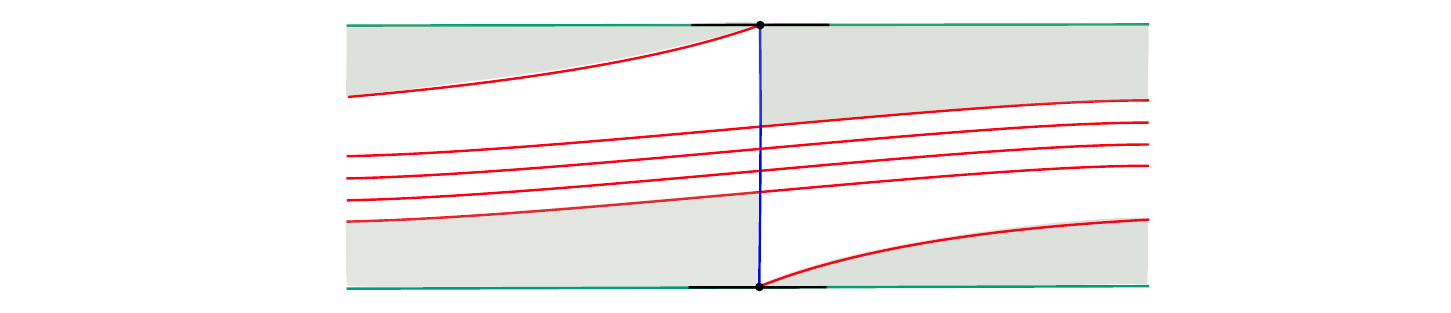}
\caption{$\HD_6$}
\label{fig:simple two handle comparison}
\end{figure}

Now, perform a small Hamiltonian isotopy to remove $z_1$ and $z_3$, handleslide $\beta_{g+1}$ over $\beta_{g+2}'$, and perform a trivial destabilization to obtain $\HD_6$; this is the diagram for a diagrammatic 2-handle attachment on $\HD_1$. The associated triangle count $\Psi_{\HD_5,\HD_6}$ sends $[(z_2,x_0, \by)]$ to $[(x_0,\by)]$.

By composing these maps, we see that the bordered contact gluing map from $\HD_1$ to $\HD_6$ is
\[
[\by]\xrightarrow{\Psi_{\HD_1,\HD_2}} [(w^{\vee},y_0, \by)] \xrightarrow{\Psi_{\HD_2,\HD_3}} [(w^{\vee},y_0, \by))]\xrightarrow{\Psi_2} [(c,z_3,y_0, \by)]
\]
\[
\xrightarrow{\Psi_{\HD_4,\HD_5}} [(z_3,y_0, \by)] = [(z_2,x_0,\by)]\xrightarrow{\Psi_{\HD_5,\HD_6}} [(x_0,\by)] = [(\by,x_0)] ,
\]
which is the map on homology induced by $\sigma_2$.

Note that all the diagrams we have used are admissible by the following argument. In general, let $\alpha\subset\balpha$ be a curve in a diagram which abuts a suture region on each side. Without loss of generality, suppose that $\alpha$ appears in the boundary of a periodic domain $\D$, and the multiplicity of $\D$ on each region to the left of $\balpha$ is non-negative. Since there is a basepoint region $D$ on the left where $\D$ has multiplicity zero; let $a = \partial D \cap\alpha$. Let $D'$ be the region with $\partial D'\cap \alpha = -a$. Then $\D$ must have negative multiplicity on $D'$. 

Thus, if one is looking for a periodic domain with no negative coefficients, one can erase any curve (not just an $\alpha$-curve) which abuts the basepoint region on both sides. In each of the diagrams $\HD_2$ through $\HD_6$, we can erase a number of such curves to obtain $\HD_1$ with a number of strips attached to the boundary; the periodic domains of each diagram obtained this way are exactly the periodic domains of $\HD_1$. Since $\HD_1$ is admissible by hypothesis, so are $\HD_2$ through $\HD_6$.
\end{proof}

\subsection{Proof of Theorem 1}
\label{sec:main proof}

We are now ready to prove the main theorem. The precise statement is as follows.

\begin{theorem}
\label{thm:main duplicate}

Suppose that $\Phi_{\xi}$ is the $\HKM$ map for  a proper inclusion $(M,\Gamma)\subset(M',\Gamma')$ of sutured manifolds with no isolated components and compatible contact structure $\xi$. There is a sutured contact manifold $(M'',\Gamma'',\xi'')$, a sutured surface $\SF = (F,\Lambda)$, and graded isomorphisms $f,g$ such that
\begin{enumerate}
\item $(M',\Gamma') \cong  (M'',\Gamma'')\cup_{\SF}(M,\Gamma)$
\item $(M'\setminus \INT(M),\xi)$ is contactomorphic to $(M''\cup_F(\partial M\times[0,1]),\xi''\cup_{\SF} \xi_{\Gamma})$, where $\xi_\Gamma$ is the $[0,1]$-invariant contact structure compatible with $\Gamma$
\item the dividing set $\Gamma|_F$ is disk-decomposable as defined in \fullref{sec:zarev map}
\item the following diagram commutes
\end{enumerate}
\begin{center}
\begin{tikzcd}
\SFH(-M,-\Gamma) \arrow[r, "f" ] \arrow[d, "\Phi_{\xi}"]
&\SFH(-M,-\Gamma)\arrow[d, "\Psi_{\xi''}"]\\
\SFH(-M',-\Gamma') \arrow[r, "g" ]
& \SFH((-M'',-\Gamma'') \cup_{-\SF} (-M,-\Gamma))
\end{tikzcd}
\end{center}

\end{theorem}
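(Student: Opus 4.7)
The plan is to decompose $\xi$ on $M'\setminus \INT(M)$ into contact handles, invoke the handle-by-handle identifications already established, and then reassemble everything using the composition law for the bordered contact gluing map. Since $M'\setminus \INT(M)$ has no isolated components (\fullref{rmk:isolated components}), I factor $\xi$ as a sequence of contact $1$- and $2$-handle attachments $h^{i_1},\dots,h^{i_n}$ to $\partial M$. Functoriality of the $\HKM$ map gives
\[
  \Phi_{\xi} \;=\; \Phi_{i_n} \circ \cdots \circ \Phi_{i_1},
\]
where $\Phi_{i_j}$ is the $\HKM$ map for attaching $h^{i_j}$.

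For each $j$, \fullref{lem:padded handles decomposition} shows that $\Phi_{i_j}$ is induced by the diagrammatic map $\sigma_{i_j}$ precomposed with a graded homotopy equivalence, while \fullref{lem:one handle maps} and \fullref{lem:two handle maps} identify $\sigma_{i_j}$ with the bordered contact handle gluing map $\Psi_{i_j}$ up to graded isomorphism. Chaining these identifications gives
\[
  \Phi_{\xi} \;\cong\; \Psi_{i_n} \circ \cdots \circ \Psi_{i_1}
\]
up to graded isomorphism.

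To realize the right-hand side as a single bordered contact gluing map $\Psi_{\xi''}$, I follow the construction of \fullref{sec:zarev contact gluing} and replace each $h^{i_j}$ with a punctured padded contact handle $(P^{L_j}_{i_j},\Gamma^{L_j}_{i_j},\xi^{L_j}_{i_j})$ as in \fullref{rmk:stacking punctured handles}; the Legendrians $L_j$ are chosen so that no $L_j$ meets the attaching region of any later handle and so that each $L_j$ extends all the way out to $\partial M$. By \fullref{lem:padding independence}, introducing the punctures does not change the associated bordered gluing map up to graded isomorphism. Concatenating the punctured padded handles produces a sutured contact manifold $(M'',\Gamma'',\xi'')$ which, together with a collar $\partial M \times [0,1]$ carrying the $I$-invariant contact structure $\xi_{\Gamma}$, recovers $(M'\setminus \INT(M),\xi)$ up to contactomorphism, giving condition~(2). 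The sutured surface $\SF=(F,\Lambda)$ where $(M'',\Gamma'')$ meets $(M,\Gamma)$ in the bordered gluing is the surface $\partial M$ punctured near the feet of the $L_j$; condition~(1) then follows from the definition of $\cup_{\SF}$, since the positive twisting slice $\TW_{\SF,+}$ in the bordered gluing accounts precisely for the collar pulled out of $M'\setminus \INT(M)$. Because each closed component of $\Gamma\cap F$ has been punctured by construction, \fullref{lem:rank one div sets} guarantees that $\Gamma|_F$ is disk-decomposable, giving condition~(3).

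With disk-decomposability in place at every intermediate sutured surface, iterating \fullref{lem:zarev composition} identifies the composition $\Psi_{i_n} \circ \cdots \circ \Psi_{i_1}$ with $\Psi_{\xi''}$ up to graded isomorphism. Pasting the commutative squares supplied by Lemmas \ref{lem:padded handles decomposition}, \ref{lem:one handle maps}, \ref{lem:two handle maps}, \ref{lem:padding independence}, and \ref{lem:zarev composition} yields the diagram in~(4). The main technical obstacle is ensuring disk-decomposability at every stage so that the composition law applies; this is precisely what the punctured padded handle construction is designed to achieve, and is the reason we cannot simply use ordinary padded handles.
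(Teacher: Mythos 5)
Your proposal is correct and follows essentially the same route as the paper's proof: decompose $\xi$ into contact $1$- and $2$-handles, identify each $\Phi_{i_j}$ with $\Psi_{i_j}$ via \fullref{lem:padded handles decomposition}, \fullref{lem:one handle maps}, and \fullref{lem:two handle maps}, and reassemble with punctured padded handles, \fullref{lem:padding independence}, and \fullref{lem:zarev composition}. The only quibble is the phrase that each $L_j$ ``extends all the way out to $\partial M$'': as in \fullref{rmk:stacking punctured handles}, the Legendrians are extended outward through the later handles (toward $\partial M'$), which is what guarantees the intermediate dividing sets have no closed components.
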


\begin{proof}[Proof of \fullref{thm:main}]
Decompose $\xi$ into a sequence of contact handles $h^{i_1},\ldots,h^{i_n}$ attached to $\partial M\times\{1\}$ in $(\partial M\times [0,1],-\Gamma\sqcup\Gamma, \xi_{\Gamma})$. We can factor the $\HKM$ map as $\Phi_{\xi} = \Phi_{i_n}\circ\ldots\circ \Phi_{i_1}$, where each $\Phi_{i_j}$ is the $\HKM$-map for attaching a padded handle $(P_{i_j},\Gamma_{i_j},\xi_{i_j})$.

We can also decompose $\xi$ as a sequence of punctured padded handles $(P_{i_1}^{L_1},\Gamma_{i_1}^{L_1},\xi_{i_1}^{L_1}),\ldots,$
$(P_{i_n}^{L_n},\Gamma_{i_n}^{L_n},\xi_{i_n}^{L_n})$ attached to $(\partial M\times [0,1],-\Gamma\sqcup\Gamma, \xi_{\Gamma})$ along a surface $F\subset \partial M\times\{1\}$. Let $(M'',\Gamma'',\xi'')$ be the structure obtained by attaching $(P_{i_2}^{L_2},\Gamma_{i_2}^{L_2},\xi_{i_2}^{L_2}),\ldots,(P_{i_n}^{L_n},\Gamma_{i_n}^{L_n},\xi_{i_n}^{L_n})$ to $(P_{i_1}^{L_1},\Gamma_{i_1}^{L_1},\xi_{i_1}^{L_1})$.

By construction, $(M'',\Gamma'')\cup_{\SF}(M,\Gamma)$ can be decomposed as a number of punctured padded handle attachments to $(M,\Gamma)$ with bordered contact gluing maps $\Psi_{i_1},\ldots,\Psi_{i_n}$, where each $\Psi_{i_j}$ corresponds to 
$(P_{i_j}^{L_j},\Gamma_{i_j}^{L_j},\xi_{i_j}^{L_j})$. By \fullref{lem:padding independence}, these are also the maps for the $h_{i_j}$ up to graded isomorphism. The bordered contact gluing map $\Psi_{\xi''}$ factors as $\Psi_{\xi''} = \Psi_{i_n}\circ\ldots\circ \Psi_{i_1}$ up to graded isomorphism by \fullref{lem:zarev composition}.

 By \fullref{lem:padded handles decomposition}, \fullref{lem:one handle maps}, and \fullref{lem:two handle maps} the gluing maps $\Phi_{i_j}$ and $\Psi_{i_j}$ equal $(\sigma_{i_j})_*$ up to graded isomorphism for each $j$. By the composition law for the $\HKM$ map and \fullref{lem:zarev composition}, $\Phi_{\xi} = \Psi_{\xi''}$ up to graded isomorphism.

\end{proof}

While we have couched \fullref{thm:main} in terms of constructing a bordered contact map from an $\HKM$ map, we can also go in the other direction.

\begin{theorem}
\label{thm:main converse}

Let $(M'',\Gamma'')\cup_{\SF} (M,\Gamma)$ be a bordered gluing, where the dividing set $\Gamma|_F$ on $\SF$ is disk-decomposable, and let $\xi''$ be a compatible contact structure on $M''$. There is a sutured manifold $(M',\Gamma')$, a proper inclusion $(M,\Gamma)\subset(M',\Gamma')$, a compatible contact structure $\xi$ on $M'\setminus\INT(M)$, and graded isomorphisms $f',g'$ such that
\begin{enumerate}
\item $(M',\Gamma') \cong (M'',\Gamma'')\cup_{\SF} (M,\Gamma)$
\item $(M'\setminus \INT(M),\xi)$ is contactomorphic to $(M''\cup_{\SF}(\partial M\times[0,1]),\xi''\cup_{\SF} \xi_\Gamma)$, where $\xi_\Gamma$ is the $[0,1]$-invariant contact structure compatible with $\Gamma$
\item the proper inclusion has no isolated components
\item the following diagram commutes
\end{enumerate}
\begin{center}
\begin{tikzcd}
\SFH(-M,-\Gamma) \arrow[r, "f'" ] \arrow[d, "\Psi_{\xi''}"]
&\SFH(-M,-\Gamma)\arrow[d, "\Phi_{\xi}"]\\
\SFH((-M'',-\Gamma'') \cup_{-\SF} (-M,-\Gamma)) \arrow[r, "g'" ]
& \SFH(-M',-\Gamma')
\end{tikzcd}
\end{center}

\end{theorem}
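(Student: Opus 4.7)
The plan is to construct the proper inclusion $(M,\Gamma)\subset(M',\Gamma')$ and contact structure $\xi$ directly from the bordered gluing data, and then reduce \fullref{thm:main converse} to the same composition of handle-level identifications that proves \fullref{thm:main duplicate}. First I would set $(M',\Gamma') := (M'',\Gamma'')\cup_{\SF}(M,\Gamma)$. By the definition of bordered gluing from \fullref{sec:bordered gluing}, this is topologically
\[
((M''\setminus \W_\gamma)\cup_{\SF}\TW_{\SF,+})\cup_{-\overline{\SF}}(M\setminus \overline{\W_\gamma}),
\]
where $\W_\gamma$ is the cap for $\gamma = \Gamma|_F$. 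Since $\overline{\W_\gamma}$ is a collar of $\SF$ in $M$, pushing this collar back yields an embedded copy of $M$ in $M'$ and thus property (1). Equipping $M''\setminus \W_\gamma$ with $\xi''|_{M''\setminus \W_\gamma}$, the twisting slice with its standard contact structure, and $\overline{\W_\gamma}$ with the $I$-invariant contact structure for $\gamma$, we obtain a contact structure $\xi$ on $M'\setminus\INT(M)$. The identification of Zarev's positive twisting slice with the negative twist appearing from smoothing corners in contact geometry (as discussed in \fullref{sec:bordered gluing}) shows that $(M'\setminus\INT(M),\xi)$ is contactomorphic to $(M''\cup_F(\partial M\times[0,1]),\xi''\cup_{\SF}\xi_\Gamma)$, giving (2). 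Property (3) follows because each component of $M'\setminus\INT(M)$ contains a piece of the collar $\partial M\times[0,1]$ and therefore meets $\partial M'$.

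For the commutative diagram in (4), decompose $\xi''$ into an ordered sequence of contact 1- and 2-handles $h^{i_1},\ldots,h^{i_n}$ attached to a collar of $\partial M$, and realize this decomposition by punctured padded handles $(P^{L_j}_{i_j},\Gamma^{L_j}_{i_j},\xi^{L_j}_{i_j})$ as in \fullref{rmk:stacking punctured handles}. By construction this simultaneously gives a punctured padded handle decomposition of $\xi$ on $M'\setminus\INT(M)$. By \fullref{lem:zarev composition} together with \fullref{lem:padding independence}, the bordered contact gluing map factors as
\[
\Psi_{\xi''} = \Psi_{i_n}\circ\cdots\circ\Psi_{i_1}
\]
up to graded isomorphism, where $\Psi_{i_j}$ is the bordered contact gluing map for the $j$-th ordinary handle attachment. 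By functoriality of the HKM map, the HKM map also factors as $\Phi_\xi = \Phi_{i_n}\circ\cdots\circ\Phi_{i_1}$. Finally, \fullref{lem:padded handles decomposition}, \fullref{lem:one handle maps}, and \fullref{lem:two handle maps} identify each $\Phi_{i_j}$ and each $\Psi_{i_j}$ with the common diagrammatic handle map $(\sigma_{i_j})_*$ up to graded isomorphism. Composing yields $\Phi_\xi = \Psi_{\xi''}$ up to graded isomorphism, which is precisely the commutative diagram required, with $f'$ and $g'$ the resulting graded isomorphisms.

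The main obstacle I expect is the topological and contact-geometric identification in step (2): verifying that the bordered gluing of $M$ with $M''$ along $\SF$ truly produces a sutured manifold whose complement of $M$ carries the contact structure $\xi''\cup_\SF\xi_\Gamma$, reconciling the positive twist of Zarev's gluing with the negative twist that appears from smoothing corners. Once this identification is made, the proof becomes a direct compilation of the composition and padding-independence results already established in \fullref{sec:the bordered contact gluing map} and the handle-level identifications of \fullref{sec:diagrammatic zarev}; no new analytical input is needed beyond what powers \fullref{thm:main duplicate}.
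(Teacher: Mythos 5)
Your proposal is correct and follows essentially the same route as the paper: the paper simply makes items (1)--(2) tautological by defining $(M',\Gamma') = (M'',\Gamma'')\cup_{\SF\times\{1\}}(M\cup_{\partial M\times\{0\}}(\partial M\times[0,1]),\Gamma)$ with $\xi = \xi''\cup_{\SF\times\{1\}}\xi_{\Gamma}$, so the positive-versus-negative twist reconciliation you flag as the main obstacle is already absorbed into the discussion of \fullref{sec:bordered gluing}, and item (4) is proved exactly as you describe, by reducing to the handle-by-handle argument of \fullref{thm:main duplicate}. The only quibble is your justification of (3): a component of $M''$ disjoint from $F$ contains no collar piece, so your stated reason does not literally apply to it; such a component is non-isolated because its own boundary lies in $\partial M'$ unless it is closed, and the paper handles precisely these two cases (boundary entirely in $F$ ruled out by non-degeneracy of $\SF$, closed components ruled out by hypothesis).
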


\begin{proof}
Let $(M',\Gamma') = (M'',\Gamma'')\cup_{\SF\times\{1\}}(M\cup_{\partial M\times \{0\}} (\partial M\times [0,1]),\Gamma)$ and let $\xi = \xi''\cup_{\SF\times\{1\}}\xi_\Gamma$. It is clear that the first two conditions are satisfied by construction.

Furthermore, any isolated component of the proper inclusion $(M,\Gamma)\subset (M',\Gamma')$ would correspond to a component of $M''$ whose boundary is entirely contained in $F$, or a closed component of $M''$. Since bordered gluings are only allowed along proper sutured subsurfaces, the former case is not allowed, while the latter is excluded by hypothesis.

Since $\xi$ and $\xi''$ are constructed from $I$-invariant contact structures by attaching corresponding contact handles, we can argue as in the proof of \fullref{thm:main} that $\Phi_{\xi}$ and $\Psi_{\xi''}$ are equal up to graded isomorphism.
\end{proof}

\subsection{Triangle counts}
\label{sec:triangle maps}

We now verify that the maps in the proof of \fullref{lem:two handle maps} are graded isomorphisms. All these maps are induced by the standard holomorphic triangle counts associated to handleslides. For $\alpha$-handleslides, triangle counts induce chain maps of the form $\SFC(\Sigma,\bbeta, \balpha)\otimes \SFC(\Sigma,\balpha,\balpha') \to \SFC(\Sigma,\bbeta, \balpha')$, while triangle counts induce maps of the form $\SFC(\Sigma,\bbeta', \bbeta)\otimes \SFC(\Sigma,\bbeta,\balpha) \to \SFC(\Sigma,\bbeta', \balpha)$ for $\beta$-handleslides. We color $\balpha$-curves red and $\balpha'$-curves orange; we color $\bbeta$-curves blue and $\bbeta'$-curves purple.

We will decorate generators in each $(\Sigma,\bbeta,\balpha')$ or $(\Sigma,\bbeta',\balpha)$ with primes and leave generators in each $(\Sigma,\bbeta,\balpha)$ undecorated. We relabel after each computation, i.e. the generator $(c',z_3',y_0',\by')$ in $\HD_{4,4.5}$ is the generator $(c,z_3,y_0,\by)$ in $\HD_{4.5,5}$. We denote the highest degree element in each $(\Sigma,\balpha,\balpha')$ or $(\Sigma,\bbeta',\bbeta)$ by $\Theta$ and the lowest degree element by $\eta$.

We will make use of the following standard fact in our computations. Let $D$ and $D'$ be regions in a Heegaard diagram with a common edge in an oriented $\alpha$-curve $\alpha_0$, and let $\D$ be the domain of a Whitney polygon with holomorphic representative. If $\alpha_0$ abuts a basepoint region on the left and a basepoint region on the right, then the multiplicities of $\D$ on $D$ and $D'$ differ by at most 1. In particular, if the common edge is contained in $\partial\D$ and $D$ is a basepoint region, then $\D$ has multiplicity one on $D'$.

\begin{figure}
\labellist
	\begin{footnotesize}
	\pinlabel $w'$ at 250 126
  	\pinlabel $w$ at 226 123
	\pinlabel $y_0'$ at 230 86
  	\pinlabel $y_0$ at 190 86
 	\pinlabel $\Theta$ at 208 70
	\end{footnotesize}
 	\begin{large}
 		\pinlabel $D'$ at 190 103
 		\pinlabel $D$ at 210 90
 	\end{large}
\endlabellist
\includegraphics[width=1.0\textwidth]{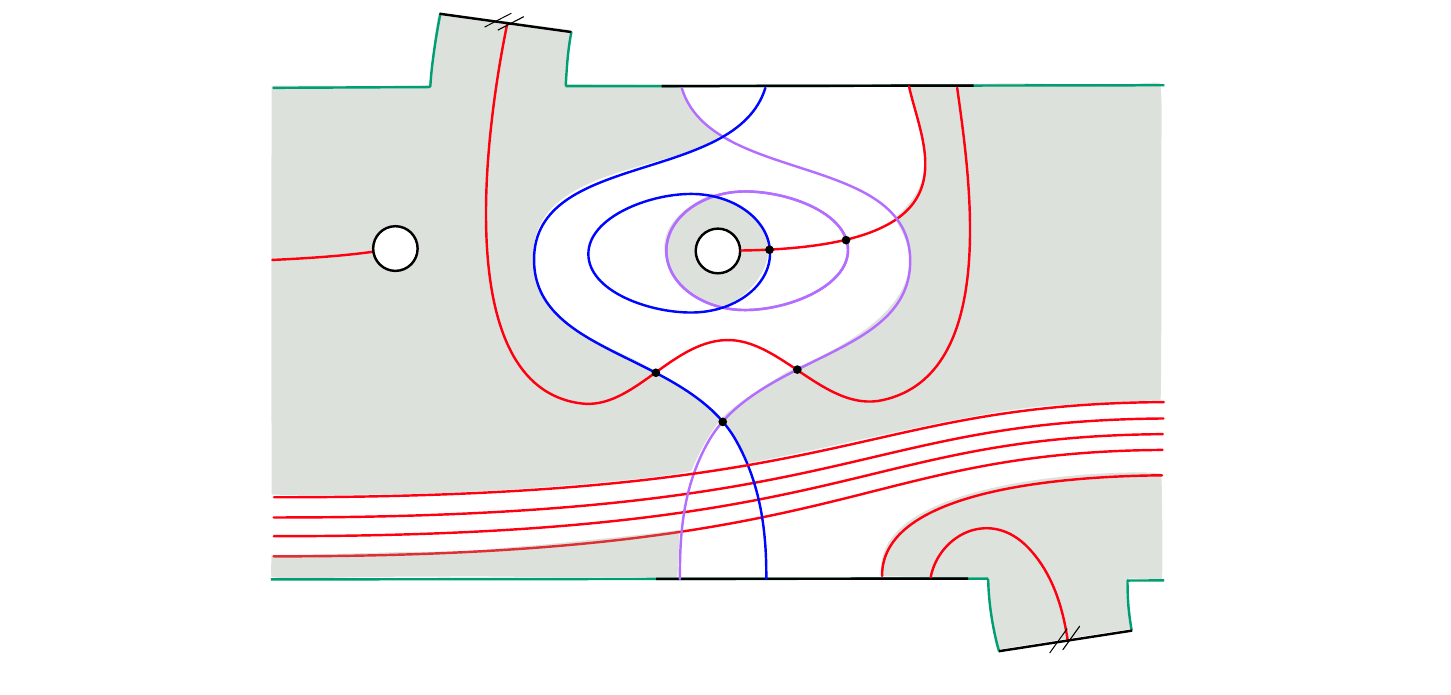}
  \caption{The diagram for $\Psi_{\HD_2,\HD_3}$.}
  \label{fig:nat 2 3}
\end{figure}

\begin{proposition}
\label{prop:nat 2 3}
$\Psi_{\HD_2,\HD_3}[(w,y_0,\by)] = [(w',y_0',\by')]$ .
\end{proposition}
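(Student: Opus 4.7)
The plan is to use the standard nearest-point analysis for the holomorphic triangle count associated to the $\beta$-handleslide from $\HD_2$ to $\HD_3$. Since the handleslide is supported in a small region containing $\beta_{g+1}, \beta_{g+2}$, and the new curve $\beta_{g+1}'$, the generators $\by$ and $\by'$ coincide pointwise away from this region, so each point of $\by$ contributes only its tautological degenerate triangle. It therefore suffices to analyze triangles joining the local configurations $\{w,y_0\}$ and $\{w',y_0'\}$ through the top-degree intersection $\Theta \in \SFC(\Sigma,\bbeta',\bbeta)$.

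Next I would exhibit the canonical small triangle class whose domain is the union of the two shaded regions $D$ and $D'$ in \fullref{fig:nat 2 3}. Each region is an embedded disk with three geodesic edges on the appropriate curves, so the combined domain supports a Whitney triangle with vertices at $\Theta$, $(w,y_0)$, and $(w',y_0')$; the Riemann mapping theorem guarantees a unique holomorphic representative in each Maslov index zero component, so this class contributes $(w',y_0',\by')$ to the image.

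The main obstacle is ruling out all other triangle classes that could produce an extra term. For this I would invoke the multiplicity argument from the end of the proof of \fullref{lem:two handle maps}: the curve $\beta_{g+1}'$ abuts the basepoint region on both sides outside the local picture, so any triangle domain whose boundary uses $\beta_{g+1}'$ must have multiplicity zero on those neighboring basepoint regions, forcing the domain to lie inside the local picture. An inspection of the local picture shows that $D \cup D'$ is the unique positive triangle domain connecting $\Theta \otimes (w,y_0)$ to any generator of the form $(\cdot,\cdot)$ in $\HD_3$, so no additional contributions arise.

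Combining these three steps yields
\[
\Psi_{\HD_2,\HD_3}[(w,y_0,\by)] = [\, f_{\bbeta'\bbeta\balpha}(\Theta \otimes (w,y_0,\by))\,] = [(w',y_0',\by')],
\]
as required. The argument is purely local and parallels the standard continuation-map computation for handleslides in Heegaard Floer theory.
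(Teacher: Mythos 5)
Your overall strategy---localize the handleslide triangle count, exhibit the unique small triangle at the new intersection points, and treat the rest of the diagram by a nearest-point argument---is the same in spirit as the paper's, but the execution of the key local count has concrete problems. First, your identification of the contributing domain as $D\cup D'$ is inconsistent with the constraint the paper's proof actually turns on: because the relevant curves abut suture (basepoint) regions on both sides, any triangle exiting $y_0$ must have multiplicity $1$ on $D$ and multiplicity $0$ on $D'$, and it is precisely this vanishing on $D'$ that forces uniqueness of the triangle carrying $y_0$ to $y_0'$. A class with multiplicity $1$ on $D'$ is therefore not the one counted, so your appeal to the Riemann mapping theorem is made for the wrong class; you have in effect guessed the geometry of \fullref{fig:nat 2 3}, and the guess contradicts it.

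Second, your exclusion of all other contributions rests on the unverified assertion that $\beta_{g+1}'$ abuts the basepoint region on both sides together with an ``inspection of the local picture,'' and it does not address triangles whose domains are supported at the other points of $\by$ or at $w^{\vee}$: away from the handleslide the $\bbeta'$-curves are small Hamiltonian translates of the $\bbeta$-curves, so the generators do not ``coincide pointwise,'' and the contributions there are genuine small triangles (the nearest-point count), not ``tautological degenerate triangles.'' The paper's proof closes this gap differently: after pinning down the unique triangle at $y_0$ via the multiplicity constraint, it erases the curves containing $y_0$ and $y_0'$ (legitimate because they abut suture regions on both sides, as in the admissibility discussion at the end of \fullref{lem:two handle maps}), and identifies the remaining triple diagram as that of a small Hamiltonian isotopy, whose triangle map is chain homotopic to the identity and hence sends $[(w,\by)]$ to $[(w',\by')]$. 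To make your argument complete you would need either to reproduce this erasure/factorization step or to give another argument ruling out large positive domains that spill into the rest of $\HD_2$; as written, that step is missing.
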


\begin{proof} Consider \fullref{fig:nat 2 3}. Note that any triangle exiting $y_0$ must have multiplicity 1 on $D$ and multiplicity 0 on $D'$. There is a unique such triangle, and it sends $y_0$ to $y_0'$. We can erase the curves which contain $y_0$ or $y_0'$, and the new diagram is the diagram for a small Hamiltonian isotopy. The corresponding triangle map is chain homotopic to the identity, and thus sends $[(w,\by)]$ to $[(w',\by')]$. We then have $\Psi_{\HD_2,\HD_3}[(w,y_0,\by)] = [(w',y_0',\by')]$.
\end{proof}

\begin{proposition} 
\label{prop:nat 4 5}
$\Psi_{\HD_3,\HD_4}[(c,z_3,y_0,\by)] = [(z_3',y_0',\by')]$.
\end{proposition}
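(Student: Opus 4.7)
The plan is to follow the decomposition from the proof of \fullref{lem:two handle maps}, where the destabilization relating the two diagrams is factored as two handleslides followed by a trivial destabilization. Correspondingly, I would write $\Psi_{\HD_3,\HD_4}$ as a composition of two handleslide triangle maps and one trivial destabilization map, and track the image of $(c,z_3,y_0,\by)$ through each stage.

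For each of the two handleslides, I would consider the local triangle diagram near the intersection point $c$ visible on the left of \fullref{fig:twisted handle destabilization}. The component generators $z_3$, $y_0$, and $\by$ lie in portions of the diagram untouched by the handleslide, so—just as in the proof of \fullref{prop:nat 2 3}—after erasing the pairs of curves that bound Hamiltonian-isotopy bigons, the triangle counts restricted to those components are chain homotopic to the identity and preserve the corresponding generators. This reduces the problem to a purely local analysis of the image of $c$ in the handleslide region.

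For the local count, I would identify a small region $D$ adjacent to $c$ whose neighbor $D'$ across a handleslid $\alpha$-arc is a basepoint region, and invoke the standard principle (used explicitly in \fullref{prop:nat 2 3}) that any $\alpha$-curve abutting basepoint regions on both sides forces the multiplicities of any Whitney polygon to differ by at most one across that edge. Combined with paring the canonical top generator $\Theta$ in the handleslide triple, this constrains any triangle exiting $c$ to have multiplicity $1$ on $D$ and $0$ on $D'$, leaving a unique holomorphic triangle. The effect of the two successive handleslides is therefore to send $c$ to the unique intersection point of the newly slid curves surviving in the small genus-$1$ piece before destabilization. The trivial destabilization then removes that piece and its unique generator, yielding the claimed image $[(z_3',y_0',\by')]$.

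The main obstacle I expect is the bookkeeping required to verify uniqueness of the holomorphic triangle out of $c$ at each of the two handleslide steps: the configuration of curves in \fullref{fig:twisted handle destabilization} is nontrivial and the local picture must be drawn carefully to confirm that no additional region contributes to the count. Once these two local counts are settled, the remainder of the argument reduces to the same Hamiltonian-isotopy/basepoint-abutment machinery already used for \fullref{prop:nat 2 3}.
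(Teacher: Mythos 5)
Your overall skeleton agrees with the paper's: factor the destabilization $\HD_4\to\HD_5$ as two handleslides followed by a trivial destabilization, compute each handleslide triangle map, and dispose of the untouched generators by erasing curves and appealing to the small-Hamiltonian-isotopy count. However, there is a genuine gap in where you locate the nontrivial work. You assert that $z_3$, $y_0$, and $\by$ lie in parts of the diagram untouched by the handleslides and reduce everything to a local count at $c$. This is false for $z_3$: the handleslides take place precisely in the $\HD_U\cup\HD_{AZ}$ portion of $\HD_4$ (the first slide is of $\alpha_{g+2}'$ over $\alpha_{g+3}$, cf.\ \fullref{fig:twisted handle destabilization}), and $z_3$ lies on the slid curve $\alpha_{g+2}'$, so it is a corner of nonconstant triangles and must be re-identified after each slide. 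The main content of the paper's proof of \fullref{prop:nat 4 5} is exactly this: showing, in both \fullref{fig:nat 4 4.5} and the left of \fullref{fig:nat 4.5 5}, that any triangle leaving $z_3$ has multiplicity $0$ on $D_1$ and $D_3$ and $1$ on $D_2$ (using that $\alpha_{g+2}'$ abuts basepoint regions on both sides), so that the unique contribution sends $z_3\mapsto z_3'$ rather than to $z_1'$, $z_2'$, or a sum of points in the strip. Without this step the asserted image $[(z_3',y_0',\by')]$ --- i.e.\ that the output still records the idempotent $\iota_2^\vee$ --- is not justified, and it is precisely the "bookkeeping" you defer.

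Your treatment of $c$ is also not what the diagrams support. In the first handleslide $c$ needs no local count at all: after the $z_3$ analysis, erasing curves leaves a small Hamiltonian isotopy, which carries $c$, $y_0$, and $\by$ to their primed counterparts. In the second step the paper does analyze $c$, but the constraint is not basepoint abutment across a slid $\alpha$-arc; it is that no triangle in the count can have a corner at the bottom class $\eta$, which forces multiplicity $0$ on $D'$ and leaves the unique triangle with domain $D$ entering $c'$ (right of \fullref{fig:nat 4.5 5}). So the two local arguments you would need are (i) the multiplicity argument for $z_3$ at both stages, which you omit, and (ii) the $\eta$-argument for $c$ at the second stage, for which your proposed basepoint-adjacent configuration does not match the actual diagram. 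With these repaired, the final trivial destabilization removing the piece containing $c$ does yield $[(z_3',y_0',\by')]$ as you state.
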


\begin{figure}
\labellist
	\begin{footnotesize}
  	\pinlabel $\Theta$ at 238 35
  	\pinlabel $z_3'$ at 285 75
 	\pinlabel $z_3$ at 275 24
 	\end{footnotesize}
 	\begin{large}
 	\pinlabel $D_1$ at 240 18
  	\pinlabel $D_2$ at 265 40
  	\pinlabel $D_3$ at 300 50
 	\end{large}
\endlabellist
\includegraphics[width=1.0\textwidth]{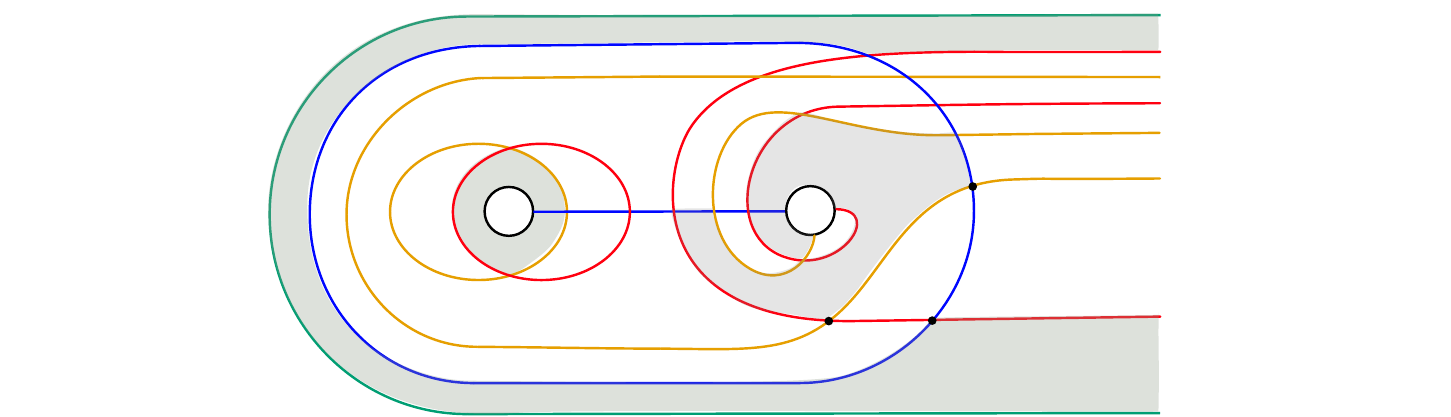}
  \caption{The diagram for $\Psi_{\HD_4,\HD_{4.5}}$.}
  \label{fig:nat 4 4.5}
\end{figure}

\begin{proof}

Let $\HD_{4.5}$ be the diagram obtained by handlesliding $\alpha_{g+2}'$ over $\alpha_{g+3}$ and consider the triple diagram in \fullref{fig:nat 4 4.5} for this move. Any triangle for $\Psi_{\HD_4,\HD_{4.5}}$ exiting $z_3$ must have must have multiplicity 0 on $D_1$ and $D_3$ and multiplicity 1 on $D_2$; this follows from the observation that the curve $\alpha_{g+2}'$ abuts a basepoint region on both the left and the right. (This cannot be seen in \fullref{fig:nat 4 4.5}, but it is true in the complete diagram.) There is a unique such triangle; it enters $z_3'$. Erasing curves yields a diagram for a small Hamiltonian isotopy, so that $\Psi_{\HD_4,\HD_{4.5}}[(c,z_3,y_0,\by)] = [(c',z_3',y_0',\by')]$.

\begin{figure}

\labellist
	\begin{footnotesize}
	\pinlabel $\Theta$ at 135 37
  	\pinlabel $z_3'$ at 180 61
 	\pinlabel $z_3$ at 175 25
 	
 	\pinlabel $c'$ at 299 66
	\pinlabel $c$ at 313 64
  	\pinlabel $\eta$ at 390 73
	\pinlabel $\Theta$ at 270 41
	\end{footnotesize}
	
	\begin{large}
  	\pinlabel $D_1$ at 147 22
  	\pinlabel $D_2$ at 163 40
  	\pinlabel $D_3$ at 190 40
  	
  	\pinlabel $D'$ at 317 40
	\pinlabel $D$ at 297 50
 	\end{large}
\endlabellist
\includegraphics[width=1.0\textwidth]{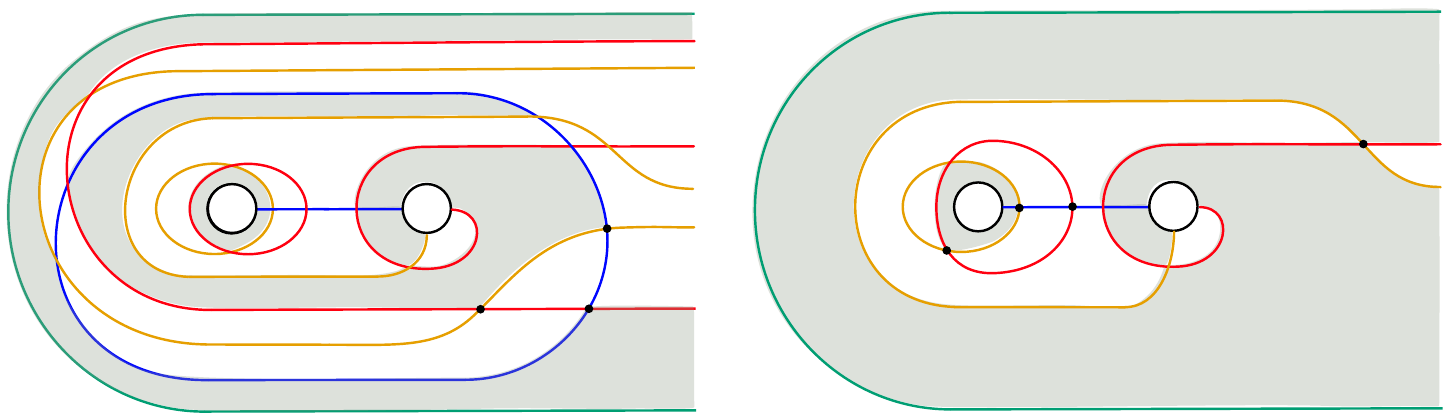}
  \caption{The diagrams for $\Psi_{\HD_{4.5},\HD_5}$.}
  \label{fig:nat 4.5 5}
 
\end{figure}

To compute $\Psi_{\HD_{4.5},\HD_5}$, consider the diagram on the left of \fullref{fig:nat 4.5 5}. Any triangle exiting $z_3$ must have multiplicity 0 on $D_1$ and $D_3$ and multiplicity 1 on $D_2$. There is a unique such triangle; it enters $z_3'$. Erasing curves yields the diagram on the right of \fullref{fig:nat 4.5 5}. Since no triangle in our count can enter $\eta$, all triangles must have multiplicity 0 on $D'$. There is a unique such triangle exiting $c$; its domain is the region $D$, and it enters $c'$. Erasing curves yields a diagram for a small Hamiltonian isotopy, and the corresponding triangle map sends $[(c,z_3,y_0,\by)]$ to  $[(c',z_3',y_0',\by')]$. We now obtain $\HD_5$ by performing a trivial destabilization. The associated map sends $[(c,z_3,y_0,\by)]$ to $[(z_3',y_0',\by')]$ so that $\Psi_{\HD_{4.5},\HD_5}[(c,z_3,y_0,\by)] = [(z_3',y_0',\by')]$.
\end{proof}

\begin{figure}
\labellist

	\begin{footnotesize}
	\pinlabel $x_0'$ at 122 93
  	\pinlabel $x_0$ at 80 85
	\pinlabel $z_2'$ at 52 46
	\pinlabel $z_2$ at 34 63
 	\pinlabel $\Theta$ at 19 80
 	\pinlabel $\Theta$ at 108 52

    \pinlabel $x_0'$ at 300 97
  	\pinlabel $x_0$ at 337 98
	\pinlabel $z_2'$ at 354 73
	\pinlabel $z_2$ at 331 64
 	\pinlabel $\Theta$ at 303 55
 	\pinlabel $\Theta$ at 318 48
 	\pinlabel $D_1$ at 317 98
	\pinlabel $D_2$ at 317 111
	\pinlabel $D_3$ at 344 113
	
	\end{footnotesize}
	
	\begin{large}
 	\pinlabel $D$ at 100 70
	\pinlabel $D'$ at 120 65

 	\end{large}
\endlabellist
\includegraphics[width=1.0\textwidth]{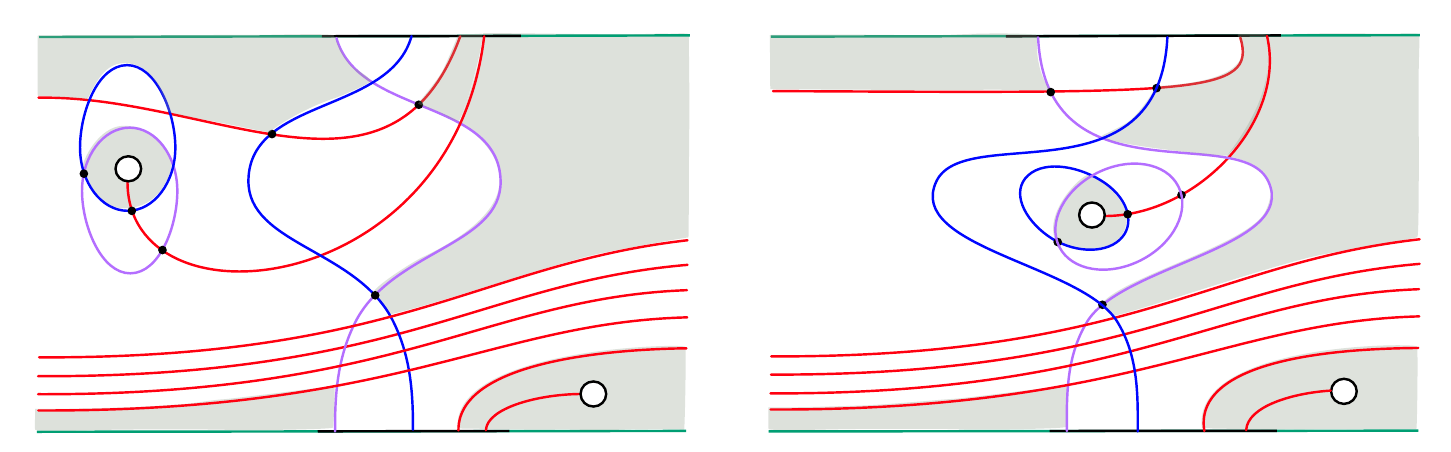}
  \caption{$\Psi_{\HD_{5},\HD_{5.5}}$ and $\Psi_{\HD_{5.5},\HD_{6}}$.}
  \label{fig:nat 5 6}
\end{figure}

\begin{proposition}
\label{prop:nat 5 6}
$\Psi_{\HD_5,\HD_6}[(z_2,x_0,\by)] = [(x_0',\by')]$.
\end{proposition}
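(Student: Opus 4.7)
The plan is to factor $\Psi_{\HD_5,\HD_6}$ through the intermediate diagram $\HD_{5.5}$ pictured in \fullref{fig:nat 5 6}, writing $\Psi_{\HD_5,\HD_6}$ as $\Psi_{\HD_{5.5},\HD_6}\circ\Psi_{\HD_5,\HD_{5.5}}$. Here $\HD_{5.5}$ is the result of performing the small Hamiltonian isotopy that removes $z_1$ and $z_3$ from $\HD_5$, while the transition $\HD_{5.5}\to\HD_6$ absorbs the handleslide of $\beta_{g+1}$ over $\beta_{g+2}'$ followed by the trivial destabilization. This mirrors the bookkeeping strategy used in \fullref{prop:nat 2 3} and \fullref{prop:nat 4 5}.

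For the first step, I would analyze the triple diagram on the left of \fullref{fig:nat 5 6}. Any holomorphic triangle contributing a term entering $(z_2', x_0', \by')$ must satisfy strict multiplicity constraints on the regions $D$ and $D'$: in the completed diagram, the curve separating these regions abuts basepoint regions on both sides, so by the standard argument invoked repeatedly in \fullref{sec:triangle maps} the triangle's multiplicity on each of $D$ and $D'$ is forced. This pins down a unique triangle domain with corners $(z_2, \Theta, z_2')$ and, analogously, a unique triangle with corners $(x_0, \Theta, x_0')$. Erasing the $\beta$-curves involved in these triangles reduces the remaining picture to the standard diagram for a small Hamiltonian isotopy on the $\by$-factor, whose triangle map is chain homotopic to the identity. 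Together these give $\Psi_{\HD_5,\HD_{5.5}}[(z_2, x_0, \by)] = [(z_2', x_0', \by')]$.

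For the second step, I would run the same kind of analysis on the right diagram of \fullref{fig:nat 5 6}, now using multiplicity constraints on $D_1, D_2, D_3$ to isolate the unique triangles exiting $z_2$ and $x_0$. As in the proof of \fullref{prop:nat 4 5}, after accounting for these triangles one erases the corresponding curves and is left with a diagram that encodes a trivial destabilization — precisely the destabilization that removes the intersection point $z_2'$ from the list of generators. This destabilization sends $(z_2', x_0', \by')$ to $(x_0', \by')$, yielding $\Psi_{\HD_{5.5},\HD_6}[(z_2', x_0', \by')] = [(x_0', \by')]$. Composing the two steps produces the desired equality.

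The main obstacle is not conceptual but combinatorial: one must verify, using the full (not merely local) diagram, that the relevant $\beta$-curves on both sides of \fullref{fig:nat 5 6} abut basepoint regions on both of their sides, so that the multiplicity argument forces uniqueness of the counted triangles. Once this is in place, each step is formally identical to the triangle counts already carried out in \fullref{prop:nat 2 3}, \fullref{prop:nat 4 5}, and the proof reduces to routine diagram inspection.
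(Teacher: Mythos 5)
Your proposal is correct and follows essentially the same route as the paper: factor through the intermediate diagram $\HD_{5.5}$ given by the finger move, pin down the unique triangles at $x_0$ (and $z_2$) via the basepoint-region multiplicity constraints on $D,D'$ and $D_1,D_2,D_3$, erase curves to reduce to a small Hamiltonian isotopy, and finish with the trivial destabilization removing $z_2'$, exactly as in \fullref{prop:nat 2 3} and \fullref{prop:nat 4 5}. The only cosmetic difference is bookkeeping: the paper tracks $z_2\to z_2'$ through the erase-curves/Hamiltonian-isotopy step and treats the destabilization as a separate trivial map, while you fold the handleslide and destabilization into one composite, which yields the same conclusion.
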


\begin{proof} Let $\HD_{5.5}$ be the diagram obtained by performing the obvious finger move on $\HD_{5}$; diagrams for $\Psi_{\HD_{5},\HD_{5.5}}$ and $\Psi_{\HD_{5.5},\HD_{6}}$ are shown in \fullref{fig:nat 5 6}. On the left, any triangle exiting $x_0$ must have multiplicity one on $D$ and $D'$. There is a unique such triangle; it enters $x_0'$. On the right, any triangle exiting $x_0$ must have multiplicity one on $D_2$ and multiplicity zero on $D_1$ and $D_3$. There is a unique such triangle; it enters $x_0'$.

In each diagram in \fullref{fig:nat 5 6}, erasing curves yields a diagram for a small Hamiltonian isotopy. Each corresponding triangle map sends $[(z_2,x_0,\by)]$ to $[(z_2',x_0',\by')]$. We then obtain $\HD_6$ by performing a trivial destabilization; the corresponding map sends $[(z_2,x_0,\by)]$ to $[(x_0',\by')]$.
\end{proof}

Note that the diagrams used in \fullref{prop:nat 2 3}, \fullref{prop:nat 4 5}, and \fullref{prop:nat 5 6} are all admissible, since the admissibility argument at the end of \fullref{lem:two handle maps} holds for triply-periodic domains in triple diagrams. Each of the diagrams can be reduced to a diagram for a small Hamiltonian isotopy by erasing curves which abut suture regions on both sides. Since the diagram for a small Hamiltonian isotopy in an admissible diagram is itself admissible, so is each diagram in \fullref{prop:nat 2 3}, \fullref{prop:nat 4 5}, and \fullref{prop:nat 5 6}.

\section{Applications} 
\label{sec:applications}

Our first corollary is the extension of contact gluing maps to the bordered sutured category, \fullref{cor:bordered hkm} from \fullref{sec:intro}. This corollary is an essential ingredient in \cite{EVZ17}.

\begin{corollary}
\label{cor:bordered hkm duplicate}
Let $\M=(M,\gamma,\SF,\SZ)$ and $\M'=(M',\gamma',\SF,\SZ)$ be bordered sutured manifolds with $M\subset M'$ and $M'\setminus int(M)$ a sutured manifold equipped with a compatible contact structure $\xi$ and no isolated components. Let $\HD_M$ and $\HD_{M'}$ be admissible diagrams for $-\M$ and $-\M'$, respectively. Then there exists a map of type-D structures induced by $\xi$
\[
	\phi_{\xi}: \BSD(\HD_M)\to\BSD(\HD_{M'}),
\]
satisfying the following property. If $\SN = (N,\gamma_N,-\SF,-\SZ)$ is a bordered sutured manifold with diagram $\HD_N$ for $-\SN$, then the map
\[
\phi_{\xi}\boxtimes\id_{\BSA(\HD_{\SN})}: \BSA(\HD_\M)\boxtimes\BSD(\HD_\SN)\to\BSA(\HD_{\M'})\boxtimes\BSD(\HD_{\SN})
\]
induces the contact gluing map $\Phi_{\xi}:\SFH(-\M\cup_{-\SF}-\SN)\to \SFH(-\M'\cup_{-\SF}-\SN)$ up to graded homotopy equivalence.

Similar statements hold for the type-A and bimodule structures found in the bordered sutured theory. 
\end{corollary}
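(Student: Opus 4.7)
The plan is to construct $\phi_\xi$ via a contact handle decomposition of $\xi$ on $M'\setminus\INT(M)$, mirroring the sutured construction of \fullref{sec:hkm handles}. By the no-isolated-components hypothesis (cf.\ \fullref{rmk:isolated components}), $\xi$ admits a decomposition into contact $1$- and $2$-handles $h^{i_1},\ldots,h^{i_n}$ attached sequentially to a collar of $\partial M$. Each handle is attached in the sutured portion of $\partial M$, disjoint from the parametrized surface $\SF$, so the diagrammatic handle attachments of \fullref{def:simple maps} extend verbatim to the bordered sutured diagram $\HD_M=(\Sigma,\balpha,\bbeta,\SZ)$, yielding a sequence of bordered sutured diagrams $\HD_M=\HD_{M_0},\HD_{M_1},\ldots,\HD_{M_n}=\HD_{M'}$ in which each step modifies $\Sigma$ only within a region disjoint from $\SZ$.

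Define bordered diagrammatic maps $\phi_{i_j}\colon\BSD(\HD_{M_{j-1}})\to\BSD(\HD_{M_j})$ by $\phi_1(\by)=\by$ for a $1$-handle and $\phi_2(\by)=(\by,x_0)$ for a $2$-handle, exactly as in \fullref{def:simple maps}, and set $\phi_\xi=\phi_{i_n}\circ\cdots\circ\phi_{i_1}$. Each $\phi_{i_j}$ is a type-D morphism because any new holomorphic disks created by the handle attachment are local to the handle region and hence carry no Reeb chord output along $\SZ$; this reproduces exactly the sutured-level calculation used to check that $\sigma_{i_j}$ is a chain map. To verify the pairing property, observe that by Zarev's pairing theorem the complex $\BSA(\HD_M)\boxtimes\BSD(\HD_\SN)$ is $\A_\infty$-homotopy equivalent to $\SFC(\HD_M\cup_\SF\HD_\SN)$, and this equivalence is natural with respect to Heegaard modifications disjoint from $\SZ$. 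Consequently $\phi_\xi\boxtimes\id$ is $\A_\infty$-homotopy equivalent to the composition of the corresponding sutured diagrammatic handle maps on $\SFC(\HD_M\cup_\SF\HD_\SN)$, which by \fullref{lem:padded handles decomposition} induces the HKM map $\Phi_\xi$.

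Well-definedness of $\phi_\xi$ up to graded homotopy equivalence, independent of the handle decomposition and auxiliary choices, follows from a Morita-type argument: a morphism of type-D structures is determined up to homotopy by its pairings with all type-A structures, and each such pairing here induces the well-defined HKM map $\Phi_\xi$ from \cite{HKM3}. The statements for $\BSA$ and the bimodule variants follow by symmetric arguments, swapping sides of the pairing theorem. The main obstacle is assembling the various $\A_\infty$-level homotopy equivalences coherently: one must combine invariance of $\BSD$ under Heegaard moves away from $\SF$, naturality of Zarev's pairing theorem under such moves, compatibility of the bordered diagrammatic maps with the type-D structure maps, and the sutured equivalence with HKM maps from \fullref{lem:padded handles decomposition}, while tracking a Morita-type determinability argument at the $\A_\infty$ level throughout.
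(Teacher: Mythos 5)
Your construction diverges from the paper's (which defines $\phi_\xi$ by partially evaluating Zarev's algebraic join $\id_{U'}\boxtimes\nabla_{W'}\boxtimes\id_{V'}$ on a cycle representing $\EH(\xi'')$, and gets the pairing property for free from the $\A_\infty$-functoriality of $\boxtimes$ together with \fullref{thm:main duplicate}), and the divergence is where the gaps lie. The load-bearing step in your argument is the claim that the pairing-theorem equivalence $\BSA(\HD_M)\boxtimes\BSD(\HD_\SN)\simeq\SFC(\HD_M\cup_{\SF}\HD_\SN)$ is ``natural with respect to Heegaard modifications disjoint from $\SZ$,'' so that $\phi_\xi\boxtimes\id$ can be identified with the sutured diagrammatic maps. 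No such naturality is available: the paper is explicit (\fullref{rmk:derived category}, \fullref{rmk:zarev naturality}) that bordered sutured invariants and Zarev's gluing map are only defined up to graded homotopy equivalence and are not known to be natural, and the whole point of the paper's route through $\nabla_{W'}$ and Lemma 2.3.13 of \cite{LOT15} is to avoid ever needing such a statement. Without it, you have a bordered diagrammatic map on one specific pair of diagrams, but no argument that its box tensor with $\id_{\BSA(\HD_\SN)}$ agrees, under the (non-canonical) pairing equivalences, with any sutured chain map you can compute. Relatedly, even granting that identification, citing \fullref{lem:padded handles decomposition} is not enough: that lemma produces particular contact-compatible diagrams on which diagrammatic maps compute the HKM map, whereas $\HD_M\cup_{\SF}\HD_\SN$ is an arbitrary admissible diagram; you need the full strength of \fullref{cor:simple maps} (hence the main theorem's machinery), not the lemma alone.

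The ``Morita-type'' well-definedness argument is also unsupported and, as stated, false for your purposes: knowing only that each pairing induces the HKM map on homology, and only up to graded isomorphism, does not determine a type-D morphism up to homotopy (it does not even determine the paired chain maps up to homotopy). The paper instead inherits well-definedness from Zarev's proof that $\nabla_{W'}$ is well defined up to $\A_\infty$-homotopy. Finally, your assertion that each bordered inclusion $\by\mapsto\by$ or $\by\mapsto(\by,x_0)$ is a type-D structure morphism is plausible but is asserted rather than checked; the verification must account for structure maps with algebra outputs along $\SZ$, not just closed disks, even though the handle region is disjoint from $\SZ$. In short: the skeleton (handle decomposition, diagrammatic maps, pairing) is reasonable, but the two claims doing the real work -- naturality of the pairing equivalence under Heegaard moves and determinability of $\phi_\xi$ by its pairings -- are exactly the statements that are not known, and the paper's proof is structured to sidestep them.
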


\begin{proof}
Bordered gluing operations and the bordered gluing map extend to the bordered sutured category. Using the notation of \fullref{sec:computing the pairing}, a bordered gluing map
\[
\Psi_{\SF}:\SFH(M_1,\Gamma_1)\otimes\SFH(M_2,\Gamma_2)\to \SFH((M_1,\Gamma_1)\cup_{\SF}(M_2,\Gamma_2))
\]
is induced by a map of the form
\[
\id_{U}\boxtimes\nabla_W\boxtimes\id_{V}:U\boxtimes W\otimes W^{\vee}\boxtimes V \to U\boxtimes A^{\vee}\boxtimes V.
\]

Zarev \cite{Zar11} proves that $\Psi_\SF$ is well-defined by showing that the algebraic join $\nabla_W: W\otimes W^{\vee}\to A^{\vee}$ is well-defined up to $\A_{\infty}$-homotopy, and using the fact that $(\boxtimes V,\boxtimes\id_V)$ is a $dg$-functor from the category of $\A_{\infty}$-modules to the category of chain complexes; see \cite{LOT15} for explanation.

Note that the bordered gluing operation $\cup_{\SF}$ as discussed in \fullref{sec:bordered gluing} extends to partially sutured and bordered sutured manifolds. The role of one or both $(M_i,\Gamma_i)$ can be replaced by $\M_i = (M_i,\gamma_i,\SF_i,\SZ)$ with $(\pm F,\gamma)\subset (\partial M_i)\setminus F_i$. As in \fullref{thm:main}, there is a sutured manifold $(M'',\Gamma'')$ such that $(M'',\Gamma'')\cup_{\SF'} \M \cong \M'$, where the dividing set on $\SF'$ is  disk-decomposible. Furthermore, we can choose $\SF'$ such that the dividing set has no closed components, so that we can choose a parametrization of $\SF'$ by an arc diagram $\SZ'$ so that the dividing set is elementary with respect to $\SZ'$. Let $A'$ be the associated bordered algebra. Let $\HD_{W'}$ be a diagram for the cap $\W'$ for the dividing set on $\SF'$; let $\HD_{U'}$ be a diagram for $(-M'',-\Gamma'')\setminus -\W'$; and let $\HD_{V'}$ be a diagram for $(-\M\setminus -\overline{\W'}$. Let $U', V'$, and $W'$ be the bordered modules $\BSD(\HD_{U'}),\BSDD(\HD_{V'})$, and $\BSA(-\HD_{W'})$ respectively. Note that $W'$ has a single generator $w$, since the dividing set on $\SF'$ is elementary by Proposition 15.1.2 in \cite{Zar11}. We can define a map

\[
\psi_{-\SF'} = \id_{U'}\boxtimes\nabla_{W'}\boxtimes\id_{V'}: U'\boxtimes W'\otimes (W')^{\vee}\boxtimes V'\to U'\boxtimes (A')^{\vee}\boxtimes V'.
\]
which is well-defined up to graded homotopy equivalence of Type-D structures, since by Lemma 2.3.13 of \cite{LOT15}, $\boldsymbol{\cdot}\boxtimes \id_{V'}$ is an $\A_{\infty}$-functor.

Similar to \fullref{thm:main}, note that there is a compatible contact structure $\xi''$ on $M''$ such that $\xi$ is contactomorphic to $\xi''$ attached to an $I$-invariant contact structure. Since $U'\boxtimes W'$ is homotopy equivalent to $\SFC(-M'',-\Gamma'')$, we can define $\phi_{\xi}:(W')^{\vee}\boxtimes V\to U'\boxtimes (A')^{\vee}\boxtimes V'$ by 
\[
\phi_{\xi}(w^{\vee}\boxtimes \by) = \psi_{-\SF'}(\bx\boxtimes w,w^{\vee}\boxtimes\by),
\]
where $\bx\boxtimes w$ represents $\EH(\xi'')$.

Denote $\BSD(\HD_N)$ by $X$. Since $(W')^{\vee}\boxtimes V'\boxtimes X \simeq \SFC(\HD_M\cup_{\SF}\HD_N)$ and $U'\boxtimes (A')^{\vee}\boxtimes V'\boxtimes X\simeq \SFC(\HD_{M'} \cup_{\SF}\HD_N)$, the bordered contact gluing map $\Psi_{\xi''}$ is induced by evaluating
\[
\id_{U'}\boxtimes\nabla_{W'}\boxtimes\id_{V'\boxtimes X}
\]
on 
$\bx\boxtimes w$, while $\phi_{\xi}\boxtimes\id_X$ is the evaluation of
\[
\id_{U'}\boxtimes\nabla_{W'}\boxtimes\id_{V'}\boxtimes\id_X
\]
on $\bx \boxtimes w$. It is clear that these maps are equal up to homotopy equivalence since $\id_{V'}\boxtimes\id_{X}$ is homotopic to $\id_{V'\boxtimes X}$. (This is from Lemma 2.3.13 in \cite{LOT15}.)

Note that by \fullref{thm:main}, $\Psi_{\xi''}$ agrees with the contact gluing map $\Phi_\xi$ up to graded isomorphism. Also, since $\psi_{-\SF'}$ is defined up to graded homotopy equivalence, so is $\phi_{\xi}$, so we can define $\phi_{\xi}:\BSD(\HD_M)\to\BSD(\HD_{M'})$ by using homotopy equivalences $W'\boxtimes V' \simeq \BSD(\HD_M)$ and $U'\boxtimes(A')^{\vee}\boxtimes V'\simeq \BSD(\HD_{M'})$.

This completes the proof for type-$D$ structures. Similar arguments hold for type-$A$ modules and bimodules.
\end{proof}

We also obtain an independent proof of Juh\'asz and Zemke's result that diagrammatic maps agree with the corresponding contact gluing maps; see \cite{JuZe20}. This is \fullref{cor:simple maps} in \fullref{sec:intro}.

\begin{corollary}
\label{cor:simple maps duplicate}
Given a diagram $\HD$ for $(-M,-\Gamma)$, there is a diagram for $(-M_i,-\Gamma_i)$ such that the $\HKM$ map $\Phi_{i}:\SFH(-M,-\Gamma)\to\SFH(-M_i,-\Gamma_i)$ is induced by the diagrammatic map $\sigma_i$ up to graded isomorphism. Furthermore, given any other diagrammatic attachment for $(-M_i,-\Gamma_i)$, the associated diagrammatic map $\sigma_i'$ also induces $\Phi_i$ up to graded isomorphism.
\end{corollary}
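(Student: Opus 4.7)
The plan is to derive both assertions directly from \fullref{lem:one handle maps}, \fullref{lem:two handle maps}, and \fullref{thm:main duplicate}. The first statement is essentially immediate: for any admissible diagram $\HD$ for $(-M,-\Gamma)$, those two lemmas exhibit at least one diagrammatic $i$-handle attachment $\HD_i$ so that $(\sigma_i)_*:\SFH(\HD)\to\SFH(\HD_i)$ agrees, up to post- and pre-composition with graded isomorphisms, with the bordered contact gluing map $\Psi_i$. By \fullref{thm:main duplicate}, $\Psi_i$ and the $\HKM$ map $\Phi_i$ agree up to graded isomorphism, so $(\sigma_i)_*$ induces $\Phi_i$ up to graded isomorphism.

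For the second statement, the plan is to show that any two diagrammatic maps $\sigma_i, \sigma_i'$ associated to $i$-handle attachments on (possibly different) admissible diagrams for $(-M,-\Gamma)$ agree on homology up to graded isomorphism. Given admissible diagrams $\HD,\HD'$ for $(-M,-\Gamma)$ and diagrammatic attachments $\HD_i,\HD_i'$ for $(-M_i,-\Gamma_i)$, I would connect $\HD$ to $\HD'$ by a sequence of admissible Heegaard moves (isotopies, handleslides, and stabilizations) performed in the complement of the small region of $\partial M$ used to specify the handle attachment; this is possible because the attachment region is a local neighborhood of a point (for $i=1$) or a neighborhood of the attaching curve and a small perturbation region (for $i=2$), and Heegaard moves can always be pushed away from such regions. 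The same sequence of moves, combined with a small Hamiltonian isotopy identifying the two preferred intersections $x_0, x_0'$ in the 2-handle case, gives a sequence of admissible Heegaard moves connecting $\HD_i$ to $\HD_i'$.

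The key observation is that each such move induces a change-of-diagram graded isomorphism that intertwines $\sigma_i$ with $\sigma_i'$. Concretely, for a move performed in the complement of the attaching data, the induced chain map on $\SFC(\HD_i)$ restricts to the corresponding map on $\SFC(\HD)$ tensored with the identity on the preferred intersection factor (trivially so for 1-handles, and via the factor $x_0$ for 2-handles). This yields a commutative square
\[
\begin{tikzcd}
\SFC(\HD) \arrow[r,"\sigma_i"] \arrow[d,"\simeq"]
&\SFC(\HD_i)\arrow[d,"\simeq"]\\
\SFC(\HD') \arrow[r,"\sigma_i'"]
&\SFC(\HD_i')
\end{tikzcd}
\]
up to chain homotopy, whose vertical arrows are graded homotopy equivalences. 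Combined with the first assertion, this forces $\sigma_i'$ to also induce $\Phi_i$ up to graded isomorphism.

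The main obstacle will be the explicit verification that the transition maps for each type of Heegaard move (isotopy, handleslide, stabilization) intertwine with the diagrammatic map. The isotopy and handleslide cases reduce to the fact that holomorphic triangles in the complement of the attachment region do not involve $x_0$, so the transition map acts as $\by\mapsto\by$ on the $\HD$-factor and as $x_0\mapsto x_0'$ on the preferred intersection factor. The stabilization case is slightly more delicate, but any stabilization can be chosen to be supported away from both the attached strip and the preferred intersection, so the same separation argument applies. Handling the minor ambiguity in the choice of $x_0$ (coming from the perturbation convention for $b_k$ versus $a_k$) requires only a small Hamiltonian isotopy, which induces the identity on homology.
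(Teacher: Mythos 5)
Your first paragraph is fine: \fullref{lem:one handle maps} and \fullref{lem:two handle maps} identify $(\sigma_i)_*$ with $\Psi_i$ for the given admissible diagram, and $\Psi_i=\Phi_i$ up to graded isomorphism (the paper gets this from \fullref{lem:padded handles decomposition} together with those two lemmas; routing it through \fullref{thm:main duplicate} also works). But for the ``furthermore'' statement you abandon the mechanism that makes the paper's proof short and instead try to prove well-definedness of the diagrammatic maps directly by Heegaard moves, and that argument has a genuine gap. Your key geometric claim --- that the moves connecting $\HD$ to $\HD'$ can be pushed off the attachment data because that data is ``a local neighborhood of a point'' or ``a neighborhood of the attaching curve and a small perturbation region'' --- is false for contact $2$-handles. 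The diagrammatic $2$-handle attachment uses the arcs $a=\delta\cap R_-(\Gamma)$ and $b=\delta\cap R_+(\Gamma)$, which are realized as arcs running through the interior of the Heegaard surface and which in general cross the old $\alpha$- and $\beta$-curves; moreover which arc in $\Sigma$ represents $b\subset R_+$ depends on the $\beta$-curves themselves (since $R_+$ is obtained by compressing along them), so after a handleslide or stabilization of $\HD$ the attaching curve must be re-realized and two diagrammatic attachments for the same $h^2$ can intersect the old curves in essentially different patterns. Hence there is no reason a sequence of admissible moves from $\HD$ to $\HD'$ can be made disjoint from the new curves $\alpha_0,\beta_0$, and the asserted commutative square intertwining $\sigma_2$ with $\sigma_2'$ is exactly the nontrivial content (it is Juh\'asz--Zemke's theorem, which this corollary is meant to reprove independently), not a routine ``triangles avoid $x_0$'' verification.

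The paper's proof avoids this entirely: \fullref{lem:one handle maps} and \fullref{lem:two handle maps} apply to \emph{any} admissible diagrammatic attachment, so both $(\sigma_i)_*$ and $(\sigma_i')_*$ equal bordered contact gluing maps $\Psi_i$ and $\Psi_i'$ up to graded isomorphism, and these agree because Zarev's map $\Psi_{-\SF}$ is well-defined up to graded isomorphism (\fullref{lem:well defined}, \fullref{rmk:zarev naturality}). That two-line argument is already available to you from your first paragraph; replacing your Heegaard-move construction with it closes the gap.
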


\begin{proof}
The first statement is immediate from \fullref{lem:padded handles decomposition}. By \fullref{lem:one handle maps} and \fullref{lem:two handle maps}, both $(\sigma_i)_*$ and $(\sigma_i')_*$ are equal to bordered contact gluing maps $\Psi_i$ and $\Psi_i'$ respectively, up to graded isomorphism. Since the map $\Psi_{-\SF}$ is well-defined, $\Psi_i$ and $\Psi_i'$ are equal up to graded isomorphism. Then $(\sigma_i)_*$ and $(\sigma_i')_*$ are also equal up to graded isomorphism.
\end{proof}

As a final application, we use \fullref{cor:simple maps duplicate} to show that the HKM map can be computed using nice diagrams by  extending Plamenevskaya's application of the Sarkar--Wang algorithm in \cite{Pla07}. This is \fullref{cor:combinatorial hkm} in \fullref{sec:intro}.

\begin{corollary}
\label{cor:combinatorial hkm duplicate}
Suppose that $(-M,-\Gamma)\subset(-M',-\Gamma')$ is a proper inclusion of sutured manifolds with no isolated components and contact gluing map $\Phi_{\xi}$. Given an admissible diagram $\HD_0$ for $(-M,-\Gamma)$, there is a diagram $\HD_2$ for $(-M',-\Gamma')$, a nice diagram $\HD^{nice}$ for $(-M,-\Gamma)$, a nice diagram $\HD_2^{nice}$ for $(-M',-\Gamma)'$, a map $\phi_{\xi}^{nice}:\SFC(\HD^{nice})\to\SFC(\HD_2^{nice})$ of the form $\by\to(\by,\bx_0)$, and graded isomorphisms $f,g$ such that the following diagram commutes:

\begin{center}
\begin{tikzcd}
\SFH(\HD_0) \arrow[r, "f" ] \arrow[d, "\Phi_{\xi}"]
&\SFH(\HD^{nice})\arrow[d, " \phi_\xi^{nice} "]\\
\SFH(\HD_2) \arrow[r, "g" ]
& \SFH(\HD_2^{nice})
\end{tikzcd}
\end{center}
\end{corollary}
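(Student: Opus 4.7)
The plan is to realize $\Phi_\xi$ as an explicit chain-level inclusion via diagrammatic handle attachments, and then apply a contact-compatible version of the Sarkar--Wang nice-diagram algorithm in the spirit of Plamenevskaya \cite{Pla07} to the domain and codomain simultaneously.

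First I would choose a contact handle decomposition of $\xi$ on $M'\setminus\INT(M)$ (available since the inclusion has no isolated components, by \fullref{rmk:isolated components}). Performing the associated sequence of diagrammatic contact handle attachments on $\HD_0$, as in \fullref{def:simple maps}, produces an admissible diagram $\HD_2$ for $(-M',-\Gamma')$ whose Heegaard surface $\Sigma_2$ contains that of $\HD_0$ as a subsurface, and whose curves are those of $\HD_0$ together with the arcs $\alpha_0,\beta_0$ added by each $2$-handle. The composition of the associated diagrammatic maps is the chain-level inclusion
\[
\psi:\SFC(\HD_0)\to\SFC(\HD_2),\qquad \by\mapsto(\by,\bx_0),
\]
where $\bx_0$ is the tuple of preferred intersection points from the $2$-handle attachments. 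By \fullref{lem:padded handles decomposition}, \fullref{cor:simple maps duplicate}, and the composition law for the $\HKM$ map, $\psi_*$ equals $\Phi_\xi$ up to graded isomorphism.

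Next I would run Plamenevskaya's variant of the Sarkar--Wang algorithm on $\HD_2$. Her key observation in \cite{Pla07} is that the finger moves used to nicify a Heegaard diagram can always be chosen to be supported away from a designated tuple of intersection points. In our setting each preferred intersection $x_0\in\bx_0$ sits inside a small bigonal region bounded by $\alpha_0$ and $\beta_0$, and after a slight initial perturbation this region is already nice. I would therefore perform SW finger moves on $\HD_2$ chosen to avoid a neighborhood of $\bx_0$ as well as the attaching arcs of the added $1$-handle strips, so that the restriction of the ambient isotopy to $\Sigma_0\subset\Sigma_2$ is itself a valid SW algorithm. This produces simultaneously a nice diagram $\HD^{nice}$ for $(-M,-\Gamma)$ and a nice diagram $\HD_2^{nice}$ for $(-M',-\Gamma')$, related by the same pattern of diagrammatic handle attachments, together with graded isomorphisms $f:\SFH(\HD_0)\to\SFH(\HD^{nice})$ and $g:\SFH(\HD_2)\to\SFH(\HD_2^{nice})$ induced by the SW isotopies. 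In this configuration the map $\phi_\xi^{nice}:\SFC(\HD^{nice})\to\SFC(\HD_2^{nice})$, $\by\mapsto(\by,\bx_0)$, is well-defined, and commutativity of the square follows because the SW isotopies on $\Sigma_0$ and on the complementary strips have disjoint support and hence commute with $\psi$ at the chain level.

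The hard part will be verifying that Plamenevskaya's argument carries over cleanly from the closed open-book setting to the present sutured partial-open-book setting and, more importantly, that the SW algorithm can be performed compatibly on a diagram and its diagrammatic extension. This amounts to a careful case analysis: after each diagrammatic handle attachment one must show that any non-nice region either lies entirely inside $\Sigma_0$ (where it can be handled by the standard algorithm) or entirely inside the added strips (where the local model is explicit and can be nicified by finger moves that never cross $\bx_0$ or exit the strip). Granting this adaptation, the statement follows by assembling $f$, $g$, $\psi$, and $\phi_\xi^{nice}$ as above.
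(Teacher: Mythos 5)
Your first half matches the paper: factoring $\Phi_{\xi}$ through diagrammatic $1$- and $2$-handle attachments so that the chain-level map is $\by\mapsto(\by,\bx_0)$, using \fullref{lem:padded handles decomposition}, \fullref{cor:simple maps duplicate}, and the $\HKM$ composition law, is exactly how the paper begins. The gap is in the nicification step, and it is precisely the part you defer. First, the Sarkar--Wang algorithm is not carried out by isotopies alone: when a finger move would have to wind around a full $\beta$-curve, a handleslide is forced, and Plamenevskaya's trick for excluding these does not carry over to an arbitrary admissible sutured diagram. The paper only rules out handleslides over the \emph{new} curves $\beta_0$ coming from the $2$-handles (by showing a finger entering a $2$-handle strip either avoids $\alpha_0$ or terminates against it and can be pushed out, \fullref{fig:plam2}); handleslides over the old $\beta$-curves can still occur, and one must check by an explicit local triangle count in the strip (\fullref{fig:plam1}) that the resulting handleslide and isotopy maps still carry generators $(\by,\bx_0)$ to generators $(\bz,\bx_0^{nice})$. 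Your proposal, which treats $f$ and $g$ as ``induced by the SW isotopies,'' never confronts this.

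Second, the ``disjoint support'' argument for commutativity does not work as stated: the curves $\alpha_0,\beta_0$ are not confined to the strips --- they are projections of the attaching arcs $\delta\cap R_\pm$ and traverse the old surface $\Sigma_0$, subdividing its regions (also, $\alpha_0$ and $\beta_0$ meet in a single point, so $x_0$ does not sit in a bigon between them). Consequently the finger moves needed to nicify $\HD_2$ generically cross $\alpha_0$ inside $\Sigma_0$, your dichotomy ``every bad region lies entirely in $\Sigma_0$ or entirely in a strip'' fails (regions straddle the feet of the strips), and $f,g$ are holomorphic-triangle maps rather than restrictions of an ambient isotopy, so commutativity of the square is not formal. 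The paper instead obtains it by restricting the triangle count for $\Psi_{\HD_2,\HD_2^{nice}}$ to the subspace spanned by tuples $(\by,\bx_0)$ and identifying this restricted count with the full triangle count for the smaller diagrams (after deleting the handle curves, which border suture regions), yielding $\psi_{\HD_2,\HD_2^{nice}}(\by,\bx_0)=(\psi_{\HD_0,\HD^{nice}}(\by),\bx_0^{nice})$. Until you supply arguments of this kind for both the handleslides and the compatibility of the triangle maps with $\by\mapsto(\by,\bx_0)$, the proof is incomplete.
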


Some discussion is in order before we begin the proof. By \fullref{cor:simple maps}, the $\HKM$ map can be factored into diagrammatic maps $\Phi_{\xi} = (\sigma_{i_n}\circ\ldots \sigma_{i_1})_*$ up to graded isomorphism. Let $\HD$ be a diagram for $(-M,-\Gamma)$ and $\HD'$ a diagram for $(-M',-\Gamma')$ obtained by diagrammatic handle attachments. The composition $\sigma_{i_n}\circ\ldots\circ \sigma_{i_1}$ takes the form $\phi_{\xi}(\by) = (\by,\bx_0)$, where $\bx_0$ is the collection of preferred intersections of the 2-handles. We wish to show that this situation can be realized by nice diagrams and that the preferred intersections in the nice diagrams correspond to $\bx_0$ up to graded homotopy equivalence.

Note that we cannot just perform diagrammatic handle attachments on a given nice diagram for $(M,\Gamma)$, since a diagrammatic contact 2-handle attachment on a nice diagram need not result in a nice diagram. We will instead find a nice diagram for $(M',\Gamma')$ and work backwards to obtain a suitable diagram for $(M,\Gamma)$.

As in \cite{Pla07}, the difficulty is showing that the Sarkar--Wang algorithm may be applied in a way that preserves $\bx_0$. More precisely, we wish to show there is a diagram which is commutative up to homotopy,

\begin{center}
\begin{tikzcd}
\SFC(\HD) \arrow[r, "\psi" ] \arrow[d, "\phi_{\xi}"]
&\SFC(\HD^{nice})\arrow[d, " {\bz \to (\bz,\bx_0^{nice})} "]\\
\SFC(\HD') \arrow[r, "\psi'" ]
& \SFC((\HD^{nice})')
\end{tikzcd}
\end{center}
where $\bx_0^{nice}$ is the preferred intersection in $(\HD^{nice})'$ and $\psi,\psi'$ are triangle maps induced by Heegaard moves in the Sarkar--Wang algorithm. In \cite{Pla07}, they key idea is to show that one can apply Sarkar--Wang without performing a finger move around a full $\beta$-curve. This implies that no handleslide maps arise, and the preferred intersections are preserved by the maps which arise in the course of the algorithm.

In our setting,  we cannot always rule out finger moves around full $\beta$-curves in general, but we can rule out finger moves around $\beta$-curves coming from contact 2-handle attachments. Recall that attaching a contact 2-handle adds a strip to the Heegaard surface. If a finger move which enters this strip does not cross the corresponding $\alpha$-curve, then it can be isotoped outside the strip. If it crosses the $\alpha$-curve, it must terminate there, since the $\alpha$-curve borders a basepoint region on one side. In this case, we can also isotope the finger move outside the strip; see \fullref{fig:plam2}. This means that we can apply Sarkar--Wang without handlesliding over any of the new $\beta$-curves.

\begin{figure}
\includegraphics[width=1.0\textwidth]{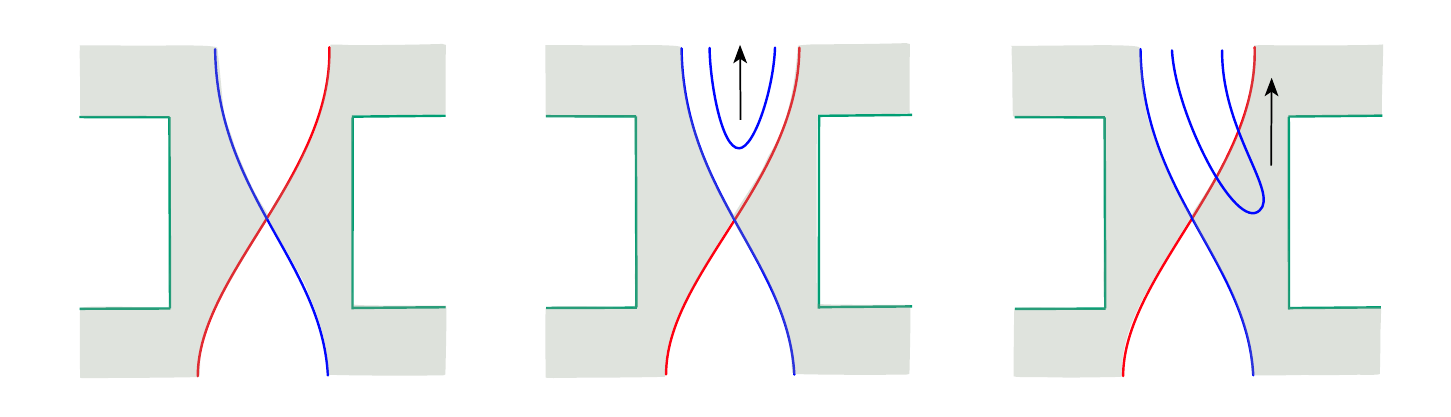}
  
  \caption{Finger moves cannot pass through a strip coming from a 2-handle attachment.}
	\label{fig:plam2}
\end{figure}

This is sufficient for our purposes, since if we perform a handleslide over a $\beta$-curve coming from $\HD$ or a finger move, then the diagram for the triangle count in the strip looks like \fullref{fig:plam1}. It is easy to see that these maps all preserve the preferred intersections.

\begin{figure}
\labellist
	\begin{footnotesize}
  	\pinlabel $x_0'$ at 213 73
  	\pinlabel $x_0$ at 206 47
  	\end{footnotesize}
  	\pinlabel $\Theta$ at 222 24
\endlabellist
\includegraphics[width=1.0\textwidth]{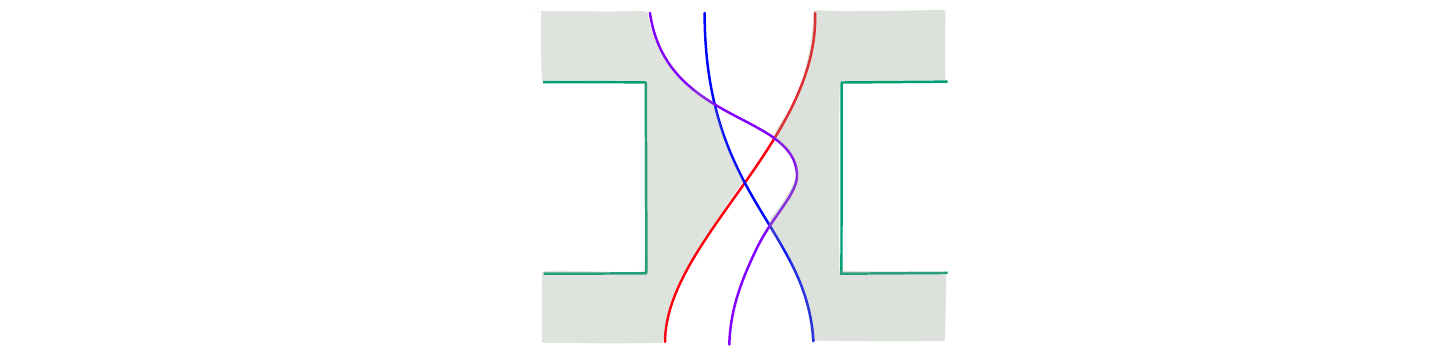}
  
  \caption{A strip coming from a 2-handle attachment in a triple diagram.}
	\label{fig:plam1}
\end{figure}

\begin{proof}
Let $\HD_0 = (\Sigma, \bbeta,\balpha)$ be a diagram for $(-M,-\Gamma)$ and apply the Sarkar--Wang algorithm to obtain a nice diagram $\HD_0^{nice}$. Decompose $(-M',-\Gamma')$ as $(-M,-\Gamma)\cup_j -h^1_j\cup_k -h^2_k$. We attach diagrammatic 1-handles to form a diagram $\HD_1^{nice}$ for $(-M,-\Gamma)\cup_j -h^1_j$ which is nice, since we have only modified the sutured region.

Now, attach diagrammatic 2-handles to $\HD_1^{nice}$ to obtain a diagram $\HD_2 = (\Sigma',\bbeta\cup\bbeta_0,\balpha\cup\balpha_0)$ for $(-M',-\Gamma')$. Apply Sarkar--Wang again to get a nice diagram $\HD_2^{nice} = (\Sigma',\bbeta'\cup\bbeta_0',\balpha\cup\balpha_0)$. As discussed above, we can do this so that all finger moves are performed in the complement of the strips for the contact handles, $\Sigma'\setminus \INT(\Sigma)$. Thus, the triangle map $\psi_{\HD_2,\HD_2^{nice}}$ sends generators of the form $(\by,\bx_0)$ to generators of the form $(\bz,\bx_0^{nice})$.

Now, let $\HD^{nice} = (\Sigma,\bbeta',\balpha)$ be the Heegaard diagram for $(-M,-\Gamma)$ obtained by removing all the diagrammatic handles from $\HD_2^{nice}$. Note that it is nice, since it differs from $\HD_0^{nice}$ by some finger moves which appear in the Sarkar-Wang algorithm. Consider the triple diagram $(\Sigma', \bbeta\cup\bbeta_0,\bbeta'\cup\bbeta_0',\balpha\cup\balpha_0)$ for $\Psi_{\HD_2,\HD_2^{nice}}$; let $\bx_0$ and $\bx_0^{nice}$ be the preferred intersections in $\bbeta_0\cap\balpha_0$ and $\bbeta_0'\cap\balpha_0$ respectively. Since we only care about the image of the contact gluing map, we can restrict the triangle count for $\Psi_{\HD_2,\HD_2^{nice}}$ to the subspace of $\SFC(\HD_2)$ generated by tuples of the form $(\by,\bx_0)$. This restricted count is the same as the full triangle count for $\Psi_{\HD_0,\HD^{nice}}$ arising from $(\Sigma, \bbeta,\bbeta',\balpha)$, so that for a cycle $\by$ in $\SFC(\HD_0)$, we have $\psi_{\HD_2,\HD_2^{nice}}(\by,\bx_0) = (\psi_{\HD_0,\HD^{nice}}(\by),\bx_0^{nice})$. We can define a map $\phi_{\xi}^{nice}:\SFC(\HD^{nice})\to \SFC(\HD_2^{nice})$ by
\[
\phi_{\xi}^{nice}(\psi_{\HD_0,\HD^{nice}}(\by)) = (\psi_{\HD_0,\HD^{nice}}(\by),\bx_0^{nice}),
\]
so that for any cycle $\bz\in\SFC(\HD^{nice})$ we have
\[
(\phi_{\xi}^{nice})_*[\bz] = [(\bz,\bx_0^{nice})].
\]
By construction, $(\phi_{\xi}^{nice})_*$ has the desired form and is equal to the $\HKM$ map $\Phi_{\xi}$ up to graded isomorphism.
\end{proof}

\bibliographystyle{amsalpha}      
\bibliography{Bibliography}

\end{document}